\newcommand{\A}{\mathbb{A}}
\newcommand{\F}{\mathbb{F}}
\newcommand{\Q}{\mathbb{Q}}
\newcommand{\Z}{\mathbb{Z}}
\newcommand{\R}{\mathbb{R}}
\newcommand{\C}{\mathbb{C}}
\newcommand{\G}{\mathbb{G}}
\newcommand{\PP}{\mathbb{P}}
\newcommand{\calA}{{\cal A}}
\newcommand{\calB}{{\cal B}}
\newcommand{\calC}{{\cal C}}
\newcommand{\calE}{{\cal E}}
\newcommand{\calF}{{\cal F}}
\newcommand{\calH}{{\cal H}}
\newcommand{\calI}{{\cal I}}
\newcommand{\calL}{{\cal L}}
\newcommand{\calM}{{\cal M}}
\newcommand{\calO}{{\cal O}}
\newcommand{\calR}{{\cal R}}
\newcommand{\calS}{{\cal S}}
\newcommand{\calT}{{\cal T}}
\newcommand{\calU}{{\cal U}}
\newcommand{\calV}{{\cal V}}
\newcommand{\calW}{{\cal W}}
\newcommand{\calX}{{\cal X}}
\newcommand{\calY}{{\cal Y}}
\newcommand{\calZ}{{\cal Z}}
\def\AA{\mathbf{A}}
\def\BB{\mathbf{B}}
\def\CC{{\bf C}}
\def\EE{\mathbf{E}}
\newcommand{\kk}{\mathbf{k}}
\def\uw{{\underline{w}}}
\def\al{{\alpha}}
\def\be{{\beta}}
\def\de{{\delta}}
\def\De{{\Delta}}
\def\ep{{\epsilon}}
\newcommand{\vep}{{\varepsilon}}
\def\ga{{\gamma}}
\def\Ga{{\Gamma}}
\newcommand{\ka}{{\kappa}}
\def\la{{\lambda}}
\def\La{{\Lambda}}
\def\vpi{{\varpi}}
\def\si{{\sigma}}
\def\Si{{\Sigma}}
\def\th{{\theta}}
\def\vth{{\vartheta}}
\def\Th{{\Theta}}
\def\ups{{\upsilon}}
\def\ze{{\zeta}}
\def\om{{\omega}}
\def\Om{{\Omega}}
\newcommand{\oom}{\underline{\omega}}
\newcommand{\gotA}{\mathfrak A}
\newcommand{\gotB}{\mathfrak B}
\newcommand{\gotC}{\mathfrak C}
\newcommand{\gotH}{\mathfrak H}
\newcommand{\gotF}{\mathfrak F}
\newcommand{\gotm}{\mathfrak m}
\newcommand{\gotn}{\mathfrak n}
\newcommand{\gotoo}{\mathfrak o}
\newcommand{\gotp}{\mathfrak p}
\newcommand{\gotq}{\mathfrak q}
\newcommand{\gotR}{\mathfrak R}
\newcommand{\gots}{\mathfrak s}
\newcommand{\gotS}{\mathfrak S}
\newcommand{\gotg}{\mathfrak g}
\newcommand{\gotl}{\mathfrak l}
\def\can{{\rm can}}
\def\CM{{\rm CM}}
\def\End{{\rm End}}
\def\Gal{{\rm Gal}}
\def\GL{{\rm GL}}
\def\GO{{\rm GO}}
\def\GSp{{\rm GSp}}
\def\Hom{{\rm Hom}}
\newcommand{\Imm}{\mathrm{Im}}
\def\KS{{\rm KS}}
\def\M{{\rm M}}
\newcommand{\orn}{{\rm or}}
\newcommand{\per}{\text{per}}
\newcommand{\perr}{\text{per}_\infty}
\def\SL{{\rm SL}}
\def\SO{{\rm SO}}
\def\Sp{{\rm Sp}}
\newcommand{\Spec}{\mathrm{Spec}}
\newcommand{\Spf}{\mathrm{Spf}}
\def\Sym{\mathrm{Sym}}
\newcommand{\tra}{{\rm tr}}
\def\arH{H^{(\infty)}}
\def\Xar{{{\cal X}^1_\text{ar}}}
\def\XarD{{{\cal X}^\De_\text{ar}}}
\newcommand{\bs}{\backslash}
\newcommand{\derham}[1]{\calH^{#1}_{\mathrm{dR}}}
\newcommand{\dual}[1]{{#1}^\vee}
\newcommand{\mcd}[2]{\mathrm{gcd}(#1,#2)}
\newcommand{\ideles}{{K_{\A}^{\times}}}
\newcommand{\inv}[1]{{#1}^{-1}}
\newcommand{\invol}[1]{{#1}^\dagger}
\newcommand{\iisom}{\buildrel\sim\over\longrightarrow}
\newcommand{\isom}{\buildrel\sim\over\longrightarrow}
\newcommand{\map}{\rightarrow}
\newcommand{\mmap}{\longrightarrow}
\newcommand{\hmap}{\hookrightarrow}
\newcommand{\PFr}{{\rm Pr}_{\rm Frob}}
\def\smallmat#1#2#3#4{
    \left(\sopra{#1}{#3}\sopra{#2}{#4}\right)}
 \def\liminj#1{\mathrel{\mathop{\kern0pt\lim}
               \limits_{\kern3pt\longrightarrow\,#1}}}
 \def\limproj#1{\mathrel{\mathop{\kern0pt\lim}
                \limits_{\kern3pt\leftarrow\,#1}}}
\newcommand{\scal}[2]{\left\langle#1,#2\right\rangle}
\newcommand{\sopra}[2]{\genfrac{}{}{0pt}{1}{#1}{#2}}
\newcommand{\vass}[1]{|#1|}
\newcommand{\vvass}[1]{|\!\vass{#1}\!|}
\newcommand{\vvec}[2]{\left(\sopra{#1}{#2}\right)}
\newtheorem{dfn}{Definition}[section]
\newtheorem{thm}[dfn]{Theorem}
\newtheorem{rem}[dfn]{Remark}
\newtheorem{pro}[dfn]{Proposition}
\newtheorem{lem}[dfn]{Lemma}
\newtheorem{cor}[dfn]{Corollary}
\newtheorem{notat}[dfn]{Notation}
\begin{document} 

\markboth{A. Mori}{Power series expansions of modular forms}

\title{Power series expansions of modular forms\\ and $p$-adic interpolation of the square roots\\
of Rankin-Selberg special values}

\author{Andrea Mori\\ Dipartimento di Matematica\\ Universit\`a di Torino}
\date{}

\maketitle

\begin{abstract}
Let $f$ be a newform of even weight $2\ka$ for $D^\times$, where $D$ is a possibly split indefinite quaternion algebra over $\Q$.
Let $K$ be a quadratic imaginary field splitting $D$ and $p$ an odd prime split in $K$.
We extend our theory of $p$-adic measures attached to the power series expansions of $f$
around the Galois orbit of the CM point corresponding to an embedding $K\hookrightarrow D$ to forms
with any nebentypus and to $p$ dividing the level of $f$. 
For the latter we restrict our considerations to CM points  corresponding to test objects endowed with
an arithmetic $p$-level structure. Also, we restrict these $p$-adic measures to $\Z_p^\times$ and compute the corresponding
Euler factor in the formula for the $p$-adic interpolation of the \lq\lq square roots\rq\rq of the Rankin-Selberg
special values $L(\pi_K\otimes\xi_r,\frac12)$ where $\pi_K$ is the base change to $K$ of the automorphic representation
of $\GL_2$ associated, up to Jacquet-Langland correspondence, to $f$ and $\xi_r$ is a compatible family of
gr\"ossencharacters of $K$ with infinite type $\xi_{r,\infty}(z)=(z/\bar z)^{\ka+r}$.
\end{abstract}

\smallskip {Modular forms; automorphic $L$-functions.}

{Mathematics Subject Classification 2010: 11F67}


\section*{Introduction}

Let $f$ be a holomorphic modular newform of weight $2\ka$, level $N$ and nebentypus $\vep$  for $D^\times$, 
where $D$ is the  possibly split indefinite quaternion algebra of discriminant $\De$ with $(\De, N)=1$.
Let $K$ be a quadratic imaginary field splitting $D$ in which the odd prime $p$ splits, $p\calO_K=\gotp\overline\gotp$.
In  \cite{Mori11} we used the power series expansions of $f$ in terms of the Serre-Tate parameter  
around the Galois conjugates of the CM point $\tau\in\gotH$ associated to an embedding $\rho:K\hookrightarrow D$
to construct, under the hypotheses that $f$ has trivial nebentypus 
and $p$ is prime to the level $N$, a $p$-adic measure on $\Z_p$ whose $r$-th moment squared is strictly related
to the special Rankin-Selberg value $L(\pi_K\otimes\xi_r,\frac12)$.
Here $\pi_K$ is the base change to $K$ of the automorphic representation $\pi$ of $\GL_2$
defined by $f$ up to Jacquet-Langlands correspondence and $\xi_r$ is a certain family of 
gr\"ossencharacters of $K_\A^\times$.

The main goal of this paper is to compute the restriction of this measure to $\Z_p^\times$ or, in other words,
to identify the correct interpolating Euler factor. At the same time, we extend the theory to forms of any nebentypus 
and to primes dividing the level. While the former extension is formal, the latter requires  some adjustement.
Since modular and Shimura curves are not smooth mod $p$ for $p$ dividing the level we consider only 
$p$-ordinary split arithmetic test objects, essentially CM points
corresponding  to elliptic curves, or QM abelian surfaces, defined over the integers in a finite extension of $K_{\gotp}$
endowed with an arithmetic level structure at $p$ (see definition  \ref{th:testpair}).
On these points the effects of the classical and $p$-adic operators $U$ and $V$
coincide. This allows to connect the effect of restricting the measure to $\Z_p^\times$ 
to the variation under the classical $V$ operator of the toric period
$J_r(f,\xi,\tau)=\int_{K_\A^\times/K^\times\R^\times}\phi_r(td_\infty^\tau)\xi(t)\,dt$,
where $\phi_r$ is the adelic lift of $\de_{2\ka}^{(r)}f$ ($\de_{2\ka}^{(r)}$ is the $r$-th iterate, in the automorphic sense, 
of the basic Maass operator), $d_\infty^\tau\in{\rm SL}_2(\R)$ is the standard parabolic matrix such that 
$d_\infty^\tau\cdot i=\tau$ and $\xi$ is a gr\"ossencharacter of $K_\A^\times$ of weight $(-\ka-r,\ka+r)$ that 
has a prescribed reduction over $\widehat\calO_{K,c}^\times$, where $c$ is the conductor relative to $N$ of the embedding $\rho$. 
The toric integrals $J_r(f,\xi,\tau)$ provide the link between the expansions
of $f$ and the special Rankin-Selberg values. On one side, they can be computed in terms of a coupling defined
on the $p$-adic space of $p$-adic forms for $\GL_1(K)$, which happens to be bounded on the closed space of avatars 
of forms of bounded level. On the other hand, their squares are linked to the special
Rankin-Selberg $L$-values, as originally proved by Waldspurger, \cite{Waldsp85}.
In doing so we follow \cite{HaKu91}, where each $J_r(f,\xi,\tau)^2$ is a special case of a generalized Fourier
coefficient for an automorphic form on $R_{K/\Q}\G_m^2$.
Building on results of Shimizu \cite{Shimi72} and refining the techniques of Waldspurger, Harris and 
Kudla use the seesaw identity associated with the theta 
correspondence between the similitude groups $\GL_{2}$ and $\GO(D)$ 
and the splitting $D=K\oplus K^{\perp}$ to express the generalized 
Fourier coefficients of a theta lift  $\theta_{\varphi}(F)$,
where $F\in\pi$ and $\varphi$ is a split primitive Schwartz-Bruhat 
function on $D_{\A}$, as Rankin-Selberg Euler products. 
The theory can be made more explicit 
using the local computations of Watson \cite{Wat03}
(see also \cite{Pra06}) that allow to pinpoint a precise Schwartz-Bruhat function outside a finite set of "bad" primes.
In section \ref{sec:explicit} we deal with some of the local cases that were not in the scope of
\cite{Wat03} and  \cite{Pra06}, in particular when the level is not square-free or when the local component 
of the central character is not trivial.

We now state the main result of the paper. 

\begin{thm}\label{th:main}
    Let $K\subset D$ be an embedding of conductor $c$ relative to $N$ with fixed point $\tau\in\gotH$
    corresponding to a $p$-ordinary split arithmetic test object, $y={\rm Im}(\tau)$. 
    Assume 
    $(p,c)=1$ and $\calO_{K,c}^\times=\{\pm1\}$.
    For $r\geq0$ let $\chi_r$ be a gr\"ossencharacter  of $K_\A^\times$ of infinity type $\chi_{r,\infty}(z)=z^{2(\ka+r)}$
    compatible with the central character $\vep$
    whose $p$-adic avatar is the $r$-th moment of a measure on $\Z_p$ with values in the $p$-adic space
    $\gotF(c^*,F)$ for a sufficiently large local extension $K_\gotp\subset F$. 
    Let $\xi_r=\chi_r\vvass{\text{N}_{K/\Q}}^{-\ka-r}$.
    
    Then there exists a $p$-adic measure $\mu$ on $\CC_c^\sharp\times\Z_p^\times$ such that
   \begin{multline*}
    \frac1{\Om_p^{4(\ka+r)}}\left(\int_{\CC_c^\sharp\times\Z_p^\times}\psi(s)t^r\,d\mu(s,t)\right)^2=
    \frac{\ups}{({\mathrm{vol}}(\hat\calO_{K,c}^\times)h_c^\sharp)^2}\frac{\pi^{2\ka+r-3}}{(y\Om_\infty^2)^{2(\ka+r)}}\times\\
    E_\gotp(\psi\xi_r)^2\La(\varphi^\sharp_r,\psi\xi_r)
    L(\pi_K\otimes\psi\xi_r,\frac12)L(\eta_K,1)^{-1}
    \end{multline*}
    where
    \begin{enumerate}
       \item $\ups$ is a normalizing ratio of Petersson norms, $\Om_\infty$ and $\Om_p$ are archimedean and 
                 $p$-adic periods respectively;
       \item $E_\gotp(\psi\xi_r)=(1-a_p(\psi\xi_r)_\gotp(\vpi_\gotp)p^{-2\ka}+\vep(p)(\psi\xi_r)_\gotp(\vpi_\gotp)^2p^{-2\ka-1})$,
                 where $a_p$ is the $p$-th Hecke eigenvalue of $f$ and $\vpi_\gotp$ is a local uniformizer in $K_\gotp$
                 well defined up to a unit in the conductor of $\xi_{r,\gotp}$;
       \item the factor $\La(\varphi_r,\psi\xi_r)\in\overline\Q$ is a product of local terms almost all of them equal to $1$:
                it depends on an explicit Schwartz-Bruhat function 
                $\varphi^\sharp_r=\varphi^{\sharp(r,r)}_\infty\otimes\bigotimes_{\ell<\infty}\phi^\prime_\ell$
                which varies with $r$ only in the archimedean component.
    \end{enumerate}
\end{thm}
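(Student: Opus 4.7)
The strategy is to express the moments of $\mu$ as toric periods, apply the Harris-Kudla-Waldspurger seesaw machinery, and then extract the $p$-Euler factor. First, I would build $\mu$ on $\CC_c^\sharp \times \Z_p^\times$ by combining the one-variable $p$-adic measure from \cite{Mori11} (now available with arbitrary nebentypus and for $p$ dividing the level, as set up in the preceding sections) with the characters of $\CC_c^\sharp$ arising from the Galois orbit of $\tau$. After normalisation by $\Om_p^{2(\ka+r)}$, the moments $\int \psi(s) t^r \, d\mu(s,t)$ should coincide with the toric period $J_r(f^\sharp, \psi\xi_r, \tau)$, where $f^\sharp$ denotes a $p$-depletion of $f$.

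The passage from $\Z_p$ to $\Z_p^\times$ on the measure side, realised by the projector $1 - VU$, corresponds on the geometric side to replacing $f$ by $(1 - VU) f$. This is legitimate because on $p$-ordinary split arithmetic test objects (Definition \ref{th:testpair}) the classical and $p$-adic $V$ operators agree; it is precisely here that the restriction to such test objects rather than arbitrary CM points is crucial when $p \mid N$. This depletion will produce the Euler factor $E_\gotp(\psi \xi_r)^2$ in the Rankin-Selberg $L$-function.

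Next, squaring and invoking the seesaw identity of \cite{HaKu91} for the dual pair $(\GL_2, \GO(D))$ with the splitting $D = K \oplus K^\perp$, the squared toric period is identified with a generalised Fourier coefficient of the Shimizu theta lift $\theta_{\varphi^\sharp_r}(F)$ for the Schwartz-Bruhat function $\varphi^\sharp_r = \varphi^{\sharp(r,r)}_\infty \otimes \bigotimes_\ell \phi'_\ell$. A Rankin-Selberg unfolding then exhibits the $L$-value ratio $L(\pi_K \otimes \psi\xi_r, \tfrac12) L(\eta_K, 1)^{-1}$ together with a product of local zeta integrals. Outside a finite set of bad primes these are trivial by \cite{Wat03, Pra06}, and at the bad primes the computations of section \ref{sec:explicit} supply the factor $\La(\varphi^\sharp_r, \psi\xi_r)$. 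The archimedean local factor contributes $(y \Om_\infty^2)^{-2(\ka+r)} \pi^{2\ka+r-3}$, while $\ups$ absorbs the ratio of Petersson norms inherent in Shimizu's formula; the volume and class-number factor arise from the choice of Haar measure on $K_\A^\times/K^\times \R^\times$ in the definition of $J_r$.

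The hardest step will be verifying, when $p \mid N$, that the restriction of $\mu$ to $\Z_p^\times$ really does correspond to the $p$-depletion of $f$ and that the local zeta integral at $p$ produces exactly $E_\gotp(\psi\xi_r)^2$. Since the Shimura curve is not smooth mod $p$ in this case, only the local picture at the arithmetic $p$-level test objects of Definition \ref{th:testpair} is available; one must check that the Serre-Tate style expansion and the operator $V$ pass through correctly to the measure side, and that the resulting Euler factor carries the subtle $\vep(p)(\psi\xi_r)_\gotp(\vpi_\gotp)^2 p^{-2\ka-1}$ contribution from the nebentypus $\vep$. Once this local matching at $p$ is settled, the remainder is a bookkeeping exercise combining the Harris-Kudla factorisation with the explicit computations of section \ref{sec:explicit}.
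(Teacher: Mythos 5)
Your overall strategy is the one the paper follows: build the measure from the coefficients of the Serre--Tate expansion, identify the moments with the toric periods $J_r$, use Theorem \ref{th:mustar} to realise the restriction to $\Z_p^\times$ as the $(1-VU)$-depletion of $f$, then square and apply the Harris--Kudla seesaw. Two points, however, are either missing or misattributed and would stop a careful write-up from closing.

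First, you write down the Shimizu lift as $\theta_{\varphi^\sharp_r}(F)$ with $\varphi^{\sharp}_r = \varphi^{\sharp(r,r)}_\infty \otimes \bigotimes_\ell \varphi'_\ell$ as if this were immediate, but identifying the correct archimedean Schwartz function is precisely the hard technical step of the proof. What one needs is that $\pi(d^\tau_\infty)\phi^{(r)}_{f^\De}\otimes\pi(d^\tau_\infty)\phi^{(r)}_{f^\De}(\vep^{-1}\circ\nu)$ equals a theta lift from a Schwartz function. The iterated Maass operator translates, via the substitution formula \eqref{eq:substform} and the Lie algebra identifications $\gotoo(D)\simeq\gots\gotl_2\times\gots\gotl_2$, into the second-order operator $(X^+)^\prime$ acting on $\varphi^\sharp_\infty$, and the $r$-fold iterate produces not just $\varphi^{\sharp(r,r)}$ but a linear combination $\varphi^{\sharp(r,r)}+\sum_{l<r}a_{r,l}\varphi^{\sharp(l,r)}$ as in \eqref{eq:calctheta}. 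One then has to prove (the lemma in \S\ref{sec:explicit}) that the archimedean local integral $L_\infty(\varphi^{\sharp(l,r)},\xi_{r,\infty},s)$ vanishes for $0\le l<r$, so that only the top term survives and one recovers the stated $\pi^{2\ka+r-1}$ normalisation. Skipping this leaves the formula unsupported.

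Second, your ``hardest step'' paragraph expects the Euler factor $E_\gotp(\psi\xi_r)^2$ to emerge from the local zeta integral at $p$ in the Rankin--Selberg unfolding. That is not where it comes from, and if you tried to extract it there you would run into trouble, because the seesaw and unfolding are applied to the theta lift of the \emph{undepleted} newvector $F$, whose local Whittaker function at $p$ only contributes to the finite product $\La(\varphi^\sharp_r,\psi\xi_r)$. The Euler factor is instead produced \emph{before} the seesaw: the proposition preceding Corollary \ref{eulerfact} shows, via Shimura reciprocity and the identification of the $V$-operator with translation by $\widetilde\varpi_\gotp^{-1}\in K_\A^\times$ along the fibre of $\bar\rho_\A$, that $J_r(Vf^\De,\xi,\tau)=p^{-2\ka}\xi_\gotp(\varpi_\gotp)J_r(f^\De,\xi,\tau)$, and then the Hecke relation $T_p=U+\vep(p)p^{2\ka-1}V$ on the newform gives $J_r((1-VU)f^\De,\xi,\tau)=E_\gotp(\xi)\,J_r(f^\De,\xi,\tau)$. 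Squaring this identity is what yields $E_\gotp(\psi\xi_r)^2$; the $\vep(p)(\psi\xi_r)_\gotp(\varpi_\gotp)^2p^{-2\ka-1}$ term is then an elementary consequence of the Hecke relation, not a subtle local computation at $p$.
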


The compatibility of $\chi_r$, hence $\xi_r$, with the central character, which had been already hinted in the introductory remarks, 
is in the sense of proposition \ref{pr:Jasscpr}: in parti\-cu\-lar it is required
that ${\xi_r}_{|\A^\times}=\vep^{-1}$. One should notice that the compatibility condition
depends strongly on $\rho$, so, in general, changing the embedding results in a $p$-adic interpolation of an entirely
different set of twists.
A standard way to produce a family of gr\"ossencharacters $\xi_r$ as requested is outlined in remark \ref{rem:onchar}.

The link with the couplings discussed in section \ref{se:moremeas}, which has already been alluded to above,
is also a strong clue for the non-vanishing of possibly  many of the special values
$L(\pi_K\otimes\psi\xi_r,\frac12)$ as $\xi_r$ is kept fixed and $\psi$ varies in $\widehat{\CC}_c^\sharp$.
In order to be able to give a quantitative version of this expectation, one needs a tight control of the values
of $\de^r(f)$ at the Galois orbit of the relevant CM point.

The idea that the coefficients of the power series expansions at CM points of a modular form  
(that had been introduced in \cite{Mori94} with a different goal)
could be used to interpolate $p$-adically special values $L(\pi_K\otimes\xi,\frac12)$
arose a long time ago in private conversations with M. Harris and was mentioned for the first time in $\cite{HaTi01}$.
Not long after \cite{Mori11} came out in print and already a good deal of the material for the present paper had been
worked out we became aware of the works of M. Bertolini, H. Darmon and K, Prasanna \cite{BeDaPr13}
and M. Brako\v{c}evi\'c \cite{Bra11} where such $p$-adic interpolations had been obtained. 
The main goal  of \cite{BeDaPr13} is far beyond the mere construction of a $p$-adic $L$-function and the authors
work under less inclusive hypotheses and the techniques used appear substantially different.
On the other hand the main goal of  \cite{Bra11} is to construct a "cuspidal" measure in the spirit
of Katz's classical paper \cite{Katz78}, a goal that turned out to be tightly related to our looking for arithmetic
applications of the power series expansions other than a mere analogue of the classical $q$-expansion principle 
(valid only in the split case).
Although the techniques employed are not exactly  the same and the main result is somewhat different, 
there is a good overlap between the ideas employed in  Brako\v{c}evi\'c's paper  and in here.
While Brako\v{c}evi\'c's main result is more invariant and better fitted for applications
as he constructs a unique measure on the profinite group ${\rm Cl}_K^-(N_{\rm ns}p^\infty)$ (his notation, where
$N_{ns}$ is the non-split part of the level $N$) that interpolates a large deal of twists, our approach makes
an explicit use of the power series expansions at points in the non-split Shimura curves. Since the use of
expansions in the anisotropic case seems interesting per se and to the best of our knowledge unprecedented,
we decided, even after such long time, to complete the present work in its final form.

%

\subsection*{Notations and Conventions.}

Prime numbers are denoted $\ell$ 
(possibly $\ell=\infty$ with the convention that $\Q_\infty=\R$, depending on context). 
When a finite prime number is to be supposed fixed we denote it $p$ and we
fix once for all embeddings $i_\infty\colon\overline{\Q}\map\C$ and 
$i_p\colon\overline{\Q}\map\C_p$.
If $v$ is a  finite place in the number field $L$ with ring of integers  $\calO_{L}$, 
we denote $\calO_{(v)}$ the ring of $v$-integers in $L$, while
$L_{v}$, $\calO_v$ and $k_v$ are the $v$-adic completion of $L$, of $\calO_L$ 
and the residue field respectively.
A quadratic imaginary field is always denoted $K$ and if $c$ a positive integer, 
$\calO_{K,c}=\Z+c\calO_{K}$ is its order of conductor $c$. If  $\gotn\subset\calO_K$
is an ideal denote $\CC_\gotn=\calI_\gotn/P_\gotn=K_\A^\times/K^\times\C^\times U_\gotn$ the group of $\gotn$-ideal classes of $K$
and $h_\gotn=|\CC_\gotn|$.
Also, let $\CC_c^\sharp=\CC_{c\calO_K}^\sharp=K_\A^\times/K^\times\C^\times\widehat{\calO}_{K,c}^\times$
and $h_c^\sharp=|\CC_c^\sharp|$.

Fix an additive character $\psi$ of $\A/\Q$ ($\A=\Q\R\widehat\Z$ is the ring of rational adeles)
asking that $\psi_\infty(x)=e^{2\pi i x}$
and $\psi_\ell$ is trivial on $\Z_\ell$ with $\psi_\ell(x)=e^{2\pi i x}$ for $x\in\Z[\inv\ell]$ and finite $\ell$. 
This determines the Haar measure $dx=\prod_{\ell\leq\infty}dx_\ell$ on $\A$, where the local Haar 
measures $dx_\ell$ are normalized so that the $\psi_\ell$-Fourier transform is autodual. 
On $\A^\times$ we fix the multiplicative Haar measure $d^\times x=\prod_{\ell\leq\infty}d^\times x_\ell$ 
where $d^\times x_\ell=\ze_\ell(1){\vass x_\ell}^{-1}dx_\ell$, so that ${\rm vol}(\Z_\ell^\times)=1$ for finite $\ell$.
For a quaternion algebra $D$  the Haar measure 
$dx=\prod_{\ell\leq\infty}dx_\ell$ on $D_\A$ is fixed requiring that
the local measures $dx_\ell$ are normalized so 
that the Fourier transform with respect to the norm form is autodual. 
On $D(\A)^\times$ we fix the multiplicative measure $d^\times x=\prod_\ell d^\times x_\ell$ where
$d^\times x_\ell=\ze_\ell(1){\vass x_\ell}^{-1}dx_\ell$ on $(D\otimes\Q_\ell)^\times$,
so that ${\rm vol}(\GL_2(\Q_\ell))=\ze_\ell(2)^{-1}$ when $D=\GL_2$.

Given a quadratic space $(V,\scal{\,}{\,})$ of dimension $d$ over $\Q$
denote $\calS_{\A}(V)=\bigotimes_{\ell\leq\infty}\calS_{\ell}$ the adelic 
Schwartz-Bruhat space: for $\ell$ finite, $\calS_{\ell}$ is the space of 
locally constant functions on $V\otimes\Q_\ell$ with compact support 
and $\calS_{\infty}$ is the space of Schwartz functions on 
$V\otimes\R$ which are finite under the natural action of a (fixed) maximal compact subgroup of 
the similitude group $\GO(V)$. The Weil representation $r_\psi$ is the representation of 
$\SL_2(\A)$ on $\calS_{\A}(V)$ which is explicitly described locally at $\ell\leq\infty$ by

\begin{subequations}\label{eq:Weilrep}
\begin{eqnarray*}
r_\psi\left(\begin{array}{cc}1 & b \\0 & 1\end{array}\right)\varphi(x) & = & 
\psi_\ell\left(\frac12\scal{bx}{x}\right)\varphi(x), \\
r_\psi\left(\begin{array}{cc}a & 0 \\ 0 & \inv a\end{array}\right)\varphi(x) & = & 
\chi_V(a)\vass{a}_\ell^{d/2}\varphi(ax) \\
r_\psi\left(\begin{array}{cc}0 & 1 \\ -1 & 0\end{array}\right)\varphi(x) & = & \gamma_V\hat{\varphi}(x)
\end{eqnarray*}
\end{subequations}
where $\ga_V$ is an eighth root of 1 and $\chi_V$ is a quadratic character that are computed 
in our cases of interest in \cite{JaLa70} (see also \cite[\S3.4]{Pra06}), while 
$\hat{\varphi}(x)=\int_{V\otimes\Q_\ell}\varphi(y)\psi_\ell(\scal xy)\,dy$ is the Fourier transform 
computed with respect to a $\scal{\,}{\,}$-self dual Haar measure on $V\otimes\Q_\ell$.

The group $\GL_2^+(\R)$ acts on the upper half-plane $\gotH=\{\tau=x+yi\in\C\,|\,y>0\}$
by linear fractional transformations: $\smallmat abcd\cdot \tau=\frac{a\tau+b}{c\tau+d}$.
The automorphy factor is $j(g,\tau)=c\tau+d$.
Given $\tau=x+yi\in\gotH$ let $d^\tau_\infty=\sqrt{y}\smallmat{1}{x/y}0{1/y}\in\SL_2(\R)$
the upper triangular matrix such that $d^\tau_\infty\cdot i=\tau$.
The space of holomorphic modular forms with respect to the Fuchsian groups of the first kind
$\Ga_\ast^\De(N)$, $\ast\in\{0,1\}$, whose definition is recalled
in section \ref{backMSC}, will be denoted $M_{\ast,k}^\De(N)$. The bigger spaces of 
$\calC^\infty$-modular forms for whom only analiticity is required, mantaining the growth conditions
at the cusps, will be denoted $M_{\ast,k,\infty}^\De(N)$.


\section{Background}

\subsection{Modular and Shimura curves.}\label{backMSC}
Let $D$ be an indefinite quaternion algebra over $\Q$ with reduced norm $\nu$ 
and reduced trace $\tra$. 
For every rational place $\ell$ let $D_\ell=D\otimes\Q_\ell$. Fix an isomorphism
$\Phi_\infty\colon D_\infty\isom\M_2(\R)$ which will be usually left implicit.
Let $\Si_D$ be the finite and even set of places at which $D$ is ramified and let 
$\De=\De_D=\prod_{\ell\in\Si_D}\ell$ be the discriminant of $D$. 
If $\calR\subset D$ is an order and $\ell<\infty$  let $\calR_\ell=\calR\otimes\Z_\ell$.
Fix:
\begin{itemize}
  \item A maximal order $\calR\subset D$, with isomorphisms
           $\Phi_\ell\colon D_\ell\isom\M_2(\Q_\ell)$ for all finite $\ell\notin\Si_D$ such that
           $\Phi_\ell(\calR_{\ell})=\M_2(\Z_\ell)$.
           For every $N$ such that $(N,\De)=1$ let $\calR_N\subset\calR$ 
           be the Eichler order given by 
           the local conditions
           $$
          \Phi_\ell(\calR_{N,\ell})=
          \left(\left\{\left(
          \begin{array}{cc}
	a & b  \\
	c & d
	 \end{array}\right)
          \hbox{$\in\M_2(\Z_\ell)$ such that $c\equiv0\bmod N$}
          \right\}\right)
          $$
          for $\ell\notin\Si_D$, and $\calR_{N,\ell}$ is the unique
          maximal order in $D_\ell$ for $\ell\in\Si_D$.
          It is equipped with homomorphisms ($\ell$-orientations)
          $\orn_\ell^{1},\orn_\ell^{2}\colon\calR_{N}\otimes\F_\ell\map\F_{\ell^2}$ 
          for $\ell\mid\De$, and 
          $\orn_\ell^{1},\orn_\ell^{2}\colon\calR_{N}\otimes\Z/\ell^s\Z\map({\Z/\ell^s\Z})^2$ 
          for  $\ell^s\mid\mid N$. 
          If $D=\M_2(\Q)$ we take $\calR=\M_2(\Z)$.
  \item A positive involution $d\mapsto\invol d=\inv t\bar{d}t$
         with $t\in\calR$ and $t^2=-\De$. Thus $\invol\calR=\calR$ and the skew-symmetric
         bilinear form $B_t(a,b)=\tra(a\bar{b}t)=\tra(at\invol b)$ is
         non-degenerate and $\Z$-valued on $\calR\times\calR$. 
\end{itemize}

\medskip\noindent
Consider the groups
\begin{gather*}
\Ga_0^\De(N)=\calR^{1}_{N}=\left\{
\hbox{$\ga\in\calR_{N}$ such that $\nu(\ga)=1$}
\right\}\quad\text{and}\\
\Ga_1^\De(N)=\left\{\hbox{$\ga\in\calR^{1}_{N}$ such that 
$\orn^{\ep}_{\ell}(\bar\ga)=1$
 for $\ell\mid N, \ep=1,2$}\right\}.
 \end{gather*}
When $\De=1$, $\Ga_0^1(N)$ and $\Ga_1^1(N)$ are the classical 
congruence subgroups $\Ga_0(N)$ and $\Ga_1(N)$
respectively. The groups $\Ga_\ast^\De(N)$ are, 
via $\Phi_\infty$, discrete subgroups of 
$\SL_2(\R)$ acting on $\gotH$.
When $\De>1$ the quotient $X_{\ast}^\De(N)=\Ga_\ast^\De(N)\bs\gotH$ is a 
compact Riemann surface. When $\De=1$
let $X_\ast^1(N)$ be the standard cuspidal 
compactification of $Y_\ast(N)=\Ga_\ast(N)\bs\gotH$.
Each of these curves $X^\De_\ast(N)$  is the set of 
complex points of a proper scheme $\calX^\De_\ast(N)$ smooth over $\Z[1/{N\De}]$ 
which is the solution, for $N$ not too small and $\ast=1$, of a representable moduli 
problem defined over $\Z[1/\De]$.

When $\De=1$, $N>3$ the scheme $\calX_1^1(N)$ is the compactified moduli space 
for the functor $F_1^1(N)\colon\hbox{\bf Schemes}\map\hbox{\bf Sets}$
defined by 
\begin{equation}
F_1^1(N)(S)=
\left\{
\parbox{2.3 in}{
Isomorphism classes of elliptic curves $E_{/S}$ with a $\Ga_1(N)$-structure.
}\right\}
\end{equation}
A $\Ga_1(N)$-structure on an elliptic curve $E_{/S}$ is a section $P\colon S\map E$ 
such that the effective Cartier divisor $\sum_{d\in\Z/N\Z}dP$ is a subgroup scheme of $E$.
Denote $\pi_N\colon\calE_{N}\map\calX_1^1(N)$
the corresponding universal (generalized) elliptic curve.  
For $S=\Spec(\C)$ the fiber corresponding to $z\in\gotH$ is the torus 
$E_z=\C/\Z\oplus\Z z$ with point $P=1/N \bmod\Z$. 
When $\De>1$ and $N>3$, the scheme $\calX_1^\De(N)$ 
represents the functor
$F_1^\De(N)\colon\hbox{\bf $\Z[1/\De]$-Schemes}\map\hbox{\bf Sets}$ 
defined by 
\begin{equation}
F_1^\De(N)(S)=
\left\{
\parbox{3 in}{Isomorphism classes of compatibly principally
                        polarized abelian surfaces $A_{/S}$ with a 
                        ring embedding $\iota:\calR\hmap\End_S(A)$ and
                        a $\Ga_1(N)$-structure} 
\right\}.
\end{equation}
An abelian surface $A_{/S}$ with a ring embedding $\iota:\calR\hmap\End_S(A)$ 
is said to have quaternionic multiplications and is called a 
QM-\emph{abelian surface} (or \emph{false elliptic curve}).
A principal polarization on a QM-abelian surface is 
compatible with the embedding $\iota:\calR\subset\End_S(A)$ if the 
involution $d\mapsto\invol d$ is the Rosati involution.
Given our choice of involution, there exists a unique compatible principal 
polarization on a given QM-abelian surface $(A,\iota)$.
A $\Ga_1(N)$-structure on a QM-abelian surface $A_{/S}$ is the datum of
a section $P:S\map A$ such that the effective Cartier divisor 
$\sum_{d\in\Z/N\Z}dP$ is a rank $N$ subgroup of
$\ker\left(\smallmat1000:A[N]\map A[N]\right)$ where 
$\smallmat1000\in\M_2(\Z/N\Z)\simeq\calR\otimes(\Z/N\Z)$ acts on $A[N]$ via endomorphisms.
Denote $\pi_{\De,N}\colon\calA_{\De,N}\mmap\calX_1^\De(N)$
the corresponding universal QM-abelian surface.
For $S=\Spec(\C)$ the fiber corresponding to $z\in\gotH$ is the torus 
$A_z=D_\infty^z/\calR$ where $D_\infty^z$ denotes the real vector space $D_\infty$ endowed with the 
$\C$-structure defined by the identification 
$\C^2=\Phi_\infty(D_\infty)\left(\sopra{z}{1}\right)$, i.e. 
$A_z=\C^2/\Phi_\infty(\calR)\vvec{z}{1}$. 
The skew-symmetric form
$\scal{\Phi_\infty(a)\left(\sopra{z}{1}\right)}
{\Phi_\infty(b)\left(\sopra{z}{1}\right)}=B_t(a,b)$ 
extended to $\C^{2}$ by 
$\R$-linearity is the unique Riemann form on $A_{z}$ with Rosati 
involution $d\mapsto\invol d$. 

The schemes $\calX_0^\De(N)$ are
constructed from the schemes
$\calX_1^\De(N)$  by taking quotients by the
action of the quotient group $\Ga_1^\De/\Ga_0^\De\simeq\left(\Z/N\Z\right)^\times$
which can be described modularly as $P\mapsto dP$ (diamond operators).
They are the coarse moduli schemes attached to the functor
$F_0^\De(N)$ obtained replacing the  section $P$ in the moduli problem datum
by a closed locally free cyclic subgroup scheme of rank $N$ which
in the case $\De>1$ needs to be a subgroup scheme of 
$\ker\left(\smallmat1000:A[N]\map A[N]\right)$.

When $\ell\mid N$ the structure of the non-smooth fibers $\calX_\ast^\De(N)\otimes\F_\ell$ 
are very similar and have been determined in 
 \cite{KatMaz85} when $\De=1$ and in \cite{Buz97, Cla03} adapting the techniques of 
 Katz and Mazur to the case of QM-abelian surfaces. 
 Write $N=N^\prime\ell^s$ with $(N^\prime,\ell)=1$.
Then
$$
\calX_1^\De(N^\prime\ell^s)\otimes\F_\ell=\coprod_{r=0}^s\calZ_1^\De(r)
$$
where the component $\calZ_1^\De(r)$ has multiplicity 
$\phi(\ell^{s-r})$. The reduced scheme $\calZ_1^\De(r)^\text{red}$ is the
Igusa curve $\calX_1^\De(N^\prime,{\rm Ig}[\ell^r])_{/\F_\ell}$
(cuspidally compactified when $\De=1$), i.e.
the smooth $\F_\ell$-scheme that represents the functor
${\rm Ig}(F_1^\De(N^\prime),\ell^r)$ on $\F_\ell$-schemes defined, when $\De=1$, by
$$
{\rm Ig}(F_1(N^\prime),\ell^r)(S)=
\left\{
\parbox{2.8 in}{Isomorphism classes of elliptic curves $E_{/S}$ with a 
$\Ga_1(N^\prime)$-structure and a choice of a generator
of $\ker({\rm Ver}^r\colon E^{(\ell^r)}\map E)$.} 
\right\}.
$$
and similarly in the $\De>1$ case, where
an Igusa $\ell^r$-structure on a QM-abelian surface $A_{/S}$ 
is a choice of generator of $\smallmat1000\ker({\rm Ver}^r\colon A^{(\ell^r)}\map A)$. 
Moreover, in either case all the components meet transversally at each 
supersingular point.
The description of $\calX_0^\De(N^\prime\ell^s)\otimes\F_\ell$ is completely analogous, 
\cite[\S 13.5.6]{KatMaz85}.

\begin{rem}
\rm  Let $\XarD(N)\hmap\calX_1^\De(N)$ be the smooth open subscheme
obtained by discarding the non-reduced components at the primes $\ell\mid N$.
The above modular description in terms of Igusa curves entails, by Cartier duality, 
that $\Xar(N)$ represents the functor on schemes
$$
F^1_\text{ar}(N)(S)=
\left\{
\parbox{2.8 in}{Isomorphism classes of elliptic curves $E_{/S}$ with an 
                        embedding $\mu_N\hmap E$ of $S$-group schemes} 
\right\}
$$
and, when $\De>1$, $\XarD(N)$ represents the functor on $\Z[1/\De]$-schemes
$$
F_\text{ar}^\De(N)(S)=
\left\{
\parbox{2.5 in}{Isomorphism classes of QM-abelian surfaces $A_{/S}$ with an 
                           $\calR$-equivariant embedding 
                           $\mu_N\times\mu_N\hmap A$ of $S$-group schemes} 
\right\}
$$
These structures are called $N$-level arithmetic structures.
\end{rem}

\medskip\noindent
\subsection{CM points.}\label{se:cmpts}
Let $\Q\subseteq E\subset F$ be fields with $[F:E]=2$, 
$F=E(\al)$ with $\al^2=a\in E$, and $F$ splitting $D$.
An embedding $\rho:F\hmap D\otimes_\Q E$ endows $D\otimes_\Q E$ 
with a $F$-vector space structure: scalar multiplication by 
$\la\in F$ is left multiplication by $\rho(\la)$. 
If $\Gal(F/E)=\{1,\si\}$ let $\rho^\si=\rho\circ\si$. By the Skolem-Noether theorem 
there exists $u\in(D\otimes_\Q E)^\times$, well defined up to a 
$F^\times$-multiple, such that $u\rho(\la)=\rho^\si(\la)u$ 
for all $\la\in F$, and $u^2\in E$. With a slight abuse of notation
a splitting $D\otimes_\Q E=F\oplus Fu$ is obtained,
more intrinsically seen either as the eigenspace decomposition 
under right multiplication by $\rho(F^\times)$ or an orthogonal decomposition
under the norm form.
The projection onto $F$ with kernel $F^\perp=Fu$ is  the idempotent
$e_\rho$ image of 
$\frac{1}{2}\left(1\otimes1+\frac1a\al\otimes\al\right)$ under 
$$
F\otimes_EF\stackrel{\rho\otimes\rho}\mmap
(D\otimes_\Q E)\otimes_E (D^{\rm op}\otimes_\Q E)\simeq{\rm End}_E(D\otimes_\Q E).
$$
An involution $d\mapsto d^\imath$ in $D$ extends by linearity to $D\otimes_\Q E$. 
If $\rho^\imath$ is the embedding $\rho^\imath(\la)=\rho(\la)^\imath$, then
$e_{\rho^\imath}=(e_\rho)^\imath$ and in particular
$(e_\rho)^\imath=e_\rho$ if and only if $\rho(F)^\imath=\rho(F)$ pointwise.

\bigskip
Let $E=\Q$ from now on. The positivity of the involution $d\mapsto\invol d$ implies the existence of a
traceless element $\de\in D$ such that $\de=\invol\de$, so that $F=\Q(\de)\subset D$
is a real quadratic subfield pointwise fixed by the involution and the corresponding 
idempotent $e\in D\otimes_\Q F$ satisfies $\invol e=e$.

\bigskip
Now let $F=K$, a quadratic imaginary field. In 
each $\Gal(K/\Q)$-orbit in $\Hom(K,D)$ there is exactly one embedding
which is normalized in the sense of \cite[(4.4.5)]{ShiRed}.
The normalized embeddings correspond bijectively to a special set 
of points $\tau\in\gotH$. More precisely, there is a bijection
between the following two sets:
\begin{enumerate}
  \item $\Hom^\sharp(K,D)=\left\{\hbox{normalized embeddings $\rho\colon K\hmap D$}\right\}$.
  \item $\CM_{\De,K}=\left\{\hbox{$\tau\in\gotH$ such that $\mathrm{Stab}(\tau)\cap\Phi_\infty(D^\times)
            \simeq K^\times$}\right\}$.
\end{enumerate}
The bijection is $\Ga_0^\De(N)$-equivariant where $\Ga_0^\De(N)$ acts by 
conjugation on the first set and on $\CM_{\De,K}$ via its action on $\gotH$.
Under the correspondence $\rho\leftrightarrow\tau$ 
the complex structure on $D_\infty$ induced by the 
embedding $\rho$ coincides with that of $D_{\infty}^{\tau}$.
In the split case $\CM_{1,K}=K\cap\gotH$. 

\bigskip
The conductor $c_\rho=c_\rho(N)$ relative to the order $\calR_N$
of the embedding $\rho\in\Hom(K,D)$
is the unique $c\in\Z^{>0}$ such that   
$\rho(\calO_{K,c})=\rho(K)\cap\calR_{N}$.
The conductor $c=c_\rho$ of the embedding associated to the point $\tau\in\CM_{\De,K}$
depends only on the point 
$x_\tau=[\tau]\in\Ga_0^\De(N)\bs\gotH\subset X_0^\De(N)$,
which will be called a CM point of type $K$ and conductor $c$. We let
$\CM(\De,N;K,c)$ denote the set of CM points of type $K$ and 
conductor $c$.
When $\mcd c{N\De}=1$ the set $\CM(\De,N;K,c)$ is non-empty if and only if
all primes $\ell\mid\De$ are inert in $K$ and all primes $\ell\mid N$ are split in $K$.

The abelian variety $A_x$ corresponding to $x\in\CM_{\De,K}$ has a large
set of endomorphisms. When $\De=1$, $A$ is a elliptic curve with 
complex multiplications in $K$.
When $\De>1$ the QM-abelian surface $A$  contains the CM-elliptic curve
$E=K\otimes\R/\calO_{K,c_1}$ 
and is in fact isogenous to the product $E\times E$.

\begin{dfn}\label{th:testpair}
   Suppose  $\De>1$. Let $p$ be a prime number, $\mcd p{2\De}=1$. 
   A \emph{$p$-ordinary triple} for $(\De,N,K)$ is a 
   triple $(x, v, e)$, where $x\in\CM(\De,N;K,c)$, 
   $v$ is a finite place dividing $p$ in a finite extension $L\supseteq\Q$
   and $e\in D\otimes F$ is the idempotent associated to a real 
   quadratic subfield $F\subset D$ pointwise fixed by the positive 
   involution, such that
     \begin{enumerate}
       \item $FK\subseteq L$ and $x_\tau\in\calX_1^\De(N)(L)$; 
       \item the QM-abelian surface $A_x$ has ordinary good reduction modulo $v$;
       \item if $w$ is the restriction of $v$ to  $F$ then $e\in\calR\otimes_{\Z}\calO_{(w)}$. 
    \end{enumerate}
    Furthermore, a $p$-ordinary test triple $(x, v, e)$ is  said to be
    \emph{split} if $p$ splits in $F$, and is said to be \emph{arithmetic} if
    $x\in\XarD(N)(\calO_v)$.
\end{dfn}
   
\begin{rem}
   \rm \begin{enumerate}
       \item The ordinarity hypothesis implies that $p$ splits in $K$. We shall denote $\gotp\subset\calO_K$ the
                prime ideal corresponding to the place of $K$ defined by the restriction of $v$, so that $p\calO_K=\gotp\bar\gotp$
                where $\bar\gotp$ is the other ideal of $\calO_K$ of norm $p$.
       \item The third condition is equivalent to $(p,c_F\de_F)=1$, where $c_F$
                 is the conductor of $F\subset D$ relative to $\calR$.
       \item In the split case ($\De=1$) the field $F$ and the idempotent $e$ play no role
                 and only \emph{pairs} $(x,v)$ will be relevant, where $x\in\CM(1,N;\calO_{K,c})$
                 and $v\mid p$ is some place of ordinary good reduction for the elliptic curve $E_x$
                 in the number field $L$. Speaking indifferently of either case,  the term
                 \emph{$p$-ordinary (split, arithmetic) object} shall be used.  
      \end{enumerate}
\end{rem}

\begin{pro}
Let $p$ be a prime number such that $(p,2\De)=1$. 
There exists a real quadratic field $F$ in which $p$ splits and an embedding $F\hookrightarrow D$
of conductor prime to $p$ whose image is pointwise fixed by the involution $\invol{(\cdot)}$.
\end{pro}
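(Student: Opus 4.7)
The plan is to produce a traceless $\delta \in \calR$ anticommuting with $t$ (equivalently $\delta^\dagger = \delta$) whose square is a square in $\Z_p^\times$. Indeed, for such $\delta$ the field $F = \Q(\delta)$ is real quadratic (the positivity of the involution, recalled in the excerpt, forces $\delta^2 > 0$) and is pointwise fixed by $\dagger$; the assumption $\delta^2 \in (\Q_p^\times)^2$ means $p$ splits in $F$; and because $p$ is odd and the discriminant $4\delta^2$ of $\Z[\delta]\subset\calO_F$ is a $p$-adic unit, $\Z_p[\delta]$ is already maximal in $F\otimes\Q_p$, so the conductor of the embedding $\rho:F\hmap D$ relative to $\calR$ is coprime to $p$. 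Setting $V_0 = \{\delta \in D : \tra(\delta) = 0,\ \delta t + t\delta = 0\}$, a $2$-dimensional $\Q$-subspace, and $L = V_0 \cap \calR$, the task reduces to finding $\delta \in L$ whose value under the positive definite quadratic form $Q(\delta) := \delta^2 = -\nu(\delta)$ lies in $(\Z_p^\times)^2$.

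The main obstacle is to show that the lattice $L \otimes \Z_p$ is $\Z_p$-unimodular for $Q$. Choose $\Phi_p:D\otimes\Q_p\iisom\M_2(\Q_p)$ sending $\calR\otimes\Z_p$ onto $\M_2(\Z_p)$, and set $T = \Phi_p(t)$. Under the reduced trace pairing $(X,Y)\mapsto \tra(XY)$, which is unimodular on $\M_2(\Z_p)$, the rational decomposition $\M_2(\Q_p) = \Q_p \oplus \Q_p T \oplus V_0\otimes\Q_p$ is orthogonal: the orthogonality between $\Q_p T$ and $V_0$ uses the identity $XT + TX = \tra(XT)\cdot 1$ valid for all pairs of traceless $2\times 2$ matrices. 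Since $p$ is odd and prime to $\De$, both $2$ and $\tra(T^2) = -2\De$ are units in $\Z_p$, so the explicit projections $X \mapsto \tfrac12\tra(X)$ and $X \mapsto \tfrac{\tra(XT)}{\tra(T^2)}T$ are integral on $\M_2(\Z_p)$. This upgrades the decomposition to a direct sum of $\Z_p$-modules $\M_2(\Z_p) = \Z_p \oplus \Z_p T \oplus L\otimes\Z_p$, and comparing discriminants forces $L\otimes\Z_p$ to be unimodular for the reduced trace. Since $Q$ equals $\tfrac12\tra((\cdot)^2)$ on $V_0$ and $p$ is odd, $L\otimes\Z_p$ is unimodular for $Q$ as well.

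Then the reduction $\overline{Q}$ is a non-degenerate binary quadratic form over $\F_p$, and every such form represents every element of $\F_p^\times$, in particular $1$. Any lift to $L$ of such a representing vector yields $\delta \in L$ with $Q(\delta) \in 1 + p\Z \subset (\Z_p^\times)^2$ by Hensel's lemma. It remains to ensure $\Q(\delta)$ is genuinely a field, i.e.\ that $Q(\delta) \notin (\Q^\times)^2$. When $\De > 1$ this is automatic, since $D$ is a division algebra and $\delta^2 \in (\Q^\times)^2$ with $\tra(\delta) = 0$ would force $\delta = 0$. When $\De = 1$, the admissible $\delta \in L$ with $Q(\delta) \leq X$ grow linearly in $X$ while rational squares $\leq X$ number only $O(\sqrt X)$, so all but a density-zero subset satisfy $Q(\delta) \notin (\Q^\times)^2$, completing the construction.
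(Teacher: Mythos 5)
Your proof is correct, and it takes a genuinely different route from the paper's. The paper reduces to the explicit Hashimoto model of $(D,\calR,t)$ and invokes Dirichlet's theorem on primes in arithmetic progressions to produce an auxiliary prime $q$ with $q\equiv1\bmod 4$, $(q,-\De)_\ell=-1$ exactly for $\ell\mid\De$, and $\bigl(\tfrac{q}{p}\bigr)=1$; then $F=\Q(\sqrt q)$ embeds as $\sqrt q\mapsto j$ with conductor $c_F=1$ on the nose. You instead work intrinsically with an arbitrary $(\calR,t)$: you isolate the two-dimensional subspace $V_0$ of traceless elements fixed by $\invol{(\cdot)}$ (equivalently, the orthogonal complement of $\Q\oplus\Q t$ for the trace form), prove that $L=V_0\cap\calR$ becomes $\Z_p$-unimodular for $Q(\delta)=-\nu(\delta)$ by exhibiting an orthogonal $\Z_p$-splitting of $\M_2(\Z_p)$ for the trace pairing — and this is precisely where $(p,2\De)=1$ enters — and then apply the classification of nondegenerate binary forms over $\F_p$ together with Hensel's lemma. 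The paper's construction is more concrete and gives $c_F=1$; yours avoids Dirichlet's theorem and the reduction to a conjugate of the Hashimoto order, and makes visible the structural reason the proposition holds, at the price of the final density step in the $\De=1$ case. That step, as you phrase it, is slightly imprecise (the relevant count is of lattice points with $Q(\delta)$ a perfect square, not of squares themselves); it is rescued by the standard bound $r_Q(n)=O_\epsilon(n^\epsilon)$ on representation numbers of a positive definite binary form, which gives $\sum_{m^2\leq X} r_Q(m^2)=O(X^{1/2+\epsilon})=o(X)$, while the $\delta$ meeting the congruence condition at $p$ form a union of cosets of $pL$ and hence number $\gg X$. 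With that clarification both arguments establish the proposition.
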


\begin{proof}
Since maximal orders in $D$ are conjugated, it is enough to work with the Hashi\-mo\-to model 
\cite{Hash95} of $(D,\calR,t)$, namely
$$
\left\{
   \begin{aligned}
     D &  =\Q\oplus\Q i\oplus\Q j\oplus\Q ij,\qquad i^2=-\De,\quad j^2=q \\
     \calR &  =\Z\oplus\Z\frac{1+j}2\oplus\Z\frac{i+ij}2\oplus\Z\frac{b\De j+ij}q
   \end{aligned}
\right.
$$
and $t=i$, where $q$ is an auxiliary prime such that $q\equiv1\bmod4$ ($q\equiv5\bmod8$ when 
$2\mid\De$) and $(q,-\De)_\ell=-1$ if and only if $\ell\mid\De$ 
(such prime exists for Dirichlet's theorem of primes in arithmetic sequence),
and $b^2\De\equiv-1\bmod q$.
Imposing further that $\bigl(\frac qp\bigr)=1$, the request is met by  $F=\Q(\sqrt{q})$
embedded as $\sqrt{q}\hookrightarrow j\in\calR$, for which $c_F=1$.
\end{proof}

\subsection{Differential operators.}\label{diffop}
Let $T$ be a scheme, $S$ a smooth $T$-scheme 
and $\pi:\calA\map S$ an abelian scheme 
with $0$-section $e_{0}$ and dual $\calA^t_{/S}$. Let
$\oom=\oom_{\calA/S}=\pi_*\Om^1_{\calA/S}={e_0}^*\Om^1_{\calA/S}$
be the sheaf of translation invariant relative $1$-forms on $\calA$
and $\derham{1}=\derham{1}(\calA_{/S})=\R^1\pi_*(\Om^\bullet_{\calA/S})$
the (first) de Rham sheaf. They are two of the terms in the exact sequence
\begin{equation}
0\mmap\oom\mmap\derham{1}\mmap R^1\pi_*\calO_{\calA}\mmap0.
\label{eq:Hodgeseq}
\end{equation}
of sheaves on $S$ (the Hodge sequence). 
The Kodaira-Spencer map
$$
\KS:\oom\mmap R^1\pi_*(\pi^*\Om^1_{S/T})\simeq
\Om^1_{S/T}\otimes R^1\pi_*\calO_{\calA}.
$$
is the first boundary map in the long exact sequence arising by pushing down
the canonical exact sequence
$0\map\pi^*\Om^1_{S/T}\map\Om^1_{\calA/T}\map\Om^1_{\calA/S}\map 0$. 
By Serre duality, it can be seen as an element  of
$\Hom_{\calO_S}(\oom_{\calA/S}\otimes\oom_{\calA^t/S},\Om^1_{S/T})$
and can be reconstructed from the Gau{\ss}-Manin connection
$\nabla\colon\derham{1}\map\derham{1}\otimes_{\calO_S}\Om^1_{S/T}$
(which is the differential $d_1^{0,q}$ 
in the spectral sequence attached to the filtration
$F^i\Om^\bullet_{\calA/T}=\Imm(\Om^{\bullet-i}_{\calA/T}
\otimes_{\calO_\calA}\pi^*\Om^i_{S/T}\mmap\Om^\bullet_{\calA/T})$,
\cite{KatOda68}) as
\begin{equation}
\oom_{\calA/S}\hmap\derham{1}\stackrel{\nabla}{\mmap}\derham{1}
\otimes\Om^1_{S/T}\mmap\dual{(\oom_{\calA^t/S})}\otimes\Om^1_{S/T}.
\label{eq:KSfromGM}
\end{equation}
When $\calA_{/S}$ is principally polarized,
the Kodaira-Spencer map is a symmetric map
$\KS\colon\Sym^2(\oom)\map\Om^1_{S/T}$ under the identification 
$\calA_{/S}\simeq\calA^t_{/S}$, \cite[\S III.~9]{FalCha90}.

When $T=\Spec(\C)$ and $S$ is a complex variety the Kodaira-Spencer map of a 
family $\calA_{/S}$ can be readily computed, appealing to GAGA principles, 
working in the analytic category as in \cite{Katz76, Harris81}. By \'{e}tale-ness the 
actual computation can be carried out on the pullback of the family
$\calA_{/S}$ on the universal cover of $S$. If $\ze$ and $(\ze_1,\ze_2)$ denote the 
standard complex coordinate in the elliptic curve $E_z=\C/\Z\oplus\Z z$ and the
QM-abelian surface $A_z=\C^2/\Phi_\infty(\calR)\vvec{z}{1}$ respectively,
the following formulae for the Kodaira-Spencer map of the universal families
over $\gotH$ hold:
\begin{subequations}\label{eq:KSuniv}
\begin{eqnarray}
 \KS(d\ze^{\otimes 2}) & = & \frac{1}{2\pi i}dz,
\label{eq:KSuniv1} \\
\KS(d\ze_i\otimes d\ze_j)_{i,j=1,2}
	 & = & \frac1{2\pi i}\left(\begin{array}{cc}1 & 0 \\0 & \De\end{array}\right)\,dz
\label{eq:KSuniv2} 
\end{eqnarray}
\end{subequations}
Formula \eqref{eq:KSuniv1} is just the simplest case of the general formula 
for the universal family of principally polarized abelian varieties of genus 
$g$ (e.g. \cite[\S4.4]{Harris81}). Formula \eqref{eq:KSuniv2} is obtained in 
\cite[Prop. 2.3]{Mori11} in a similar fashion using the Hashimoto model
\cite{Hash95}.

When $D$ is split, the Kodaira-Spencer map attached to the universal 
elliptic curve over the Zariski open subscheme $\calY_\text{ar}(N)$, complement 
of the cusp divisor $C$ in $\Xar(N)$ is an isomorphism
$\KS\colon\oom^{\otimes2}\isom\Om^1_{\calY_\text{ar}(N)}$. It is well known that 
the line bundle $\oom$ extends uniquely at the cusps in the complete 
curve $\calX_1^1(N)$ and the Kodaira-Spencer isomorphism extends to an 
isomorphism
$$
\KS\colon\oom^{\otimes2}\iisom\Om^1_{\Xar(N)}(\log C)
$$
(see \cite{Katz73} and also \cite[\S 10.13]{KatMaz85} where the
extension property is discussed for a general representable
moduli problem). By flat base change
$$
H^0(\Xar(N),\oom^{\otimes k})\otimes_\Z\Z[1/N]\iisom
H^0(\calX_1^1(N)_{/\Z[1/N]},\oom^{\otimes k})
$$
and $f(z)\mapsto f^\ast=f(z)(2\pi i\,d\ze)^{\otimes k}$ defines an identification
$$
M^1_{1,k}(N)\iisom H^0(X_1^1(N),\oom^{\otimes k})\simeq
H^0(\Xar(N),\oom^{\otimes k})\otimes\C
$$
normalized so that for a subring $B\subseteq\C$ 
the modular forms defined over $B$, i.e. the sections in
$H^0(\Xar(N)_{/B},\oom^{\otimes k})$ 
correspond to modular forms with Fourier coefficients in $B$ 
($q$-expansion principle, e.g. \cite{Katz73} \cite[theorem 4.8]{Harris81}).

If $D$ is not split, the Kodaira-Spencer map attached to the universal QM-abelian 
surface over $\XarD(N)$ has a big kernel. Let $p$ be a prime, $(p,\De)=1$, and
$v$ be a place in a number field $L$ dividing $p$.
The algebra $\calR\otimes_\Z\calO_{(v)}$ has a natural  $\calO_{(v)}$-linear action 
on $\oom$ over $\XarD(N)_{/\calO_{(v)}}$. If $e\in\calR\otimes_\Z\calO_{(v)}$ 
is a non-trivial idempotent, the subsheaf $e\oom$ is a line bundle
because for any geometric point $s\in\XarD(N)_{/\calO_{(v)}}$ 
the decomposition 
$$
H^0(A_s,\Om^1_{A_s/k(s)})=
eH^0(A_s,\Om^1_{A_s/k(s)})\oplus(1-e)H^0(A_s,\Om^1_{A_s/k(s)})
$$
is non-trivial. Then, \cite[Theorem 2.5]{Mori11}, the Kodaira-Spencer map restricts 
to an isomorphism of line bundles
$$
\KS\colon e\oom\circ\invol{e}\oom\iisom\Om^1_{\XarD(N)/\calO_{(v)}}.
$$

\begin{notat}
    \rm Denote $\calL$ either the  line bundle $\oom$
    on $\Xar(N)_{/\calO_{(v)}}$ or the line bundle 
    $e\oom$ on $\XarD(N)_{/\calO_{(v)}}$ 
    for some choice of idempotent $e\in\calR_{1}\otimes_\Z\calO_{(v)}$ 
    with $v$ as above and such that $\invol e=e$. In either 
    case the Kodaira-Spencer map gives an isomorphism
    $\KS\colon\calL^{\otimes 2}\stackrel{\sim}{\mmap}\Om^{1}$.
\end{notat}
    
The complex line bundle base change of $\calL^{\otimes k}$ to the Riemann surfaces 
$Y_1(N)$ or $X_1^\De(N)$ can be constructed also by taking the quotient
of the trivial complex line bundle $\calV_k=\gotH\times\C$,
homogeneous for the action $g\cdot(z,\ze)=(g\cdot z, j(g,z)^k\ze)$ of $\GL_2^+(\R)$,
by $\Ga_1^\De(N)$ respectively.
Fix a global constant non-zero section $v_k\in H^0(\gotH,\calV_k)$ and let 
$\varsigma(z)\in H^0(\gotH,\calL)$ such that
$\KS(\varsigma(z)^{\otimes 2})=2\pi i\,dz$
(the section $\varsigma(z)$ is given in \eqref{eq:KSuniv1} in the split case and 
can be computed explicitly in the non-split case, see \cite[Remark 2.6.3]{Mori11}).
The identifications over $\gotH$
\begin{equation}
	\calV_2\iisom\Om^1,
	\ v_2\mapsto 2\pi i\,dz\qquad
	\hbox{and}\qquad
	\calV_{k}\iisom{\calL}^{\otimes k},
	\ v_{k}\mapsto{\varsigma(z)}^{\otimes k}
	\label{eq:Vkbundles}
\end{equation}
descend to isomorphisms of complex line bundles on  $Y_1(N)$ or $X_1^\De(N)$
that preserve holomorphic sections and are compatible 
with tensor products and the Kodaira-Spencer isomorphisms.
Thus, mapping $f(z)\mapsto f^\ast=f(z)\varsigma(z)^{\otimes k}$ 
allows to identify
$$
M^\De_{1,k}(N)\iisom H^0(X_1^\De(N),\calL^{\otimes k})\simeq
H^0(\XarD(N)_{/\calO_{(v)}},\calL^{\otimes k})\otimes\C
$$
(note that there is a sign ambiguity in the $k$ odd case), and say that
the modular form $f$ is defined over a ring $B$ of definition for $\calL$ if
the corresponding section is in 
$H^0(\calX_1^\De(N)_{/B},\calL^{\otimes k})$.

Katz \cite{Katz76} pointed out that in a situation where the Hodge sequence \eqref{eq:Hodgeseq} 
admits a splitting ${\rm Pr}:\derham1\rightarrow\oom$ an operator
$\Th_{k,{\rm Pr}}:\Sym^k(\oom)\mmap\Sym^k(\oom)\otimes\calL$
can be defined as the composition
\begin{equation}
\label{eq:algmaassnsp}
\xymatrix{
\Sym^k(\oom) \ar@{^{(}->}[r]\ar@{-->}[ddrr]_{\Th_{k,{\rm Pr}}} & \Sym^k(\derham1)\ar[r]^-\nabla & 
\Sym^k(\derham1)\otimes\Om^1\ar[d]^{1\otimes\inv{\KS}} \\
 & & \Sym^k(\derham1)\otimes\calL^{\otimes 2} \ar[d]^{\Sym^k({\rm Pr})\otimes1} \\
 & & \Sym^k(\oom)\otimes\calL^{\otimes 2} 
}
\end{equation}
At least two such situations occur 
after a suitable modification of the basis $S=\XarD(N)$ with its universal family.
\begin{itemize}
  \item \textit{Classical (archimedean) Maass operators.} If $T=\Spec(\C)$, 
           consider the associated differentable manifold 
           $X_1^\De(N)^\text{an}$ and denote $\calM_\infty$ the sheaf of 
           $\calC^\infty(X_1^\De(N)^\text{an})$-modules associated to the algebraic sheaf 
           $\calM$. The Hodge decomposition 
           $(\derham1)_\infty=\oom_\infty\oplus\overline{\oom}_\infty$ defines a splitting
           ${\rm Pr}_{Hodge}:(\derham1)_\infty\rightarrow\oom_\infty$ and ultimately an operator
           $$
           \Th_{k,\infty}:\calL_\infty^{\otimes k}\mmap\calL_\infty^{\otimes k+2}
           $$
           (\cite[Prop. 2.8]{Mori11} for the details about the non-split case). 
           The operator $ \Th_{k,\infty}$ is the classical 
           one-dimensional Maass operator
           $$
           \de_k=\frac1{2\pi i}\left(\frac{d}{dz}+\frac{k}{2iy}\right),
           \qquad z=x+yi,
           $$
           in disguise, since under \eqref{eq:Vkbundles}
           there is a commutative diagram
	 $$
	 \xymatrix{
	 (\calV_k)_\infty\ar[r]^\sim\ar[d]_{\widetilde{\de}_k} & \calL_\infty^{\otimes k}\ar[d]^{\Th_{k,{\rm Hodge}}} \\
	 (\calV_{k+2})_\infty\ar[r]^\sim &  \calL_\infty^{\otimes k+2} 
	 }
	 $$
	where $\widetilde{\de}_k(\phi v_k)=\de_{k}(\phi)v_{k+2}$,
	\cite[Prop. 2.9]{Mori11}, i.e.
	\begin{equation}\label{eq:thetainf}
	 \Th_{k,\infty}\left(f(z)\varsigma(z)^{\otimes k}\right)=
	 \de_k(f)(z)\varsigma(z)^{\otimes k+2}.
	\end{equation}
           
  \item \textit{$p$-adic Maass operators.}
            Let $B$ be a $p$-adic algebra, $(p,\De)=1$, such that 
            $e$ is defined over $B$. Let $\calO^{[p]}$ be the structure sheaf
            of the (possibly formal) smooth scheme 
            $\calX^{[p]}=\varprojlim_n(\XarD(N)_{/B/p^nB})^{\text{$p$-ord}}$
            obtained taking out at the finite steps the non-ordinary points in characteristic $p$.
            Note that if $p\mid N$ then $\calX^{[p]}=\XarD(N)_{/B}$ since $\XarD(N)$ does not 
            contain non-ordinary points in this case.
            Denote $\calM^{[p]}$ the tensorization with $\calO^{[p]}$ of the restriction 
            to $\calX^{[p]}$ of a sheaf $\calM$ on $\calX$.
            
            The Dwork-Katz construction \cite[\S A2.3]{Katz73} of the unique 
            Frobenius-stable $\nabla$-horizontal submodule 
            $\calU\subset\derham{1}\otimes B$ can be carried out in the non split case 
            as well. It defines a splitting $(\derham{1})^{[p]}=\oom^{[p]}\oplus\calU$ 
            with projection
            $\PFr\colon(\derham{1})^{[p]}\map\oom^{[p]}$
            and ultimately an operator
            $$
            \Th_{k,p}:({\calL}^{\otimes k})^{[p]}\mmap({\calL}^{\otimes k+2})^{[p]}
            $$
            (see \cite{Mori11} for the details about the non-split case)
\end{itemize}
The operators $\Th_{k,\ast}$ can be iterated by increasing $k$: for all $r\geq1$ let
$\Th_{k,\ast}^{(r)}=\Th_{k+2r-2,\ast}\circ\cdots\circ\Th_{k,\ast}$.
The operators $\Th_{k,\ast}^{(r)}$ are algebraic over the CM locus in the 
following sense. Let $x\in\CM(\De,N,K,c)$ in some arithmetic $p$-ordinary test 
object, with corresponding abelian variety $A=A_x$. The choice of an invariant 
form $\om_o$ on  $A_x$ which generates either $H^0(A,\Om^1_{A/\calO_{(v)}})$
in the split case 
or $eH^0(A,\Om^1_{A/\calO_{(v)}})$ in the non-split case
identifies the algebraic   fiber ${\calL}(x)=x^\ast\calL$ with a copy of $\calO_{(v)}$. 
Then, for all $r\geq1$ the operator $\Th_{k,\ast}^{(r)}$ define a map
\begin{equation}
\label{eq:nerondiff}
\Th_{k,\ast}^{(r)}(x)\colon
H^0(\XarD(N)_{/\calO_{(v)}},{\calL}^{\otimes k})\mmap{\calL}^{\otimes k+2r}(x)
\simeq\calO_{(v)}{\om_o}^{\otimes k+2r},
\end{equation}
and in fact
\begin{equation}
\label{eq:equality}
\Th_{k,\infty}^{(r)}(x)(f^*)=\Th_{k,p}^{(r)}(x)(f^*)
\end{equation}
for a modular form $f$ defined over $\calO_{(v)}$.
As in \cite[theorem~2.4.5]{Katz78} this 
follows from the fact that both the Hodge decomposition of 
$\derham{1}(A_{/\C})$ and the Dwork-Katz decomposition of 
$\derham{1}(A_{/B})$ for a $p$-adic $\calO_{(v)}$-algebra $B$
can be obtained by a suitable tensoring from the eigenspace decomposition 
of the algebraic $\derham{1}(A_{/\calO_{(v)}})$ under the action 
of the complex multiplications in $K$. Also, \eqref{eq:equality} holds for all 
$r\geq1$ because
$\Th_{k,\ast}^{(r)}={\rm Pr}_\ast\left((1\otimes\inv\KS)\nabla\right)^r$,
since the kernels of ${\rm Pr}_\ast$ are $\nabla$-horizontal.
For all $r\geq1$ write
$\de_k^{(r)}=\de_{k+2r-2}\circ\cdots\circ\de_k$.


\section{Expansions and measures}

\subsection{Serre-Tate Theory}
Let $\kk$ be a field of characteristic $p>0$, 
$\La$ a complete local ring with residue field $\kk$,
$\calC_\La$ the category of artinian local
$\La$-algebras with residue field $\kk$ and $\AA_{/\kk}$ an abelian
variety of dimension $g$. Grothendieck has shown that
the local moduli functor
$$
\calM_\AA\colon\calC_\La\map\mbox{\bf Sets},\qquad
\calM_\AA(B)=
\left\{
\sopra{\hbox{Abelian schemes $\calA=\calA_{/B}$}}
{\hbox{such that $\calA\otimes_\La\kk=\AA$}}
\right\}.
$$
is pro-represented by $\La[[t_1,\ldots,t_{g^2}]]$.
The Serre-Tate theory makes precise
the fact that a deformation of $\AA$ to $B\in\calC_\La$ is completely determined 
by a deformation of  its formal group.
When $\kk$ is algebraically closed, $\La=\calW$ is a finite extension 
of the ring $W_\kk$ of Witt vectors and $\AA$ is
\emph{ordinary}, all of which shall be assumed henceforth,
there is a canonical isomorphism of functors
$$
\calM_\AA\iisom\Hom(T_p\AA\otimes T_p\AA^t,\widehat{\G}_m),
$$
which  endows $\calM_\AA=\Spf(\gotR^u)$ with a canonical
structure of formal torus and identifies its group of characters
$X(\calM_\AA)=\Hom_{\Z_p}(\calM_\AA,\widehat{\G}_m)\subset\gotR^u$
with $T_p\AA\otimes T_p\AA^t$. 
For a deformation $\calA_{/B}$ of $\AA$, let
$$
q(\calA_{/B};\cdot,\cdot)\colon
T_p\AA\times T_p\AA^t\mmap\widehat{\G}_m(B)=1+\gotm_B
$$
be the corresponding bilinear form. Briefly, 
$q(\calA_{B};P,Q^t)=\langle\varphi_\calA(P),Q^t\rangle$ 
(essentially a Weil pairing)
where $\varphi_\calA:T_p(\AA)\rightarrow\widehat\calA$
is the limit of the Drinfeld lifts of the multiplication by $p^n$ maps
and realizes the canonical connected-\'etale exact sequence
for the $p$-divisible group of $\calA$ as the pushout of the standard exact 
sequence for $T_p(\AA)$,
$$
\xymatrix{
0\ar[r] & \widehat\calA\ar[r] & \calA[p^\infty]\ar[r] & T_p(\AA)\otimes_{\Z_p}(\Q_p/\Z_p)\ar[r] & 0 \\
0\ar[r] & T_p(\AA)\ar[r]\ar[u]_{\varphi_{\calA}} & T_p(\AA)\otimes_{\Z_p}\Q_p\ar[r]\ar[u] & 
               T_p(\AA)\otimes_{\Z_p}(\Q_p/\Z_p)\ar[r]\ar@{=}[u] & 0
}
$$
Grothendieck's result is thus explicit, in the sense that
if $T_p(\AA)=\bigoplus_{i=0}^g\Z_pP_i$ and $T_p(\AA^t)=\bigoplus_{j=0}^g\Z_pP_j^t$,
and $\calA^u_{/\gotR^u}$ is the universal formal deformation,
the $g^2$ elements
$$
q_{i,j}=q(\calA^u_{/\gotR^u};P_i,P_j^t)-1=
\varprojlim_{B\in\cal C}q(\calA_{/B};P_i,P_j^t)-1
$$
define an isomorphism
$\gotR^u\simeq\calW[[q_{i,j}]]$.
The following two facts (for, and for the details of all of the above as well, 
see \cite{Katz81}) are crucial.
\begin{rem}\label{fa:one}\rm
If $\calA_{/B}$ and $\calB_{/B}$ are formal deformations of the ordinary abelian varieties 
$\AA$ and $\BB$ respectively, a morphism $f\colon\AA\map\BB$ lifts to a morphism
$f_B\colon\calA\map\calB$ if and only if
\begin{equation}
\label{eq:lift}
q(\calA_{/B};P,f^t(Q^t))=q(\calB_{/B};f(P),Q^t)\quad
\mbox{for all $P\in T_p\AA, Q^t\in T_p\BB^t$}.
\end{equation}
In particular, if $\AA$ is principally polarized, under the  identification $T_p\AA\simeq T_p\AA^t$ 
the formal subscheme $\calM^{\rm pp}_\AA$ that classifies deformations of $\AA$ with a lifting of 
the principal polarization is a subtorus with group of characters $X(\calM^{\rm pp}_\AA)=\Sym^2(T_p\AA)$.
\end{rem}

\begin{rem}\label{fa:two}\rm
The Weil pairing yields an isomorphism $T_p\AA^t\simeq\Hom_B({\widehat{\calA}},\widehat{\G}_m)$, hence
a functorial $\Z_p$-linear homomorphism $\om_\calA\colon T_p\AA^t\map\oom_{\calA/B}$,
where $\om_\calA(P^t)$ is the pullback of $dt/t$ under the morphism corresponding to $P^t$. 
Thus, if $\AA$ is principal\-ly polarized and $\calA_{/B}$ is a 
principally polarized deformation
$$
\om_\calA(P)\cdot\KS(\om_\calA(Q))=d\log q(\calA_{/B};P,Q)\quad
\mbox{for all $P,Q\in T_p\AA$}
$$
under the usual identifications.
\end{rem}

By ordinarity, $\AA[p]=\widehat\AA[p]\times(T_p(\AA)\otimes_{\Z_p}\F_p)$.
The two factors are mutually Cartier dual finite group schemes, each of them giving 
rise to a quotient isogeny of degree $p^g$.
The connected factor is the kernel of the completely inseparable $\kk$-linear 
Frobenius morphism $F_\kk:\AA\map\AA^{(p)}$.
Denoting $\AA^{(1/p)}$ the quotient of $\AA$ by the \'etale factor, the separable isogeny
$G\colon\AA\rightarrow\AA^{(1/p)}$ is the Verschiebung for $\AA^{(1/p)}$
because $(\AA^{(1/p)})^{(p)}\simeq\AA$, i.e. $G^{(p)}=V_\kk$.

\begin{pro}\label{teo:qunderfrob}
      Let $\AA$ be endowed with a polarization $\la$ of degree not divisible by $p$. 
      Then there are identifications 
      $T_p(\AA)\simeq T_p(\AA^t)\simeq T_p(\AA^{(p)})\simeq T_p((\AA^{(p)})^t)
      \simeq T_p(\AA^{(1/p)})\simeq T_p((\AA^{(1/p)})^t)$ 
      under which a morphism $\phi:\calA\rightarrow\calB$
      of abelian schemes over $B\in\widehat{\cal C}$ lifts the Frobenius morphism $F_\kk$
      if and only if
      $$
      q(\calB;P,Q)=q(\calA;P,Q)^p\qquad\hbox{for all $P, Q\in T_p(\AA)$},
      $$
      and a morphism $\psi:\calA\rightarrow\calB$
      of abelian schemes over $B\in\widehat{\cal C}$ lifts $G$
      if and only if
      $$
      q(\calB;P,Q)^p=q(\calA;P,Q)\qquad\hbox{for all $P, Q\in T_p(\AA)$},
      $$
\end{pro}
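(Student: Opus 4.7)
The plan is to apply the lifting criterion of Remark~\ref{fa:one}, namely \eqref{eq:lift}, separately to the two isogenies $F_\kk$ and $G$, once their induced actions on Tate modules are understood.

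First I set up identifications. Since $\AA$ is ordinary, $F_\kk:\AA\to\AA^{(p)}$ is an isomorphism on the étale quotient of $\AA[p^\infty]$, so it induces an isomorphism $T_p\AA\isom T_p\AA^{(p)}$. Dually, $F_\kk^t:T_p((\AA^{(p)})^t)\to T_p\AA^t$ is the Verschiebung of $\AA^t$, and under the Frobenius identification $T_p\AA^t\isom T_p((\AA^{(p)})^t)$ the relation $V\circ F=[p]$ shows that $F_\kk^t$ is multiplication by $p$. The situation for $G$ is symmetric: on Tate modules $G$ becomes multiplication by $p$ while $G^t$ is an isomorphism. The hypothesis that $\la$ has degree prime to $p$ makes it an isomorphism on $p$-divisible groups, hence identifies $T_p\AA$ with $T_p\AA^t$; combining this with the Frobenius identifications produces a common identification of all six Tate modules appearing in the statement.

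With this dictionary in hand the substitution into \eqref{eq:lift} is immediate. For a candidate lift $\phi:\calA\to\calB$ of $F_\kk$, the criterion reads $q(\calA;P,F_\kk^t(Q))=q(\calB;F_\kk(P),Q)$, which after our identifications becomes $q(\calA;P,pQ)=q(\calB;P,Q)$. Since $q$ is bilinear with values in the multiplicative group $\widehat{\G}_m(B)=1+\gotm_B$, this translates to $q(\calA;P,Q)^p=q(\calB;P,Q)$, giving the first assertion. Running the same argument with $G$ in place of $F_\kk$ (so that now the isomorphism sits on the codomain side and multiplication by $p$ on the domain side) yields $q(\calA;P,Q)=q(\calB;P,Q)^p$, which is the second. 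Both equivalences are genuine biconditionals because \eqref{eq:lift} is.

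The only step that really requires care is the bookkeeping of the identifications: one must check that the Frobenius isomorphism $T_p\AA^t\isom T_p((\AA^{(p)})^t)$ is compatible with $\la$ in the sense that the polarization of $\AA^{(p)}$ corresponds to $\la^{(p)}$ under $(\AA^{(p)})^t=(\AA^t)^{(p)}$, and analogously for $\AA^{(1/p)}$. This is automatic from functoriality of the relative Frobenius, and is the only nontrivial ingredient beyond the Serre--Tate criterion itself; I expect no other obstacle.
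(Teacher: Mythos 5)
Your proof is correct and takes essentially the same route as the paper's: you establish, via the polarization $\la$ (an isomorphism on $T_p$ since its degree is prime to $p$) together with Frobenius/Verschiebung, a common identification of the six Tate modules under which $F_\kk$ and $G^t$ become isomorphisms while $F_\kk^t$ and $G$ become multiplication by $p$, and then you substitute into the Serre--Tate lifting criterion \eqref{eq:lift}. The paper encapsulates exactly these identifications in a single (explicitly non-commutative) diagram and then says the conclusion follows from \eqref{eq:lift}; you make the final substitution explicit, which is a small stylistic difference rather than a mathematical one.

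One minor caution on your closing remark: the identifications are \emph{not} all mutually compatible --- the paper stresses that its diagram is non-commutative, which is precisely why the dotted arrows ($G$ and $F_\kk^t$) are multiplication by $p$ rather than the identity. What is needed, and what your argument in fact uses, is only that the particular compositions along the continuous arrows (polarizations and the isomorphic leg of each Frobenius/Verschiebung pair) agree, and that the remaining arrow in each square is then forced to be $[p]$ by $V\circ F=[p]$. Your phrase about the Frobenius identification being ``compatible with $\la$'' is defensible if read as the functoriality relation $\la^{(p)}\circ F_\kk = F_{\kk,\AA^t}\circ\la$, but it should not be read as asserting commutativity of the whole diagram.
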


\begin{proof}
      Since $\ker\la$ meets $\AA[p]$ trivially, there are canonically induced polarizations 
      $\la^{(p)}$ on $\AA^{(p)}$ and $\la^{(1/p)}$ on $\AA^{(1/p)}$ such that 
      $\deg(\la)=\deg(\la^{(p)})=\deg(\la^{(1/p)})$.
      Thus, the situation defines a (non-commutative!) diagram of Tate modules
      $$
      \xymatrix{
      T_p(\AA^{(1/p)})\ar[d]_\simeq^{\la^{(1/p)}} & T_p(\AA)\ar[d]_\simeq^{\la}\ar[r] _\sim^{F_\kk}\ar@{.>}[l]_G & 
      T_p(\AA^{(p)})\ar[d]_\simeq^{\la^{(p)}}  \\
      T_p((\AA^\si)^t)\ar[r]^\sim_{G^t} & T_p(\AA^t) & T_p((\AA^{(p)})^t)\ar@{.>}[l]_{F_\kk^t}
      }
      $$
      where the continuous arrows are isomorphisms ($G^t$ is a 
      $\kk$-linear Frobenius, as observed above) and the dotted arrows are, under these identifications,
      multiplication by $p$ maps.
      The result follows directly from \eqref{eq:lift}.
\end{proof}

If $\EE_{/\kk}$ is an elliptic curve, the local moduli functor $\calM^1_\EE=\Spf(\gotR^u)$
is a one-dimensional torus with $\gotR^u=\calW[[q-1]]$ where $q=q(\calE^u_{/\gotR^u};P,P)$ and
$T_p(\EE)=\Z_pP$. Also, $\KS(\om_u^{\otimes 2})=d\log(q)$ 
where $\om_u=\om_{\calE^u}(P)$.

For a QM-abelian surface $\AA_{/\kk}$ a choice of idempotent 
$e\in\calR\otimes\Z_p\simeq\M_2(\Z_p)$ defines a decomposition 
$T_p(\AA)=\Z_pP\oplus\Z_pQ$ of Tate modules where $eP=P$ and $eQ=0$. 
Since QM-abelian surfaces have a canonical principal polarization
it is natural to consider the subfunctor
$$
\calM_\AA^\De(B)=
\left\{
\sopra{\hbox{principally polarized deformations $\calA_{/B}$ of $\AA$ 
with a lift of the}}
{\hbox{endomorphisms given by elements of the
maximal order $\calR$}}
\right\}.
$$
Then, \cite[Proposition 3.3]{Mori11},
\begin{pro}\label{teo:shimuramoduli}
      The subfunctor $\calM_\AA^\De=\Spf(\gotR^u_D)$ is a $1$-dimensional subtorus 
      of $\calM_\AA$. Moreover, if $e\in\calR\otimes\Z_p$ is an idempotent such that 
      $\invol e=e$ and $\{P,Q\}$ is a $\Z_p$-basis of $T_p(\AA)$ such that 
      $eP=P$ and $eQ=0$, then
      \begin{enumerate}
  \item $\Spf(\gotR^u_D)=\calW[[q-1]]$, where 
            $q=\left.q(\calA^u;P,P)\right|_{\calM_\AA^\De}$;
  \item $\KS(\om_u^{\otimes 2})=d\log q$, where 
            $\om_u=\left.\om_{\calA^u}(P)\right|_{\calM_\AA^\De}$.
\end{enumerate}
\end{pro}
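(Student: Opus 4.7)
The plan is to realise $\calM_\AA^\De$ inside $\calM_\AA$ as the intersection of the principally polarized deformation locus $\calM_\AA^{\rm pp}$ with the locus on which the full $\calR$-action lifts, and then read off the defining equations in the character lattice. By Remark \ref{fa:one} the first of these, $\calM_\AA^{\rm pp}$, is already a formal subtorus of $\calM_\AA$ with character group $\Sym^2(T_p\AA) \simeq \Z_p^3$, with coordinates
\[
q_{PP} = q(\calA^u; P, P),\qquad q_{PQ} = q(\calA^u; P, Q),\qquad q_{QQ} = q(\calA^u; Q, Q),
\]
once the principal polarization is used to identify $T_p\AA \simeq T_p\AA^t$.

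Next I impose the lifting of the $\calR$-action. Since $\calR \otimes \Z_p \simeq \M_2(\Z_p)$ it suffices to check the criterion \eqref{eq:lift} on generators. In the basis $\{P,Q\}$ of the statement $e$ acts as $\smallmat{1}{0}{0}{0}$, and the hypothesis $\invol e = e$, combined with the fact that any anti-involution of $\M_2(\Z_p)$ fixing a primitive idempotent becomes the transpose after rescaling $P$ and $Q$, allows me to assume $\invol{(\cdot)}$ is the transpose in this basis. Under the identification $T_p\AA \simeq T_p\AA^t$ the dual of an endomorphism $\alpha$ is its Rosati adjoint $\invol\alpha$, so applying \eqref{eq:lift} to $e$ and to $u = \smallmat{0}{1}{0}{0}$ (whose adjoint is $\invol u = \smallmat{0}{0}{1}{0}$) with all four choices $(P',Q') \in \{P,Q\}^2$ gives, after trivial cancellations, exactly the two nontrivial relations
\[
q(\calA^u; P, Q) = 1, \qquad q(\calA^u; P, P) = q(\calA^u; Q, Q).
\]
These correspond to the characters $PQ$ and $P^2 - Q^2$ in $\Sym^2(T_p\AA)$; quotienting by the sublattice they generate leaves a rank one quotient spanned by the class of $P^2 \equiv Q^2$, so $\calM_\AA^\De$ is cut out in $\calM_\AA^{\rm pp}$ by these two characters and is a one-dimensional formal subtorus. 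Setting $q = q(\calA^u; P, P)\big|_{\calM_\AA^\De}$ proves part (1), namely $\gotR^u_D = \calW[[q-1]]$.

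For part (2), the $\calR$-equivariance of the map $\om_{\calA^u}\colon T_p(\AA^u)^t \to \oom_{\calA^u/\gotR^u}$ and the identity $\invol e = e$ together imply that $\om_u = \om_{\calA^u}(P)$ lies in $e\oom$: the condition $eP=P$ on $T_p\AA$ translates via the polarization into the same condition on $T_p\AA^t$. Applying Remark \ref{fa:two} with both arguments equal to $P$ then yields
\[
\KS(\om_u^{\otimes 2}) = d\log q(\calA^u; P, P) = d\log q,
\]
as claimed. The delicate point of the argument is the step where the Rosati involution is put into transpose form: this is exactly the place where the hypothesis $\invol e = e$ is used, and it is also what guarantees that the only two relations needed to cut $\Sym^2(T_p\AA)$ down to a rank one lattice are the two displayed above, so that the resulting subfunctor is genuinely a one-dimensional subtorus rather than a thicker closed subscheme.
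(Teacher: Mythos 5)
Your plan — realise $\calM_\AA^\De$ as the intersection of $\calM_\AA^{\rm pp}$ with the locus where the $\calR$-action lifts, read off the linear relations in the character lattice $\Sym^2(T_p\AA)$ via the criterion \eqref{eq:lift}, and conclude that the quotient has rank one — is exactly the mechanism the paper uses (see the Remark immediately following the proposition, which records the two defining relations). The treatment of part (2) via Remark~\ref{fa:two} and the observation that $\om_u$ lands in $e\oom$ is also the right route.

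There is, however, a genuine error in the step where you assert that \emph{any anti-involution of $\M_2(\Z_p)$ fixing a primitive idempotent becomes the transpose after rescaling $P$ and $Q$.} This is false. Writing $\invol M = g^{-1}M^T g$ with $g$ forced to be diagonal, say $g = \smallmat{a}{0}{0}{d}$, the rescaling $P\mapsto\alpha P$, $Q\mapsto\beta Q$ with $\alpha,\beta\in\Z_p^\times$ replaces $g$ by $h^Tgh$ with $h=\smallmat{\alpha}{0}{0}{\beta}$, hence replaces the ratio $\rho=d/a$ by $(\beta/\alpha)^2\rho$. The square class of $\rho$ in $\Z_p^\times/(\Z_p^\times)^2$ is therefore an invariant of the situation, and it is not always trivial: since $\invol{(\cdot)}=t^{-1}\bar{(\cdot)}t$ with $\nu(t)=\De$, one checks that the square class of $\rho$ coincides with that of $\De$, which need not be a square in $\Z_p^\times$. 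So you cannot in general normalize the involution to the transpose while keeping $\{P,Q\}$ a $\Z_p$-basis. The correct relation coming from \eqref{eq:lift} applied to $\vep=\smallmat{0}{1}{1}{0}$ (or equivalently to $u=\smallmat{0}{1}{0}{0}$) is $q(\calA;Q,Q)=q(\calA;P,P)^\rho$ rather than $q(\calA;Q,Q)=q(\calA;P,P)$, exactly as the paper's Remark states. The good news is that this repair does not affect your conclusion: since $\rho\in\Z_p^\times$, the characters $PQ$ and $\rho P^2-Q^2$ still generate a rank-two direct summand of $\Sym^2(T_p\AA)\simeq\Z_p^3$, so the kernel is still a $1$-dimensional formal subtorus and $q_{PP}$ still serves as the coordinate. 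You should replace the false normalization by the weaker but correct computation with a unit $\rho$, and then proceed exactly as you did.
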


\begin{rem}
      \rm Let $\vep\in\calR\otimes\Z_p$ such that $\vep P=Q$, $\vep Q = P$,
      and let $\calA\in\calM_\AA^\De(B)$.
      Then, by continuity of the action of $\calR$ on $T_p(\AA)$,
      $$
      \left.q(\calA;P,Q)\right|_{\calM_\AA^\De}=
      \left.q(\calA;eP,Q)\right|_{\calM_\AA^\De}=
      \left.q(\calA;P,eQ)\right|_{\calM_\AA^\De}=1
      $$
      and since $\invol\vep$ acts on $\{P,Q\}$ as $\smallmat0\rho{1/\rho}0$ 
      for some $\rho\in\Z_p^\times$,
      $$
      \left.q(\calA;Q,Q)\right|_{\calM_\AA^\De}=
      \left.q(\calA;\vep P,Q)\right|_{\calM_\AA^\De}=
      \left.q(\calA;P,\invol\vep Q)\right|_{\calM_\AA^\De}=
      \left(\left.q(\calA;P,P)\right|_{\calM_\AA^\De}\right)^\rho.
      $$
      Thus for a morphism $f\colon\AA_1\map\AA_2$ of QM-abelian surfaces 
      to lift to a morphism of QM deformations is enough that 
      $q(\calA_2;f(P_1),P_2)=q(\calA_1;P_1,f^t(P_2))$ where $P_1\in T_p(\AA_1)$ and
      $P_2\in T_p(\AA_2)$ are as in proposition \ref{teo:shimuramoduli}.
\end{rem}

Let $\AA_{/\kk}$ be either an elliptic curve or a QM-abelian surface and 
$P\in T_p(\AA)$ either a $\Z_p$-generator or as in proposition \ref{teo:shimuramoduli} respectively. 
Let $\calA\rightarrow\calB$ a lift of the $\kk$-linear
Frobenius  over $B\in\widehat{\calC}$ and let $q_0=q(\calA;P,P)$.

\begin{pro}\label{teo:bijecofdefs}
    Let $\De\geq1$. The following two sets are in bijection:
    \begin{enumerate}
         \item the formal deformations $\calA^\prime\in\calM_\AA^\De(B)$ 
                   such that $q(\calA^\prime;P,P)\in q_0\mu_p(B)$;
         \item the \'etale subgroups $C$ of $\calB[p]$ of rank $p^{\dim(\AA)}$ which are
                   $\calR$-invariant when $D$ is non split.
    \end{enumerate}
\end{pro}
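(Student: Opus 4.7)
The plan is to exhibit mutually inverse maps between the two sets using the quotient construction in one direction and the Serre--Tate criterion for lifting isogenies in the other.

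\textbf{Forward direction.} Given an étale subgroup $C\subset\calB[p]$ of rank $p^{\dim\AA}$ (and $\calR$-invariant when $\De>1$), set $\calA'=\calB/C$ with the étale quotient isogeny $\pi\colon\calB\twoheadrightarrow\calA'$. Since $\calB$ reduces to $\AA^{(p)}$ (as $\calA\to\calB$ lifts $F_\kk$) and the reduction of $C$ must be the entire étale part of $\AA^{(p)}[p]$ (the unique étale rank-$p^{\dim\AA}$ subgroup), the reduction of $\pi$ is the Verschiebung $V_\kk\colon\AA^{(p)}\to\AA$. Hence $\calA'$ is a deformation of $\AA$; in the QM case, $\calR$-invariance of $C$ makes the $\calR$-action descend, and the principal polarization descends because $[p]_{\calB}$ factors through $\calA'$, placing $\calA'$ in $\calM_\AA^\De(B)$. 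Then $\pi$ is a lift of $V_\kk$, so Proposition \ref{teo:qunderfrob} (with $\calB$ in the role of $\calA$ and $\calA'$ in the role of $\calA^{(1/p)}$) gives $q(\calA';P,P)^p=q(\calB;P,P)=q_0^p$, whence $q(\calA';P,P)\in q_0\mu_p(B)$.

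\textbf{Backward direction.} Conversely, suppose $\calA'\in\calM_\AA^\De(B)$ satisfies $q(\calA';P,P)=\zeta q_0$ with $\zeta\in\mu_p(B)$. Then $q(\calA';P,P)^p=q_0^p=q(\calB;P,P)$, and by the remark following Proposition \ref{teo:shimuramoduli} this single identity forces the full Serre--Tate relation $q(\calA';P',Q')^p=q(\calB;P',Q')$ for all $P',Q'\in T_p(\AA)$ when we restrict to $\calM_\AA^\De$. Proposition \ref{teo:qunderfrob} then produces a (necessarily unique) morphism $\pi\colon\calB\to\calA'$ lifting $V_\kk$; its kernel $C=\ker\pi$ is étale of rank $p^{\dim\AA}$ because étaleness and the degree of $\pi$ are read off from its reduction $V_\kk$, and the lift is automatically $\calR$-equivariant in the QM case because both $\calB$ and $\calA'$ live in $\calM_\AA^\De$.

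\textbf{Bijection.} The two constructions are visibly mutually inverse: the quotient $\calB\to\calB/C$ has kernel $C$, and a lift $\pi$ of $V_\kk$ identifies $\calA'$ with $\calB/\ker\pi$. Uniqueness of the $V_\kk$-lift under the Serre--Tate criterion guarantees that the inverse map is well defined.

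\textbf{Main obstacle.} The routine computation is easy; the delicate point is the non-split case, where one must verify both that $\calR$-invariance of $C$ is exactly what makes $\calA'$ lie in the QM subfunctor $\calM_\AA^\De$ (not just in $\calM_\AA$), and conversely that a Serre--Tate lift of $V_\kk$ between objects of $\calM_\AA^\De$ is automatically $\calR$-equivariant so that its kernel is $\calR$-invariant. The remark following Proposition \ref{teo:shimuramoduli}, together with $\invol e=e$, is the key input that reduces the full $\calR$-equivariance of the lift to the single scalar condition on $q(\cdot;P,P)$.
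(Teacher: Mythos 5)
Your argument is correct and follows essentially the same route as the paper's proof: proposition \ref{teo:qunderfrob} (combined with the remark after proposition \ref{teo:shimuramoduli}, which reduces everything to the single parameter $q(\cdot;P,P)$ on $\calM_\AA^\De$) identifies the first set with the deformations admitting a lift of $V_\kk$ from $\calB$, and the bijection is then the kernel/quotient correspondence along such lifts. The only loose point is your justification of the descent of the principal polarization to $\calB/C$ — the factorization of $[p]_\calB$ through $\calA'$ does not by itself give it; one needs isotropy of $C$ for the Weil pairing, or in the QM case the existence and uniqueness of the compatible principal polarization once the $\calR$-action descends — but this verification is left implicit in the paper's own proof as well.
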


\begin{proof}
An immediate consequence of proposition \ref{teo:qunderfrob} is that up to the identification
$(\AA^{(1/p)})^{(p)}\simeq\AA$ the first set is the set of deformations 
$\calA^\prime\in\calM_\AA^\De$ 
for which there is a lift $\calA^\prime\rightarrow\calB$ of the $\kk$-linear Frobenius for $\AA$ 
and also those for which there is a lift $\calB\rightarrow\calA^\prime$ of 
$V_\kk=G^{(p)}:\AA^{(p)}\rightarrow\AA$. 
On the other hand, if $\psi:\calB\rightarrow\calA^\prime$ lifts $V_\kk$, the subgroup 
$ker(\psi)<\calB[p]$ is in the second set. Finally, if $C$ is in the second set, the quotient $\calB/C$ 
is a deformation of $\AA$ and the quotient map lifts $V_\kk$.
\end{proof}

The bijection can be made explicit as follows. 
The deformation $\calB$ can be recovered from the universal deformation $\calB^u$ of $\AA^{(p)}$
as pullback via the classifying map $\phi_\calB:\calW[[q-1]]\map B$ such that $\phi_\calB(q-1)=q_0^p-1$.
Since $\phi_\calB$ factors through the map $\al:\calW[[q-1]]\map\calW[[q-1]]$ such that $\al(q)=q^p$
there is a diagram of pullbacks
$$
\xymatrix{
\calB\ar[r]\ar[d] & \calB^\prime\ar[r]\ar[d] & \calB^u\ar[d] \\
\Spec(B)\ar[r]\ar@/_1pc/[rr]_{\phi_\calB}& \calM_{\AA^{(p)}}\ar[r]^\al & \calM_{\AA^{(p)}}
}
$$
and a map $\calA^u\map\calB^\prime$ which is a lift of $F_\kk$.
Let $C<\calB[p]$ as in the second set of proposition \ref{teo:bijecofdefs}.
Let $\widetilde C$ an \'etale subgroup of $\calB^u$ such that 
$C=\widetilde C\times_{\phi_\calB}B$
and let $C^\prime=\widetilde C\times_\al\calM_{\AA^{(p)}}$.
Also, let $\calB^\can_{/B}$ be the pullback of $\calB^u$ (or $\calB^\prime$) via the map $q\mapsto1$ 
and $C^\can$ the pullback of $\widetilde C$ (or $C^\prime$). The deformation $\calB^\can$ 
is the canonical lift of $\AA^{(p)}$, characterized by $q(\calB^\can;P,P)=1$ or,
equivalently by the splitting of the connected-\'etale exact sequence of its $p$-divisible group,
i.e. $\calB^\can[p^\infty]=\widehat\calB^\can\times T_p(\AA^{(p)})\otimes(\Q_p/\Z_p)$.
Up to a choice of isomorphism $\widehat\calB^\can[p]=\mu_{p}(B)^{\dim\AA}$
(where for QM-abelian surfaces the factors are chosen so that the action of $e$ is projection 
onto the first) write $C^\can=\langle(\ze,P_1)\rangle$ for some $\ze\in\mu_p(B)$ 
(such that $e\ze=\ze$ when $\dim(\AA)=2$), where $P=\varprojlim P_n$. Since the quotient 
isogeny $\calB^\can\map\calB^\can/C^\can$ induces an isomorphism of formal groups, the
 connected-\'etale exact sequence for $(\calB^\can/C^\can)[p^\infty]$,
 $$
 0\map\widehat{\calB^\can/C^\can}\mmap
 \frac{\widehat\calB^\can\times T_p(\AA^{(p)})\otimes(\Q_p/\Z_p)}{\langle(\ze,P_1)\rangle}
 \mmap
 T_p(\AA)\otimes(\Q_p/\Z_p)\map0,
 $$
arises as pushout for the map $\varphi_{\calB^\can/C^\can}(P)=\inv\zeta$. Thus, 
$$
\frac{q(\calB/C;P,P)}{q(\calA;P,P)}=
\frac{q(\calB^\can/C^\can;P,P)}{q(\calA^\can;P,P)}=\inv\ze
$$
by the connectedness of the local moduli space.

\subsection{Expansions}
Let $(p,M)=1$ and $\calT_\nu=(x_\nu,v_\nu,e)$, $\nu\geq0$, an arithmetic $p$-ordinary (split if $\De>1$) object
for $(\De,Mp^\nu,K)$. The abelian variety $A$ corresponding to $x_\nu$ has a smooth model $A_v$ 
over $\calO_{(v)}$ with $v\mid v_\nu$ equipped with an arithmetic level structure $\eta_{Mp^\nu}$ of level $Mp^\nu$.
Let $\kk=\overline{k}_v$ and $\AA=A_v\otimes\kk$ the reduction of $A$ modulo $v$ base changed to $\kk$.
Let $\calW\subseteq\calW_\nu$ be finite extensions of $W_\kk$ containing
$\calO_v$ and $\calO_{v_\nu}$ respectively. 
By smoothness, the local moduli space $\calM^\De_\AA(\calW_\nu)=\Spf(\gotR^u\otimes\calW_\nu)$ 
is the formal completion of
$\XarD(Mp^\nu)_{/\calW_\nu}$ along $x_\nu$ and there is a canonical isomorphism
$$
\gotR^u\otimes\calW_\nu\simeq\widehat\calO_{\XarD(Mp^\nu)_{/\calW_\nu},x_\nu},
$$ 
limit of the classifying maps $\gotR^u\otimes\calW_\nu\map\calO_{x_\nu}/\gotm_{x_\nu}^n$ 
that correspond to the restriction of the universal
family to $\Spec(\calO_{x_\nu}/\gotm_{x_\nu}^n)$. This identification allows to extend the arithmetic
structure $\eta_{Mp^\nu}$ to $\calA^u_{/\gotR^u\otimes\calW_\nu}$.  
Fix $P\in T_p(\AA)$ as in
proposition \ref{teo:shimuramoduli} and the discussion preceding it and let $q=q(\calA^u;P,P)$ and 
$\oom_u=\oom_{\calA^u}(P)$. Letting $q_0=q(A_v)\in1+\gotp_\calW$, it is also
\begin{equation}
\label{eq:locring}
\widehat\calO_{\XarD(Mp^\nu)_{/\calW_\nu},x_\nu}\simeq\calW_\nu[[q_x]]
\qquad\text{where $q_x=q_0^{-1}q-1$.}
\end{equation}
The parameter $q_x$ is a canonical local formal parameter \lq\lq centered at $q_0$\rq\rq. It is canonical 
in the sense that it is the pullback of the Serre-Tate parameter $q-1$ under the translation by $\inv q_0$ 
in the formal torus $\calM^\De_\AA$.

Let $f\in M_{1,k}^\De(Mp^\nu;\calO_{(v_\nu)})$ a modular form defined over $\calO_{(v_\nu)}$. 
The corresponding section $f^\ast\in H^0(\calM^\De_\AA,\calL^{\otimes k})$ at $x_\nu$
can be written $f^\ast=f_u\om_u^{\otimes k}$ with $f_u\in\gotR^u\otimes\calW_\nu$.
Thus, the identification \eqref{eq:locring} allows to write an expansion
\begin{equation}\label{eq:STexp}
f_u=F_f(q_x)=\sum_{n=0}^\infty a_n(f,x_\nu)q_x^n.
\end{equation}
We shall also write
\begin{equation}\label{eq:STlogexp}
f_u=\Phi_f(Q_x)=\sum_{n=0}^\infty\frac{b_n(f,x_\nu)}{n!}Q_x^n=F_f(e^{Q_x}-1)
\end{equation}
under the formal substitution $Q_x=\log(1+q_x)=q_x-\frac12q_x^2+\frac13q_x^3-\cdots$.

\begin{rem}\rm
It turns out that $b_n(f,x_\nu)\in\calW_\nu$ for all $n\geq0$. 
Moreover, if the coefficients $\ga_{n,i}\in\Z$ 
are defined by the identity $n!\binom Xn=\sum_{i=0}^n\ga_{n,i}X^i$, then
$$
\frac1{n!}\sum_{i=1}^n\ga_{n,i}b_i(f,x_\nu)\in\calW_\nu
\qquad\text{for all $n\geq1$}.
$$ 
\end{rem}

\begin{rem}\label{rem:faspadic}\rm
The choice of $P\in T_p(\AA)$ induces also a trivialization
$$
\varphi^{-1}:\hat{\G}_m^{\dim(A)}=\varinjlim_{j>0}\mu_{p^j}^{\dim(A)}\iisom
\hat{A}
$$
i.e. a compatible sequence of arithmetic structures $\eta_{p^\nu}$ on $A$ which together with a structure 
$\eta_M$ of level $M$ gives rise to a sequence $\calT_j=(x_j,v_j,e)$, $j\geq0$,  
of arithmetic $p$-ordinary (split if $\De>1$) objects for 
$(\De,Mp^j,K)$ which are compatible for the canonical forgetful maps  $\XarD(Mp^{j+1})\map\XarD(Mp^j)$.
The expansion \eqref{eq:STexp} remains the same if the object $\calT_\nu$ is replaced with $\calT_{\nu^\prime}$ 
for all $\nu\leq\nu^\prime$. Let $\calW_\infty=\varinjlim\calW_j$ and let 
$\varphi^u:\widehat{\calA}^u\iisom\widehat\G_m(\gotR^u\otimes\calW_\infty)^{\dim A}$ and $\eta_M^u$
be the unique extension of $\varphi$ and $\eta_M$ to the formal universal deformation respectively.
Then we can write
$$
F_f(q_x)=f(\calA^u,\varphi^u,\eta_M^u)
$$
where $f$ is reinterpreted as a generalized $p$-adic modular function in the sense of Katz
(for the extension of Katz's theory to quaternionic forms, see \cite{Kass99}).
\end{rem}

To a choice of an invariant form $\om_o$ on $A_v$ normalized as in \eqref{eq:nerondiff}
are attached periods as follows.

\begin{itemize}
\item \textit{Archimedian period.} Fix a complex uniformization of $A_{/\C}$, which amounts to
          choosing a suitable $\tau\in\gotH$. For a given $\om\in H^0(A_\tau,\calL(\tau))$
          write $\om=\perr(\om,\tau)\varsigma(\tau)$ with $\perr(\om,\tau)\in\C $ and let
          $$
          \Om_\infty=\perr(\om_o,\tau).
          $$
\item \textit{$p$-adic period.} For $\om\in H^0(A_{/\calW},\calL)$ write
         $\om=\per_p(\om,P)\om_u(P)_{|A}$  with $\per_p(\om,P)\in\calW$ and let
         $$
         \Om_p=\per_p(\om_o,P)\in\calW^\times.
         $$ 
\end{itemize}

In turn, the archimedean period can be used to define numbers
$$
\th_v^{(r)}(f,x)=\th^{(r)}(f,x,\om_o)\in\C
$$
where $\th^{(r)}(f,x_\nu,\om)={\perr(\om,\tau)^{-k-2r}}{\de_k^{(r)}(f)(\tau)}$ for any
$\om\in H^0(A_\tau,\calL(\tau))$.
These numbers depend only on $x$ and not on the choice of a representant $\tau$.
The following result relates the periods to the expansion \eqref{eq:STexp}.

\begin{thm}\label{integcoeff}
Let $f\in M_{1,k}^\De(Mp^\nu;\calO_{(v_\nu)})$ and $x\in\XarD(Mp^\nu)(\calO_{(v_\nu)})$ 
belonging to a $p$-ordinary arithmetic test object (split, if $\De>1$). Then
$$
\th_v^{(r)}(f,x)=\Om_p^{-k-2r}b_r(f,x)\in\calO_{(v_\nu)}\qquad\text{for all $r\geq0$}.
$$
\end{thm}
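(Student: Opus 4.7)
The strategy is to compare the two expressions for $\Th_{k,\ast}^{(r)}(x)(f^*)$ equated by \eqref{eq:equality}, computing the archimedean one using the complex uniformization at $\tau$ and the $p$-adic one using the Serre-Tate expansion at $x_\nu$, then reading off scalars with respect to the algebraic basis $\om_o^{\otimes k+2r}$ of $\calL^{\otimes k+2r}(x)$. Integrality is then automatic from \eqref{eq:nerondiff}, which guarantees that both $\Th_{k,\ast}^{(r)}(x)(f^*)$ actually land in $\calO_{(v_\nu)}\om_o^{\otimes k+2r}$.

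For the $p$-adic side I work over the formal neighborhood $\Spf(\calW_\nu[[q_x]])$ of $x_\nu$. Let $\eta_u$ generate the Dwork-Katz unit-root submodule of $(\derham{1})^{[p]}$, so that $\{\om_u,\eta_u\}$ is a basis adapted to $\PFr$. Using that $\nabla\eta_u=0$, that $\om_u,\eta_u$ are isotropic for the symplectic pairing on $\derham{1}$ with $\langle\om_u,\eta_u\rangle=-1$, and the Kodaira-Spencer identity $\KS(\om_u^{\otimes 2})=d\log q=dQ_x$ from proposition \ref{teo:shimuramoduli}(2), I would deduce
\[
\nabla(\om_u)=\eta_u\otimes dQ_x,
\]
with vanishing $\om_u$-component. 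Chasing the diagram \eqref{eq:algmaassnsp} for a section $s=g(q_x)\om_u^{\otimes k}$ then reduces to $\Th_{k,p}(s)=\partial_{Q_x}(g)\cdot\om_u^{\otimes k+2}$: the $\eta_u$-terms introduced by differentiating $\om_u^{\otimes k}$ are killed by $\Sym^k(\PFr)$, and the vanishing of the diagonal part of $\nabla(\om_u)$ eliminates any lower-order correction. Iterating and evaluating at $q_x=0$ yields, by the definition \eqref{eq:STlogexp} of $b_r$,
\[
\Th_{k,p}^{(r)}(x)(f^*)=b_r(f,x)\,\om_u(P)^{\otimes k+2r}.
\]
The delicate point is precisely the vanishing of the diagonal term in $\nabla(\om_u)$: it is what makes the $p$-adic Maass operator coincide on the formal neighborhood with the clean logarithmic derivative $\partial_{Q_x}$ in the canonical Serre-Tate parameter, with no correction term.

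On the archimedean side, iteration of \eqref{eq:thetainf} gives directly $\Th_{k,\infty}^{(r)}(x)(f^*)=\de_k^{(r)}(f)(\tau)\,\varsigma(\tau)^{\otimes k+2r}$. Substituting $\om_u(P)=\Om_p^{-1}\om_o$ and $\varsigma(\tau)=\Om_\infty^{-1}\om_o$ into both expressions and equating via \eqref{eq:equality} the scalars on the common basis $\om_o^{\otimes k+2r}$, I obtain
\[
\Om_p^{-k-2r}b_r(f,x)=\Om_\infty^{-k-2r}\de_k^{(r)}(f)(\tau)=\th_v^{(r)}(f,x),
\]
which is the asserted identity. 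Since \eqref{eq:nerondiff} places both sides of \eqref{eq:equality} inside $\calO_{(v_\nu)}\om_o^{\otimes k+2r}$, this common scalar lies in $\calO_{(v_\nu)}$, giving the integrality statement.
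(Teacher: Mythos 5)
Your proof follows essentially the same route as the paper: compute $\Th_{k,p}^{(r)}(f^*)(x)$ in the Serre-Tate parameter to get $b_r(f,x)\om_u(P)^{\otimes k+2r}$, use \eqref{eq:thetainf} for the archimedean side, compare via \eqref{eq:equality}, and read off integrality from \eqref{eq:nerondiff}. The only point where you assert rather than justify is $\nabla\eta_u=0$ (equivalently, via flatness of the symplectic pairing, $\PFr(\nabla\om_u)=0$), which is exactly what the paper cites from Katz's Serre-Tate local moduli, theorem~4.3.1; with that reference in place your argument is complete.
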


\begin{proof}
The result is obvious for $r=0$, so that  $r\geq1$ may be assumed. Since $\PFr(\nabla(\om_u))=0$,
a consequence of  \cite[theorem 4.3.1]{Katz81} is that
$\Th_{k,p}(f)=(df_u/dQ_x)\om_u^{\otimes k+2}$. After $r$ iterations and evaluation at $x$,
$$
\Th_{k,p}^{(r)}(f)(x)=
\frac{d^rf_u}{d{Q_x}^r}(x)\om_u^{k+2r}(x)=
\frac{b_r(f,x)}{\Om_p^{k+2r}}\om_o^{k+2r}.
$$
On the other hand, \eqref{eq:thetainf} yields
$$
\Th_{k,\infty}^{(r)}(f)(x)=
\de_k^{(r)}(f)(\tau)s(\tau)^{\otimes k+2r}=
\th_v^{(r)}(f,x)\om_o^{k+2r}.
$$
The result follows from \eqref{eq:equality}.
\end{proof}

By the classical theory of Mahler 
the series $F_f$ in \eqref{eq:STexp}, defines a a $\calW_\nu$-valued measure
$\mu_{f,x_\nu}$ on $\Z_p$ characterized by
$\int_{\Z_p}\binom tr\,d\mu_{f,x_\nu}(t)=a_r(f,x_\nu)$ for all $r\geq0$.
Also, the moments of $\mu_{f,x_\nu}$ can be read off the expansion  \eqref{eq:STlogexp}, namely
$$
m_r(\mu_{f,x_\nu})=\int_{\Z_p}t^r\,d\mu_{f,x_\nu}(t)=b_r(f,x_\nu)=\Om_p^{k+2r}\th^{(r)}_v(f,x_\nu)\quad
\text{for all $r\geq0$}.
$$
\begin{rem}\rm
By theorem \ref{integcoeff} the measure $\mu_{f,x_\nu}$ takes values in the field $L_{v_\nu}(\Om_p)$,
which is well-defined since $\Om_p$ is well-defined as an element of $\calW^\times/\calO_v^\times$.
\end{rem}

Denote $\mu^\times$ the restriction of the measure $\mu$ to $\Z_p^\times$, i.e.
$\int_{\Z_p^\times}\phi\,d\mu^\times=\int_{\Z_p}\phi^\ast\,d\mu$
where $\phi^\ast$ is the function obtained from $\phi$ setting it equal to $0$ on $p\Z_p$.
The following theorem shows how the operation of restricting 
$\mu_{f,x_\nu}$ to $\Z_p^\times$ is strongly related to the action of the $p$-th Hecke operator on $f$.
The operators $U$ and $V$ in the statement denote the classical  operators 
(for which the explicit dependence on $p$ will be dropped).

\begin{thm}\label{th:mustar}
Let $f\in M_{1,k}^\De(Mp^\nu;\calO_{(v_\nu)})$.
Then
$\mu^\times_{f,x_\nu}=\mu_{(1-VU)f,x_{\nu+1}}$.
\end{thm}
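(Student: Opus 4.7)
The plan is to realize the restriction $\mu\mapsto\mu^\times$ as the operator $1-VU$ acting on measures through the Serre-Tate expansions, using the coincidence (noted in the introduction) of the classical and the $p$-adic $U,V$ on arithmetic $p$-ordinary test objects. By Remark \ref{rem:faspadic} the measure $\mu_{f,x_\nu}$ does not change when $f\in M_{1,k}^\De(Mp^\nu;\calO_{(v_\nu)})$ is viewed inside $M_{1,k}^\De(Mp^{\nu+1};\calO_{(v_{\nu+1})})$ at $x_{\nu+1}$, so the statement reduces to the identity of measures on $\Z_p$
$$
\mu_{VUf,x_{\nu+1}}=\mathbf{1}_{p\Z_p}\cdot\mu_{f,x_\nu}.
$$

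The key step is to identify the measure-theoretic counterpart of $V$. Katz's geometric description of the $p$-adic $V$ is $(Vg)(\calA,\varphi_{\nu+1},\eta_M)=g(\calA/C,\varphi'_\nu,\eta_M')$ with $C=\hat\calA[p]$ the canonical subgroup, and the quotient $\calA^u\to\calA^u/C^u$ of the universal formal deformation is a lift of the $\kk$-linear Frobenius. Proposition \ref{teo:qunderfrob} then gives $q(\calA^u/C^u;P,P)=q(\calA^u;P,P)^p$, and once the induced trivializations and level structures on $\calA^u/C^u$ are matched with $\varphi^u_{\nu+1},\eta_M^u$, this translates into the substitution
$$
F_{Vg}(q_x)=F_g\bigl((1+q_x)^p-1\bigr),\qquad\Phi_{Vg}(Q_x)=\Phi_g(pQ_x),
$$
so $b_r(Vg,x_{\nu+1})=p^r b_r(g,x_\nu)$ for every $r\ge 0$. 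Equivalently, $\mu_{Vg,x_{\nu+1}}$ is the pushforward $p_\ast\mu_{g,x_\nu}$ of $\mu_{g,x_\nu}$ under multiplication by $p$.

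The companion formula for $U$ can be derived from $UV=\mathrm{id}$, or via a parallel argument using the second half of Proposition \ref{teo:qunderfrob} which describes lifts of $V_\kk$: on measures, $U\mu=(p^{-1})_\ast(\mathbf{1}_{p\Z_p}\mu)$, whence $\mu_{Uf,x_\nu}=U\mu_{f,x_\nu}$. Composing,
$$
\int\phi\,d(VU\mu)=\int\phi(pt)\,d(U\mu)(t)=\int_{p\Z_p}\phi(t)\,d\mu(t)=\int\phi\,d(\mathbf{1}_{p\Z_p}\mu),
$$
so $VU$ on measures is multiplication by $\mathbf{1}_{p\Z_p}$. Specialising to $\mu=\mu_{f,x_\nu}$ yields $\mu_{VUf,x_{\nu+1}}=\mathbf{1}_{p\Z_p}\mu_{f,x_\nu}$, and subtraction from $\mu_{f,x_{\nu+1}}=\mu_{f,x_\nu}$ closes the argument.

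The principal obstacle will be justifying the substitution rule $F_{Vg}(q_x)=F_g((1+q_x)^p-1)$. One must carefully reconcile the moduli-theoretic description of $V$ with the Serre-Tate normalization $q_x=q_0^{-1}q-1$ (since $\calA^u/C^u$ deforms $\AA^{(p)}$ rather than $\AA$, some bookkeeping of the reduction and of the Frobenius-induced identification of Tate modules is required), and verify that the trivializations and $N$-level arithmetic structures carried to $\calA^u/C^u$ are precisely those featured in Katz's description of the $p$-adic $V$ operator; the corresponding analysis for $U$ in the non-split case also needs to be checked uniformly using Proposition \ref{teo:shimuramoduli}.
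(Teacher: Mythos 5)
The substitution $F_{Vg}(q_x)=F_g((1+q_x)^p-1)$, and hence $b_r(Vg,x_{\nu+1})=p^r b_r(g,x_\nu)$, is incorrect as stated, and this is more than bookkeeping. The Frobenius quotient $\calA^u/C^u$ is a deformation of $\AA^{(p)}$, not of $\AA$, and the CM lift of $\AA^{(p)}$ (namely $A_v/\widehat{A}_v[p]$, which is $A_v^{\si_\gotp}$) has Serre--Tate parameter $q_0^p$, not $q_0$. Under the identifications of Proposition \ref{teo:qunderfrob}, the expansion of $g$ read off from $g(\calA^u/C^u)$ is therefore centered at the Galois conjugate CM point over $\AA^{(p)}$, so that the correct statement is $b_r(Vg,x_{\nu+1})=p^r b_r(g,x_\nu^{\si_\gotp})$, the Frobenius conjugate of what you wrote. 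The analogous twist appears in the $U$-step (now passing back from $\AA^{(p)}$ to $(\AA^{(p)})^{(1/p)}=\AA$), and while the two twists would cancel after composing, your intermediate formulas $\mu_{Vg,x_{\nu+1}}=p_\ast\mu_{g,x_\nu}$ and $\mu_{Uf,x_\nu}=U\mu_{f,x_\nu}$ are both false as written and the cancellation is never established. (Also, deriving the $U$-formula from $UV=\mathrm{id}$ only pins down $U$ on measures supported in $p\Z_p$; it does not, by itself, show $U$ kills the remaining part.)

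The paper avoids all of this by never splitting $VU$. It starts from $F^\times_f(T)=F_f(T)-\frac1p\sum_{\zeta\in\mu_p}F_f(\zeta T+\zeta-1)$, rewrites $\zeta q_x+\zeta-1=q_0^{-1}(\zeta q)-1$, and applies Proposition \ref{teo:bijecofdefs}: the summands index the set $\calF$ of deformations of $\AA$ itself (not of $\AA^{(p)}$) admitting a lift of $F_\kk$ to the fixed $\calA^{u,p}=\calA^u/H^u$, i.e.\ they are in bijection with the \'etale subgroups of $\calA^{u,p}[p]$. The entire sum is thus recognized wholesale as $V_pU_pf(\calA^u,\varphi^u,\eta_M^u)$; the base never leaves $\calM^\De_\AA$, so neither Galois conjugation nor $p$-th roots of the parameter ever arise. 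Your closing remark that $U_p,V_p$ must then be reconciled with the classical $U,V$ via the compatibility of $p$-level structures is correct and corresponds to the paper's final paragraph.
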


\begin{proof}
The general theory says that the formal power series encoding $\mu_{f,x_\nu}^\times$ is
$$
F^\times_f(T)=F_f(T)-\frac1p\sum_{\ze\in\mu_p}F_f(\ze T+\ze-1).
$$
Following remark \ref{rem:faspadic} we read the summands in the right hand side (after evaluation at $T=q_x$)
as values of $f$ at particular trivialized abelian schemes over $\calW_\infty[[q_x]]$. Given a map of 
$\calW_\infty$-algebras $h:\calW_\infty[[q_x]]\map B$ there is an equality
$h(F_f(q_x))=f(\calA^h,\varphi^h,\eta_\nu^h)$
where the triple $(\calA^h,\varphi^h,\eta_M^h)$ is obtained from 
$(\calA^u,\varphi^u,\eta_M^u)$ by pull-back:
\begin{equation}\label{eq:pb}
\xymatrix{
(\widehat{\G}_m\times\mu_{M})^{\dim(A)}{}_{/B}\ar[r]\ar[d]^{(\varphi^h)^{-1}\times\eta_M} & 
(\widehat{\G}_m\times\mu_{M})^{\dim(A)}{}_{/\calW_\infty[[q_x]]}\ar[d]^{(\varphi^u)^{-1}\times\eta_M}\\
  \calA^h\ar[r]\ar[d] & \calA^u\ar[d] \\
 \text{Spf}(B)\ar[r]^{h^*} &  \calM_\AA^\De
 }
\end{equation}
Thus,
$$
F^\times_f(q_x)=f(\calA^u,\phi^u,\eta_M^u)-
\frac1p\sum_{\ze\in\mu_p}f(\calA^\ze,\phi^\ze,\eta_M^\ze)
$$
where the triple $(\calA^\ze,\phi^\ze,\eta_\nu^\ze)$ is obtained as above from the endomorphism 
of $\calW_\infty[[q_x]]$ determined by $q_x\mapsto\ze q_x+\ze-1$.
Since $\ze q_x+\ze-1=q_0^{-1}(\ze q)-1$, by proposition \ref{teo:bijecofdefs} the sum extends 
over the set ${\cal F}$ of deformations $\calA_{/\calW_\infty[[q_x]]}$ admitting a lift 
$\pi_\calA:\calA\map\calA^{u,p}=\calA^u/H^u$ of the $\kk$-linear Frobenius
($H^u<\calA^u$ the canonical subgroup). 
On the other hand, if $U_p$ and $V_p$ denote the $p$-adic counterparts of the 
classic $U$ and $V$ operators (e.g. \cite{Go88}),
$$
V_pU_pf(\calA^u,\varphi^u,\eta_M^u)=
U_pf(\calA^{u,p},\varphi^{u,p},\eta_M^{u,p})=
\frac1p\sum_{\calA\in\cal F}
f(\calA,\phi^\calA,\eta_M^\calA)
$$
where $\varphi^{u,p}=\varphi^u\pi_{\calA^u}^t$, $\eta_M^{u,p}=\pi_{\calA^u}\mu_M^u$ and for each $\calA\in\calF$, 
$\varphi^\calA=\varphi^u\pi_{A^u}^t(\pi_{\calA}^t)^{-1}$ and $\eta_M^\calA=\pi_{\calA}^{-1}\pi_{\calA^u}\eta_M^u$
(the notation is justified because each lift of Frobenius is an isomorphism between $M$-torsion subgroups and its dual
is \`etale and an isomorphism between formal groups).
Since $\varphi^\calA=\varphi^\ze$ and  $\eta_\nu^\calA=\eta_\nu^\ze$ when
$\calA=\calA^\ze$ because the lifts of Frobenius commute with the pull-back map \eqref{eq:pb} this proves that
$\mu^\times_{f,x_\nu}=\mu_{(1-V_pU_p)f,x_\nu}$.

To finish the proof, observe that for $f\in M_{1,k}^\De(Mp^\nu;\calO_{(v_\nu)})$
and for every triple $(\calA,\varphi,\eta_M)$ defined over a $p$-adic ring
$U_pf(\calA,\varphi,\eta_M)=Uf(\calA,\eta_M\times\varphi_{\left|\mu_{p^\nu}\right.}^{-1},\om_\calA(\varphi))$ and
$V_pf(\calA,\varphi,\eta_M)=Vf(\calA,\eta_M\times\varphi_{\left|\mu_{p^\nu}\right.}^{-1},\om_\calA(\varphi))$ 
 because of the compatibility of the $p$-part of the level structures, where $\om_\calA(\varphi))$ is the unique extension
 to $\calA$ of the invariant $1$-form defined by the trivialization (see remarks \ref{fa:two} and \ref{rem:faspadic}).
\end{proof}

\subsection{More measures}\label{se:moremeas}
A weight for the imaginary quadratic imaginary field $K$ is a formal linear combination
$\uw=(w_1,w_\si)=w_1\text{id}+w_\si\si\in\Z[\text{Gal}(K/\Q)]$. Write $\la^\uw=\la^{w_1}\si(\la)^{w_\si}$
for all $\la\in K$.
For an ideal $\gotn\subset\calO_K$, a weight $\uw$ and a field $K\subseteq E$ let
$$
H_\uw(\gotn;E)=
\left\{
\text{$\phi:\calI_\gotn\map E$ such that $\phi((\la)I)=\la^\uw\phi(I)$ for all $\la\in K_\gotn$}
\right\}
$$
(Hida's $K^\times$-modular forms of level $\gotn$, weight $\uw$ and coefficients in $E$, \cite{Hida86}). 
From an observation of Weil in \cite{Weil55} one knows that the algebraic Hecke characters in $H_\uw(\gotn;\C)$
are in $H_\uw(\gotn;\overline\Q)$. When $E\subseteq\overline\Q$, there are 
local avatars for the spaces $H_\uw(\gotn;E)$.
Denote $\gotn^\ast$ the prime to $p$ part of $\gotn$.
\begin{itemize}
  \item \textit{Archimedean avatars.}                 
               Denote $\arH_\uw(\gotn)$ the space of $\C$-valued functions on $K_\A^\times$ such that
               \begin{equation}
               \label{eq:grossen}
               \Phi(t\la zu)=z^{-\uw}\Phi(t)
               \qquad\text{for all $\la\in K^\times$, $z\in\C^\times$, $u\in U_\gotn$}
               \end{equation}
               where $z^{-\uw}=z^{-w_1}{\bar z}^{-w_\si}$ (the restriction of $\Phi$ to $\C$ is the infinity type of $\Phi$).
               Given $\phi\in H_\uw(\gotn;E)$ there exists a unique $\phi^{(\infty)}\in\arH_\uw(\gotn)$ such that
               $\phi^{(\infty)}(t)=h(I)$ whenever $t_v=1$ for all $v\mid\infty\gotn$ and $I=[t]$.
               The mapping $\phi\mapsto\phi^{(\infty)}$ extends by linearity to an isomorphism
               $H_\uw(\gotn;\C)=H_\uw(\gotn;\overline\Q)\otimes\C\simeq\arH_\uw(\gotn)$.
  \item \textit{$p$-adic avatars.}
           Let $F\supset E$ be a $p$-adic local subfield of $\C_p$
           and $\gotF(\gotn^\ast;F)$ the space of $F$-valued 
           continuous functions on $\gotC_{\gotn^\ast}=\gotC_\gotn=\varprojlim_{j\geq0}\CC_{\gotn p^j}$. 
           Then,  every $\phi\in H_\uw(\gotn^\ast;E)$ defines a unique $\phi^{(p)}\in\gotF(\gotn^\ast;F)$ which
           agrees with $\phi$ on the dense subset $\calI_{\gotn^\ast p}$. The mapping $\phi\mapsto\phi^{(p)}$
           identifies $H_\uw(\gotn^\ast p^r;E)\otimes F$ with the closed subspace 
           $\gotF_{\uw,r}(\gotn^\ast;F)\subset\gotF(\gotn^\ast;F)$ of functions such that
           $\Phi(xt)=x^{-\uw}\Phi(t)$ for all $x\in\calO_K\otimes\Z_p$, $x\equiv 1\bmod \gotn^\ast p^r$.
 \end{itemize}
Note that $\xi\in H_\uw(\gotn;\overline{\Q})$ is a character if and only if its avatars $\xi^{(\infty)}$ and $\xi^{(p)}$
are characters. Let
$$
H(\gotn;E)=\bigoplus_{\uw\in\Z[\text{Gal}(K/\Q)]}H_\uw(\gotn;E).
$$
If $\gotm\mid\gotn$ then $H(\gotm;E)\subset H(\gotn;E)$ and by the linear independence 
of characters the construction of the $p$-adic avatars embeds $H(\gotn\gotq;E)\otimes F$ 
in $\gotF(\gotn^\ast;F)$ when $\gotq$ is supported on primes dividing $p$. The space $H(\gotn;E)$ supports 
an $E$-valued pairing defined as follows. If $\phi_1\in H_{\uw_1}(\gotn;E)$ and 
$\phi_2\in H_{\uw_2}(\gotn;E)$ let
\begin{equation}
\label{eq:pairing}
\langle\phi_1,\phi_2\rangle=
\frac1{h_\gotn}\sum_{s\in\CC_\gotn}\phi_1(I_s)\phi_2(I_s),\qquad
\text{if $\uw_1+\uw_2=\underline{0}$}
\end{equation}
and $\langle\phi_1,\phi_2\rangle=0$ otherwise, where $\{I_s\}$ is a full set of representatives of $\CC_\gotn$.
Then, extend to the whole of $H(\gotn;E)$ by linearity. Note that the pairing is compatible with the inclusions 
$H(\gotn;E)\subset H(\gotn^\prime;E)$ for $\gotn\mid\gotn^\prime$ and that the characters satisfy the
 orthogonality relations $\langle\xi_1,\xi_2\rangle=1$ if $\xi_1\xi_2=1$ and 
 $\langle\xi_1,\xi_2\rangle=0$ otherwise. The pairing can be computed in terms of the avatars.
 \begin{itemize}
       \item \textit{Archimedean pairing.} For all $\Phi_1\in\arH_{\uw_1}(\gotn)$ and  
                $\Phi_2\in\arH_{\uw_2}(\gotn)$ let
                \begin{equation}
                \label{eq:pairinginf}
                \langle\Phi_1,\Phi_2\rangle_\infty=
                \frac1{h_\gotn}\sum_{s\in\CC_\gotn}\Phi_1(t_s)\Phi_2(t_s),\qquad
                \text{if $\uw_1+\uw_2=\underline{0}$}
                \end{equation}
                and $\langle\Phi_1,\Phi_2\rangle_\infty=0$ otherwise, where $\{t_s\}$ is a full set of 
                representatives of $\CC_\gotn$ in $K_\A^\times$. After extending to 
                $\arH(\gotn)=\bigoplus_\uw\arH_\uw(\gotn)$ by linearity, it is clear that
                $\langle\phi_1,\phi_2\rangle=\langle\phi^\infty_1,\phi^\infty_2\rangle_\infty$ for all $\phi_1$,
                $\phi_2\in H(\gotn,E)$ and their corresponding archimedean avatars since it is always 
                possible to take 
                representatives such that $t_{s,v}=1$ for all $v\mid\infty\gotn$, thus reducing \eqref{eq:pairinginf}
                to \eqref{eq:pairing}.
       \item \textit{$p$-adic pairing.} Let 
               $\gotF_r(\gotn^\ast;F)=\widehat{\bigoplus}_{\uw}\gotF_{\uw,r}(\gotn^\ast;F)$. Since 
               $\gotF_{\uw,r}(\gotn^\ast;F)$ is closed, the projection onto the $\uw$-th summand extends 
               to a continuous projection $\pi_{\uw,r}:\gotF_r(\gotn^\ast;F)\map\gotF_{\uw,r}(\gotn^\ast;F)$.
               Define $\langle\cdot,\cdot\rangle_p$ as
               $$
               \xymatrix{
              \gotF_r(\gotn^\ast;F)\times\gotF_r(\gotn^\ast;F)\ar[r]^-{m}\ar@/_2pc/[rrr]^{\langle\cdot,\cdot\rangle_p} & 
              \gotF_r(\gotn^\ast;F)\ar[r]^-{\pi_{\underline{0},r}} &
              \gotF_{\underline{0},r}(\gotn^\ast;F)\ar[r]^-{\mu_H} & F
              },
              $$
              where $m$ is multiplication and $\mu_H$ the Haar distribution 
              (bounded on $\gotF_{\underline{0},r}(\gotn;F)$).
              Then $\langle\phi_1,\phi_2\rangle=\langle\phi^{(p)}_1,\phi^{(p)}_2\rangle_p$ for all $\phi_1$,
              $\phi_2\in H(\gotn,E)$  because $\CC_n$ admits
              representantives in the dense subset 
              $\calI_{\gotn p}$, where $\phi_1=\phi^{(p)}_1$ and $\phi_2=\phi^{(p)}_2$ so that
              the computation of $\langle\phi_1,\phi_2\rangle_p$ reduces to \eqref{eq:pairing}.
 \end{itemize}
 
For a continuous character $\chi:{\widehat\calO}_{K,c}^\times\map\C^\times$,
denote $\arH_\uw(\chi)$ the linear space of functions $\Phi:K_\A^\times\map\C$ such that
\begin{equation}
\label{eq:grosswch}
\Phi(\la tzu)=z^{-\uw}\Phi(t)\chi(u)\qquad
\text{for all $\la\in K^\times$, $z\in\C^\times$, $u\in{\widehat\calO}_{K,c}^\times$.}
\end{equation}
Let $\gotn_\chi$ be the smallest ideal such that $\arH_\uw(\chi)\subset\arH_\uw(\gotn_\chi)$.
Note that $\gotn_\chi$
is at least divisible by the primes $\gotp$ not in the support of  $c$ and such that
$\chi_{|(\calO_{K,c})_\gotp^\times}\not\equiv1$ and may be divisible by primes $\gotp$ in the support of $c$
even if $\chi_{|(\calO_{K,c})_\gotp^\times}\equiv1$.
For $\Phi_1\in\arH_{\uw_1}(\chi_1)$ and $\Phi_2\in\arH_{\uw_2}(\chi_2)$
with $\uw_1+\uw_2=0$ and $\chi_1\chi_2=1$
the pairing simplifies to
$$
\langle\Phi_1,\Phi_2\rangle_\infty=
\frac1{h_c^\sharp}\sum_{s\in\CC_c^\sharp}\Phi_1(t_s)\Phi_2(t_s),
$$
where $\{t_s\}$ is now a full set of  representatives of 
$\CC_c^\sharp$ in $K_\A^\times$, which
we can assume to be $p$-normalized, i.e. 
$t_{s,\infty}=1$, $t_{s,\ell}\in\calO_K\otimes\Z_\ell$ for all primes $\ell$
and furthermore $t_{s,v}$ a $v$-unit for all places $v\mid pc$, for all $s\in\CC_c^\sharp$.

\bigskip
Let $\mu_1$, $\mu_2$ be measures on $\Z_p$ with values in the $p$-adic Banach $F$-space $\gotB$.
For a continuous function $h:\Z_p^2\map\Z_p$ let $\mu_h=h_*(\mu_1\otimes\mu_2)$
the $\gotB{\widehat\otimes}_F\gotB$-valued measure on $\Z_p$ such that
$\int_{\Z_p}\varphi(t)\,d\mu_h(t)=\iint_{\Z_p^2}\varphi(h(t_1,t_2))\,d(\mu_1\otimes\mu_2)(t_1,t_2)$
for all $\varphi\in\calC^0(\Z_p,F)$.
When $h=m$ is the multiplication
the moments of the measure $\mu_m$ are simply
$$
m_n(\mu_m)=\int_{\Z_p}t^n\,d\mu_m=\iint_{\Z_p^2}(t_1t_2)^n\,d(\mu_1\otimes\mu_2)(t_1,t_2)
=m_n(\mu_1)\otimes m_n(\mu_2)
$$
for all $n\geq0$. In particular, if $\gotB=F$ and $\mu_2=\de_z$ is the Dirac measure at $z\in\Z_p$ then 
$m_n(\mu_m)=m_n(\mu_1)z^n$.
Also, $t_1t_2\in\Z_p^\times$ if and only if $(t_1,t_2)\in(\Z_p^\times)^2$ so that
$$
\mu_m^\times=m_*(\mu_1^\times\otimes\mu_2^\times).
$$


\begin{pro}\label{pairedmeas}
Let $\mu_1$ and $\mu_2$ be $\gotF(\gotn^\ast;F)$-valued measures on $\Z_p$
such that $m_n(\mu_i)=\int_{\Z_p}t^n\,d\mu_i(t)=\la_{n,i}\phi^{(p)}_{n,i}$ for all $n\geq0$ and $i\in\{1,2\}$,  
where $\la_{n,i}\in F$ and $\phi^{(p)}_{n,i}$ is the $p$-adic avatar of some $\phi_{n,i}\in H_{\uw_{n,i}}(\gotn;E)$ 
for a subfield $E\subset\C$ 
admitting an embedding into $F$ and such that $\uw_{n,1}+\uw_{n,2}=\underline 0$. Then there exists an 
$F$-valued measure $\mu=\mu_{\langle\mu_1,\mu_2\rangle}$ on $\Z_p$ such that
$$
m_n(\mu)=\int_{\Z_p}t^n\,d\mu(t)
=\la_{n,1}\la_{n,2}\langle\phi_{n,1},\phi_{n,2}\rangle.
$$
Moreover, $\mu^\times=\mu_{\langle\mu_1^\times,\mu_2^\times\rangle}$.
\end{pro}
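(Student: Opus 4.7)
The plan is to build $\mu$ by assembling three continuous operations on measures: forming the product, pushing forward under multiplication, and applying the $p$-adic pairing as a continuous linear map. By hypothesis both $\mu_i$ take values in $\gotF(\gotn^\ast;F)$, and since the level of the avatars $\phi^{(p)}_{n,i}$ is bounded by the common ideal $\gotn$, we may localize the construction inside a fixed $\gotF_r(\gotn^\ast;F)$ for a suitable $r$ (the continuity of each $\mu_i$ forces its image to land in some such closed subspace).

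First, form the product measure $\mu_1\otimes\mu_2$ on $\Z_p^2$ with values in $\gotF_r(\gotn^\ast;F)\widehat\otimes_F\gotF_r(\gotn^\ast;F)$ and push it forward under $m\colon\Z_p^2\to\Z_p$, $(t_1,t_2)\mapsto t_1t_2$, to obtain the measure $\mu_m$ discussed just before the proposition. Its $n$-th moment was already computed as
$$
m_n(\mu_m)=m_n(\mu_1)\otimes m_n(\mu_2)=\la_{n,1}\la_{n,2}\,\phi^{(p)}_{n,1}\otimes\phi^{(p)}_{n,2}.
$$
Second, observe that the pairing $\langle\cdot,\cdot\rangle_p$, being the composition of the bounded bilinear multiplication, the continuous projection $\pi_{\underline 0,r}$, and the bounded Haar functional $\mu_H$ on $\gotF_{\underline 0,r}(\gotn^\ast;F)$, is a bounded bilinear form. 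It therefore factors through a continuous $F$-linear map
$$
L\colon\gotF_r(\gotn^\ast;F)\widehat\otimes_F\gotF_r(\gotn^\ast;F)\mmap F.
$$
Third, define $\mu=L_\ast(\mu_m)$. Since pushforward by a continuous linear map transports moments functorially, we obtain
$$
m_n(\mu)=L(m_n(\mu_m))=\la_{n,1}\la_{n,2}\,\langle\phi^{(p)}_{n,1},\phi^{(p)}_{n,2}\rangle_p=\la_{n,1}\la_{n,2}\,\langle\phi_{n,1},\phi_{n,2}\rangle,
$$
where the last equality is the compatibility of $\langle\cdot,\cdot\rangle$ with its $p$-adic avatar noted after the definition of the $p$-adic pairing.

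Finally, for the claim about restrictions to $\Z_p^\times$, apply $L_\ast$ to the identity $\mu_m^\times=m_\ast(\mu_1^\times\otimes\mu_2^\times)$ displayed in the paragraph preceding the proposition; since pushforward and restriction to $\Z_p^\times$ both commute with the continuous linear map $L$, this gives $\mu^\times=L_\ast(\mu_m^\times)=\mu_{\langle\mu_1^\times,\mu_2^\times\rangle}$. The main point requiring care is the boundedness of the pairing, which reduces to the boundedness of $\mu_H$ on $\gotF_{\underline 0,r}(\gotn^\ast;F)$ already recorded in the text; once that is in hand, the rest is a formal exercise in the functoriality of pushforward of $p$-adic measures.
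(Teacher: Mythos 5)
Your proof is correct and follows essentially the same route as the paper: fix $r$ so everything lands in $\gotF_r(\gotn^\ast;F)$, invoke boundedness of $\langle\cdot,\cdot\rangle_p$ to get a bounded functional on the completed tensor product, and define $\mu$ by composing it with the pushforward $m_\ast(\mu_1\otimes\mu_2)$. The only difference is presentational — you split the paper's single defining formula into the two steps ``form $\mu_m$, then apply $L_\ast$'' — which if anything makes the functoriality used for the $\Z_p^\times$-restriction claim a bit more transparent.
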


\begin{proof}
Fix $r\geq0$ such that $\gotn\mid\gotn^\ast p^r$: it is clear that 
$\int_{\Z_p}\varphi\,d\mu_i\in\gotF_r(\gotn^\ast;F)$ for all $\varphi\in\calC^0(\Z_p,F)$.
Since  the Haar distribution is bounded on $\gotF_0(\gotn^\ast;F)$,
the pairing $\langle\cdot,\cdot\rangle_p$ is bounded on $\gotF_r(\gotn^\ast;F)\times\gotF_r(\gotn^\ast;F)$.
If $F_p$ denotes the bounded linear functional on $\gotF_r(\gotn^\ast;F)\widehat\otimes\gotF_r(\gotn^\ast;F)$
corresponding to  $\langle\cdot,\cdot\rangle_p$
the setting
$$
\int_{\Z_p}\varphi(t)\,d\mu(t)=
F_p\left(\iint_{\Z_p^2}\varphi(t_1t_2)\,d(\hat\mu_1\times\hat\mu_2)(t_1,t_2)\right)
$$
defines a measure on $\Z_p$ which has the stated properties because of the preceding discussion.
\end{proof}

\begin{rem}\rm
Composing either measure $\mu_i$ with the evaluation map at any $I\in\calI_{\gotn}$ defines
$F$-valued measures $\mu_i^I$ such that $m_n(\mu_i^I)=\la_{n,i}\phi^{(p)}_{n.i}(I)$. Taking a system of representatives of
$\CC_\gotn$ in $\calI_{\gotn^\ast p}$ yields
\begin{multline*}
\langle m_r(\mu_1),m_r(\mu_2)\rangle_p=\\
\frac{\la_{r,1}\la_{r,2}}{h_\gotn}\sum_{s\in\CC_\gotn}\phi_{r,1}(I_s)\phi_{r,2}(I_s)=
\frac1{h_\gotn}\sum_{s\in\CC_\gotn}m_r(m_*(\mu_1^{I_s}\times\mu_2^{I_s})).
\end{multline*}
By additivity of measures, this provides an alternate method of constructing 
$\mu_{\langle\mu_1,\mu_2\rangle}$.
\end{rem}

\begin{rem}\label{rem:twists}\rm
Given a character $\psi:\CC_\gotn\map\C^\times$ the formula
$$
\langle\phi_1,\phi_2\rangle^\psi=\langle\phi_1,\psi\phi_2\rangle
$$
defines a twisted version of the pairing \eqref{eq:pairing} 
which has likewise archimedean and $p$-adic counterparts. 
The same argument of the proof of proposition \ref{pairedmeas} yields a measure $\mu^\psi$ on $\Z_p$
such that $\int_{\Z_p}t^n\,d\mu(t)=\la_{n,1}\la_{n,2}\langle\phi_{n,1},\phi_{n,2}\rangle^\psi$.
The measures $\mu^\psi$ can be read as a measure $\mu$ on $\CC_\gotn\times\Z_p$ such that
$$
\int_{\CC_\gotn\times\Z_p}\psi(s)t^n\,d\mu(s,t)=\la_{n,1}\la_{n,2}\langle\phi_{n,1},\psi\phi_{n,2}\rangle
$$

\end{rem}
 
We shall now define the measure which will be used for the interpolation process in the next section.
Let $\rho\in\Hom^\sharp(K,D)$ with corresponding point $x_0=[\tau]\in\CM(\De,N;\calO_{K,c})$.
Its adelic extension $\rho_\A$ gives rise to an inclusion
$$
 \bar\rho_\A:\CC_c^\sharp
 \mmap
 D^\times\bs D_\A^\times/\rho(\C^\times)\widehat\calR_N^\times
 \hookrightarrow X^\De_0(N)
 $$
 (the last map is actually an equality when $\De>1$).
 Under the decomposition
 \begin{equation}
\label{eq:decompDstar}
D_\A^\times=D_\Q^\times\GL_2(\R)^+\widehat\calR_N^\times
\end{equation}
the idele $d=d_\Q g_\infty\hat r$ corresponds to the point represented by $g_\infty\tau$.
The canonical map of class field theory
$K_\A^\times\mmap\Gal(K^{\rm ab}/K)$, where $t\mapsto[t,K]$, yields a canonical identification 
$\CC_c^\sharp\simeq\Gal(H_c/K)$
where $H_c$ is the ring class field of conductor $c$. 
The points in ${\rm Im}(\bar\rho_\A)$ are defined over $H_c$ and by Shimura's 
reciprocity law the two Galois actions are compatible. In particular, if $s=[t,K]_{\left|H_c\right.}$
then $\bar\rho_\A(\bar t)=x_0^s$ and $A_{x_0^s}^s\simeq A_{x_0}$.
Also, for a choice of $p$-normalized representatives $\{t_s\}$ the theory of complex multiplications
yields isogenies of complex tori of prime to $p$  degree
$$
\xymatrix{A_{g_s\tau}=\C^\ep/\La_{g_s\tau}\ar[r]^-{j(g_s,\tau)}_-\sim &
A_{x_0^s}(\C)=\C^\ep/t_s\La_{\tau}\ar[r]^-{\pi_{t_s}} & A_{x_0}(\C)=\C^\ep/\La_{\tau}}
$$
($\ep=1$ or $2$) where $\pi_{t_s}$ is a quotient map.
Let $w\mid p$ a place in a large enough field $L$ such that $(x^s,w,e)$ is a 
$p$-ordinary split arithmetic object for $(\De,N,K)$ for all $s\in\CC_c^\sharp$ and for all $x^s$
mapping to $x_0^s$ under the natural map $X_1^\De(N)\map X_0^\De(N)$,
and also such that the isogenies $\pi_{t_s}$ are defined over $L$.
Fix an invariant $1$-form $\om_o$ on $A_x$ as in \eqref{eq:nerondiff} and let $\om_{t_s}=\pi_{t_s}^*(\om_o)$.
Then $\om_{t_s}$ satisfies \eqref{eq:nerondiff} for $A_{x_s}$ so that it may be assumed that
\begin{equation}\label{eq:Omrelation}
\Om_\infty(g_s\tau)=j(g_s,\tau)\Om_\infty(\tau)\quad\text{and}\quad
\Om_p(x^s)=\Om_p(x).
\end{equation}
If $t_s^\prime=t_s\la zu$ is another $p$-normalized representative of  $s\in\CC_c^\sharp$, then
comparing decompositions \eqref{eq:decompDstar} yields
$\om_{t^\prime_s}\sim_{\calO_{K,c}^\times}z\om_{t_s}$. Thus, the construction of $\om_t$ can be extended,
up to $\calO_{K,c}^\times$-association, to all of $K_\A^\times$ by setting
$\om_{t\la zu}\sim_{\calO_{K,c}^\times}z\om_t$
for all $\la\in K^\times$, $z\in\C^\times$, $u\in\calO_{K,c}^\times$ where $t$ is $p$-normalized.

Fix a point $x\in X_1^\De(N)$ such that $x\mapsto x_0$ under the natural map and assume that $x$ belongs to
a $p$-ordinary split arithmetic object (this depends only on $x_0$).
Let  $f\in M_{0,2\ka}^\De(N,\vep;\calO_{(v)})$. The nebentypus $\vep$ gives rise to 
a character $\vep_\A$ of $\calR_{N}^\times$  and thus to a character 
$\vep_\rho$ of $\widehat\calO_{K,c}^\times$ via the embedding $\rho$.
For all $r\geq0$ such that $(\calO_{K,c}^\times)^{2(\ka+r)}=1$ define a complex valued function
$\vth^{(\infty)}(r,f,x)$ on $K_\A^\times$ as
$$
\vth^{(\infty)}(r,f,x)(t)=\frac{\de_{2\ka}^{(r)}f(g_\infty\tau)\vep_\A(\hat r)}{\perr(\om_t,g_\infty\tau)^{2(\ka+r)}},
$$
where $\rho_\A(t)=d_0g_\infty\hat r$ under \eqref{eq:decompDstar}.

\begin{pro}
$\vth^{(\infty)}(r,f,x)\in\arH_{(2(\ka+r),0)}(\vep_\rho)$.
Moreover, $\vth^{(\infty)}(r,f,x)$ is the archimedean avatar of 
$\vth(r,f,x)\in H_{(2(\ka+r),0)}(\gotn_{\vep_\rho};L)$ and its $p$-adic avatar
$\vth^{(p)}(r,f,x)$ is in $\gotS(\gotn_{\vep_\rho}^\ast;L_w)$, the unitary ball in
$\gotF(\gotn_{\vep_\rho}^\ast;L_w)$ under the sup norm.
\end{pro}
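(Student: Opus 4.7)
The plan is to verify the three assertions in turn, each following from direct computation combined with Theorem~\ref{integcoeff}. First, for membership in $\arH_{(2(\ka+r),0)}(\vep_\rho)$, I would check the transformation law \eqref{eq:grosswch} by direct substitution. Writing $\rho_\A(t)=d_0 g_\infty\hat r$ as in \eqref{eq:decompDstar}, multiplication in $K_\A^\times$ gives $\rho_\A(\la t z u)=\rho(\la)d_0\cdot g_\infty\rho_\infty(z)\cdot\hat r\rho(u)$, where $\rho(u)\in\widehat\calR_N^\times$ because $\rho$ has conductor $c$ relative to $N$, so $\rho(\calO_{K,c})\subseteq\calR_N$. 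Three ingredients then produce the asserted weight and character: the normalization of $\rho$ implies $\rho_\infty(z)\cdot\tau=\tau$ with $j(\rho_\infty(z),\tau)=z$, so $\de_{2\ka}^{(r)}f(g_\infty\rho_\infty(z)\tau)=\de_{2\ka}^{(r)}f(g_\infty\tau)$; the scaling rule $\om_{\la tzu}=z\om_t$ (up to $\calO_{K,c}^\times$-association) yields $\perr(\om_{\la tzu},g_\infty\tau)^{2(\ka+r)}=z^{2(\ka+r)}\perr(\om_t,g_\infty\tau)^{2(\ka+r)}$, the $\calO_{K,c}^\times$-ambiguity being killed by the standing hypothesis $(\calO_{K,c}^\times)^{2(\ka+r)}=1$; and $\vep_\A(\hat r\rho(u))=\vep_\A(\hat r)\vep_\rho(u)$ by definition of $\vep_\rho$. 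Combining, $\vth^{(\infty)}(r,f,x)(\la tzu)=z^{-2(\ka+r)}\vep_\rho(u)\vth^{(\infty)}(r,f,x)(t)$. Independence of the chosen decomposition reduces to the nebentypus transformation of $\de_{2\ka}^{(r)}f$ under $\Ga_0^\De(N)$, which compensates precisely the ambiguity $\ga\in D^\times\cap\GL_2^+(\R)\widehat\calR_N^\times$.

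Second, to exhibit the algebraic $K^\times$-modular form, I evaluate at a $p$-normalized representative $t_s$ of a class $s\in\CC_c^\sharp$ chosen with $t_{s,v}=1$ for every $v\mid\infty\gotn_{\vep_\rho}$. Writing $\rho_\A(t_s)=d_{0,s}g_s\hat r_s$ and $\tau_s=g_s\tau$, so $[\tau_s]=x_0^s$, the definition reads
$$
\vth^{(\infty)}(r,f,x)(t_s)=\vep_\A(\hat r_s)\,\frac{\de_{2\ka}^{(r)}f(\tau_s)}{\perr(\om_{t_s},\tau_s)^{2(\ka+r)}}=\vep_\A(\hat r_s)\,\th_v^{(r)}(f,x^s),
$$
via $\om_{t_s}=\pi_{t_s}^\ast(\om_o)$ and the identities \eqref{eq:Omrelation}. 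By the setup, $L$ has been chosen so that $(x^s,w,e)$ is a $p$-ordinary split arithmetic object for every $s\in\CC_c^\sharp$; Theorem~\ref{integcoeff} then yields $\th_v^{(r)}(f,x^s)=\Om_p^{-2(\ka+r)}b_r(f,x^s)\in\calO_{(w)}\subset L$, and $\vep_\A(\hat r_s)$ is a root of unity in $L$, so the value lies in $L$. Setting $\vth(r,f,x)([t_s])=\vep_\A(\hat r_s)\th_v^{(r)}(f,x^s)$ and extending by $\vth(r,f,x)((\la)I)=\la^{(2(\ka+r),0)}\vth(r,f,x)(I)$ defines the sought element of $H_{(2(\ka+r),0)}(\gotn_{\vep_\rho};L)$; its archimedean avatar matches $\vth^{(\infty)}(r,f,x)$ by the first paragraph and by the uniqueness clause of the avatar correspondence.

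Third, for the $p$-adic bound, the avatar $\vth^{(p)}(r,f,x)$ is the unique continuous extension of $\vth(r,f,x)$ from the dense subset $\calI_{\gotn_{\vep_\rho}^\ast p}\subset\gotC_{\gotn_{\vep_\rho}^\ast}$. On that dense subset the values $\vep_\A(\hat r_s)\,\th_v^{(r)}(f,x^s)$ lie in $\calO_{(w)}\subseteq\calO_{L_w}$ by the previous paragraph; continuity on the compact profinite group $\gotC_{\gotn_{\vep_\rho}^\ast}$ then confines $\vth^{(p)}(r,f,x)$ to the closed unit ball $\gotS(\gotn_{\vep_\rho}^\ast;L_w)$. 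The main technical obstacle in the whole argument is the uniform $w$-integrality of $\th_v^{(r)}(f,x^s)$ across the full Galois orbit: Theorem~\ref{integcoeff} speaks of a single arithmetic test object, and its conclusion must be transported simultaneously to each $x^s$. This is ultimately secured by the compatibility of the adelic parametrization $\bar\rho_\A$ with Shimura reciprocity, together with the explicit relations \eqref{eq:Omrelation} which guarantee that the isogenies $\pi_{t_s}$ carry the relevant integral de~Rham and Serre-Tate structures consistently from $x$ to $x^s$.
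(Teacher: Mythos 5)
Your proof is correct and follows essentially the same route as the paper: verify the transformation law \eqref{eq:grosswch} directly from the decomposition of $\rho_\A(\la t z u)$ under \eqref{eq:decompDstar} together with the scaling rule for $\om_t$, then reduce algebraicity and $w$-integrality of the values to Theorem~\ref{integcoeff} by evaluating on suitable representatives of $\CC_c^\sharp$. The only cosmetic difference is in the bookkeeping of the nebentypus: you carry the factor $\vep_\A(\hat r_s)$ explicitly, whereas the paper absorbs it (and the residual $\vep_\rho(u)$ coming from the $\infty\gotn_{\vep_\rho}$-normalized representative) into the choice of lift $x^s\in X_1^\De(N)$ over $x_0^s$, writing the value simply as $\la^{2(\ka+r)}\th_w^{(r)}(f,x^s)$; since both quantities differ only by roots of unity and a $p$-unit power, the integrality conclusions are unaffected.
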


\begin{proof}
The function $\vth^{(\infty)}(r,f,x)$ satisfies \eqref{eq:grosswch} for $\uw=(2(\ka+r),0)$ and 
$\chi=\vep_\rho$ simply from the definition of $\om_t$ and because if 
$\rho_\A(t)=d_0g_\infty\hat r$ under \eqref{eq:decompDstar}, 
then $\rho_\A(t\la zu)=(d_0\la)(g_\infty z)(\hat ru)$ and $\rho_\infty(z)$ fixes $\tau$.

If $I=[t]\in\calI_{\gotn_{\vep_\rho}}$ with $t_v=1$ for all $v\mid\infty\gotn_{\vep_\rho}$, write $t=t_s\la zu$ for some 
$\la\in K^\times$, $z\in\C^\times$, $u\in\widehat\calO_{K,c}^\times$ and $t_s$ $p$-normalized. Then 
$\la z=1$, so that
\begin{equation}
\label{eq:valueofvth}
\vth(r,f,x)(I)=\vth^{(\infty)}(r,f,\rho)(t_s\la zu)=\la^{2(\ka+r)}\th_w^{(r)}(f,x^s)
\end{equation}
for some $X_1^\De(N)\ni x^s\mapsto x_0^s$ and  theorem \ref{integcoeff} applies.
Finally, if the ideal $I$ is also prime to $p$ the right hand side in \eqref{eq:valueofvth} 
is in $\calO_{(w)}$, again by 
theorem \ref{integcoeff}, because $\la$ is a $p$-unit.
\end{proof}

With a slight abuse of notation, denote $L_w^\prime$ the completion of $L_w(\Om_p)$
with ring of integers $\calO^\prime_w$.
\begin{pro}\label{pr:meastheta}
There is a unique $\gotS(\gotn_{\vep_\rho}^\ast;L_w^\prime)$-valued measure 
$\mu_{f,\rho}$ on $\Z_p$ such that
$$
m_r(\mu_{f,\rho})=\int_{\Z_p}t^r\,d\mu_{f,\rho}(t)=\Om_p^{2(\ka+r)}\vth^{(p)}(r,f,x),\qquad
\text{for all $r\geq0$.}
$$
\end{pro}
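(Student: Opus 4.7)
\smallskip
\noindent\emph{Plan of proof.} Uniqueness is immediate: polynomials in $t$ are dense in $\calC^0(\Z_p,L_w^\prime)$ by Mahler's theorem, so any measure on $\Z_p$ valued in the Banach space $\gotF(\gotn_{\vep_\rho}^\ast;L_w^\prime)$ is determined by its moments. The existence plan is to glue together the Serre--Tate measures constructed in the previous subsection at all the Galois conjugates of $x$, using Shimura reciprocity to parameterize them by classes in the profinite ray class group $\gotC_{\gotn_{\vep_\rho}^\ast}$.

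Concretely, fix $\bar t\in\gotC_{\gotn_{\vep_\rho}^\ast}$ with a $p$-normalized representative $t\in K_\A^\times$. Via $\bar\rho_\A$ and Shimura reciprocity, extended in the $p$-direction as in Remark \ref{rem:faspadic}, the class $\bar t$ determines a compatible sequence $(x^{\bar t}_j)_{j\geq 0}$ of arithmetic $p$-ordinary split CM test objects for $(\De,Np^j,K)$, hence a Serre--Tate measure $\mu_{f,x^{\bar t}}$ on $\Z_p$ with values in $\calW(\Om_p)$ and $r$-th moment $b_r(f,x^{\bar t})=\Om_p^{2(\ka+r)}\th_w^{(r)}(f,x^{\bar t})$ (using \eqref{eq:Omrelation} to make $\Om_p$ independent of $\bar t$). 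Writing $t=t_s\la zu$ relative to a $p$-normalized representative $t_s$ of the class $s=\bar t\bmod\CC_c^\sharp$, the factor $\la\in K^\times$ is a $p$-unit by construction, and since $p$ splits in $K$ its image $\la_\gotp\in\Z_p^\times$ acts on $\Z_p$ by multiplication, so we may form
\[
\mu^{\bar t}:=\la^{2\ka}\,(t\mapsto\la^2 t)_*\mu_{f,x^{\bar t}},
\]
a measure on $\Z_p$ valued in the unit ball $\calO_w^\prime\subset L_w^\prime$ whose $r$-th moment is $\la^{2(\ka+r)}b_r(f,x^{\bar t})$. By \eqref{eq:valueofvth} this equals $\Om_p^{2(\ka+r)}\vth^{(p)}(r,f,x)(\bar t)$, and the $\calO_{K,c}^\times$-covariance $\om_t\sim z\,\om_{t_s}$ makes $\mu^{\bar t}$ independent of the choice of representative.

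The main obstacle is to promote the pointwise assignment $\bar t\mapsto\mu^{\bar t}$ to a genuine $\gotS(\gotn_{\vep_\rho}^\ast;L_w^\prime)$-valued measure on $\Z_p$, which amounts to showing that the Mahler coefficients of $\mu^{\bar t}$ define continuous functions $\gotC_{\gotn_{\vep_\rho}^\ast}\to\calO_w^\prime$ uniformly bounded by $1$. At a fixed $\bar t$, the $n$-th Mahler coefficient equals $\la^{2\ka}\int_{\Z_p}\binom{\la^2t}{n}\,d\mu_{f,x^{\bar t}}(t)$, which lies in $\calO_w^\prime$ uniformly in $n$ by the integrality of the Serre--Tate expansion \eqref{eq:STexp}. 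Continuity in $\bar t$ reduces to continuity of $\bar t\mapsto a_n(f,x^{\bar t})$: since $f$ is defined over the number field $L$, Shimura reciprocity identifies the Galois action $[t,K]$ on CM points with the action of $\bar t$ on $\gotC_{\gotn_{\vep_\rho}^\ast}$, so the Serre--Tate coefficients transform Galois-equivariantly, and their pro-$p$ variation is controlled by the compatibility of the test objects $x^{\bar t}_j$ under the forgetful maps $\XarD(Np^{j+1})\to\XarD(Np^j)$ encoding the Igusa tower. Once continuity and uniform boundedness are established, Mahler's theorem packages the family $\{\mu^{\bar t}\}$ into the desired $\gotS$-valued measure $\mu_{f,\rho}$, and the moment formula holds by construction.
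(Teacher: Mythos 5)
Your construction --- defining $\mu^{\bar t}=\la^{2\ka}(t\mapsto\la^2t)_*\mu_{f,x^s}$ for $\bar t$ in the dense set of classes represented by prime-to-$p$ ideals, and noting that its $r$-th moment is $\Om_p^{2(\ka+r)}\vth^{(p)}(r,f,x)(\bar t)$ --- is exactly the paper's measure $\mu_I$, and the continuity and boundedness needed to promote this pointwise family to a $\gotS(\gotn_{\vep_\rho}^\ast;L_w^\prime)$-valued measure come precisely from the fact established in the preceding proposition, namely that $\vth^{(p)}(r,f,x)$ lies in the unit ball of $\gotF(\gotn_{\vep_\rho}^\ast;L_w)$, i.e.\ is a continuous function on $\gotC_{\gotn_{\vep_\rho}^\ast}$ of sup-norm at most $1$.

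Your final paragraph, however, misidentifies where the $p$-adic continuity lives. The Serre--Tate coefficients $a_n(f,x^{\bar t})$ depend on $\bar t$ only through its image $s$ in the \emph{finite} group $\CC_c^\sharp$, so the map $\bar t\mapsto a_n(f,x^{\bar t})$ is locally constant and its continuity carries no content; the Igusa-tower compatibility is likewise beside the point, since by Remark \ref{rem:faspadic} the expansion is unchanged along the tower. The genuinely pro-$p$ variation in $\bar t$ sits entirely in the unit $\la=\la(\bar t)\in\Z_p^\times$, and what controls it is exactly the fact you already recorded one paragraph earlier: the $r$-th moment of $\mu^{\bar t}$ equals the continuous function $\Om_p^{2(\ka+r)}\vth^{(p)}(r,f,x)$ evaluated at $\bar t$. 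Since each Mahler coefficient of $\mu^{\bar t}$ is a finite $\Q$-linear combination $\frac1{n!}\sum_i\ga_{n,i}\,m_i(\mu^{\bar t})$ of the moments and is uniformly $\calO_w^\prime$-integral by Theorem \ref{integcoeff}, continuity of the moments gives continuity of the Mahler coefficients and the integrality gives the uniform bound. So the construction is correct, but the closing continuity argument should simply invoke the $p$-adic avatar $\vth^{(p)}$, as the paper does, rather than appeal to the (vacuous) continuity of the Serre--Tate coefficients.
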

\begin{proof}
Formula \eqref{eq:valueofvth} together with the fact that $\la\in\Z_p^\times$ because 
$p$ splits in $K$ says that on the dense subset $\calI_{p\gotn_{\vep_\rho}}$
the value $\Om_p^{2(\ka+r)}\vth(r,f,x)(I)=\la^{2\ka}m_r(\mu_{f,x^s})$ 
is the $r$-th moment of a $L^\prime_w$-valued measure $\mu_I$
on $\Z_p$. It is clear by continuity and the existence of the $p$-adic avatars $\vth^{(p)}(r,f,x)$ that for any 
continuous $\calO_w^\prime$-valued function $\varphi$ on $\Z_p$ the function
$I\mapsto\int_{\Z_p}\varphi(t)\,d\mu_I(t)$
extends uniquely to $\gotC_{\gotn_{\vep_\rho}^\ast}$ to a continuous $\calO_w^\prime$-valued function.
\end{proof}

\begin{cor}\label{measthstar}
$m_r(\mu_{f,\rho}^\times)=\Om_p^{2(\ka+r)}\vth^{(p)}(r,(1-VU)f,x)$.
\end{cor}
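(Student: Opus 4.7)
The plan is to reduce the equality of $\gotS(\gotn_{\vep_\rho}^\ast;L_w^\prime)$-valued moments to the scalar Serre--Tate identity of theorem \ref{th:mustar} by evaluating on the dense subset $\calI_{p\gotn_{\vep_\rho}}\subset\gotC_{\gotn_{\vep_\rho}^\ast}$ and then extending by continuity.

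First I would fix an ideal $I=[t]\in\calI_{p\gotn_{\vep_\rho}}$ and write $t=t_s\la zu$ for a $p$-normalized representative $t_s$ of $s\in\CC_c^\sharp$, with $\la\in K^\times$, $z\in\C^\times$, $u\in\widehat\calO_{K,c}^\times$. Combining \eqref{eq:valueofvth} with theorem \ref{integcoeff} (as already exploited in the proof of proposition \ref{pr:meastheta}), the fiber of the measure $\mu_{f,\rho}$ at $I$ can be identified with the pushforward
$$\mu_{f,\rho}(I)=\la^{2\ka}(\la^2\,\cdot)_\ast\mu_{f,x^s},$$
whose $r$-th moment equals $\la^{2(\ka+r)}b_r(f,x^s)=\Om_p^{2(\ka+r)}\vth^{(p)}(r,f,x)(I)$. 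Crucially, since $p$ splits in $K$ and $I$ is prime to $p$, one has $\la\in\Z_p^\times$, so multiplication by $\la^2$ stabilizes $\Z_p^\times$.

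Next, since the $\Z_p^\times$-restriction commutes with the $(\la^2\,\cdot)_\ast$-pushforward (as $\la^2\in\Z_p^\times$) and with the evaluation-at-$I$ operation, one obtains
$$\mu_{f,\rho}^\times(I)=\la^{2\ka}(\la^2\,\cdot)_\ast\mu_{f,x^s}^\times=\la^{2\ka}(\la^2\,\cdot)_\ast\mu_{(1-VU)f,x_{\nu+1}^s},$$
where the second equality is precisely theorem \ref{th:mustar} applied to the scalar Serre--Tate measure at $x^s$. Taking the $r$-th moment and invoking \eqref{eq:valueofvth} a second time, now for $(1-VU)f$, yields
$$m_r(\mu_{f,\rho}^\times)(I)=\la^{2(\ka+r)}\Om_p^{2(\ka+r)}\th_w^{(r)}((1-VU)f,x^s)=\Om_p^{2(\ka+r)}\vth^{(p)}(r,(1-VU)f,x)(I).$$
Density of $\calI_{p\gotn_{\vep_\rho}}$ in $\gotC_{\gotn_{\vep_\rho}^\ast}$ and the continuity provided by proposition \ref{pr:meastheta} upgrade this pointwise identity to the claimed equality of $\gotS(\gotn_{\vep_\rho}^\ast;L_w^\prime)$-valued functions.

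The only real subtlety is to check that the level shift $Mp^\nu\rightsquigarrow Mp^{\nu+1}$ introduced by theorem \ref{th:mustar} is harmless on the automorphic side: both $\th_w^{(r)}$ and $\vth^{(p)}$ depend only on the underlying CM elliptic curve or QM abelian surface and on the values of the Maass derivatives of the form at $\tau$, not on the $p$-part of the arithmetic level structure, and the periods $\Om_\infty$ and $\Om_p$ are left unchanged by \eqref{eq:Omrelation} and by the compatibility of trivializations recorded in remark \ref{rem:faspadic}. Once this is noted, the whole proof is a pointwise repackaging of theorem \ref{th:mustar} over the Galois orbit.
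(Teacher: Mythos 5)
Your argument is correct and is essentially a detailed spelling out of the paper's one‑line proof, which simply cites Theorem~\ref{th:mustar} and leaves the reduction implicit. You correctly identify the mechanism: evaluate $\mu_{f,\rho}$ at an ideal $I\in\calI_{p\gotn_{\vep_\rho}}$ to recover a scalar measure of the form $\la^{2\ka}(\la^2\cdot)_\ast\mu_{f,x^s}$ (your expression is in fact the precise version — the displayed formula inside the proof of Proposition~\ref{pr:meastheta} drops a factor $\la^{2r}$, which your pushforward formulation restores), observe that restriction to $\Z_p^\times$ commutes with the pushforward since $\la\in\Z_p^\times$ and with evaluation since both are bounded linear operations, apply Theorem~\ref{th:mustar} fiberwise, and extend by the density of $\calI_{p\gotn_{\vep_\rho}}$ in $\gotC_{\gotn_{\vep_\rho}^\ast}$. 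Your final remark about the level shift $Mp^\nu\rightsquigarrow Mp^{\nu+1}$ being invisible at the level of $\th_w^{(r)}$ and the periods is a genuine point worth making explicit and is handled correctly via Remark~\ref{rem:faspadic} and \eqref{eq:Omrelation}.
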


\begin{proof}
This is a direct application of theorem \ref{th:mustar}.
\end{proof}

\begin{rem}\label{rem:onchar}\rm
In the next section the functions $\vth(r,f,x)$, or any of their avatars, will be paired against characters.
Proposition \ref{pairedmeas} suggests to consider   $\gotS(\gotn_{\vep_\rho}^\ast;F_w)$-valued measures
with moments characters of weight $(-2(\ka+r),0)$. A basic way to obtain such measures is as 
follows. Let $\chi_0\in H_{(-2\ka,0)}(\gotn;E_0)$ and $\chi\in H_{(2,0)}(\gotn;E)$ be characters such that 
$p$ splits completely in $E$ and let $\chi_r=\chi_0\chi^r$ for all $r\geq0$. Let $F$ be a $p$-adic field 
containing both $E_0$ and $E$. Then there exists a $\gotS(\gotn;F)$-valued meaure $\mu_{\chi_0,\chi}$ on 
$\Z_p$ such that
$$
m_r(\mu_{\chi_0,\chi})=\int_{\Z_p}t^r\,d\mu_{\chi_0,\chi}=\chi_r^{(p)},\qquad
\text{for all $r\geq0$}.
$$
Indeed, evaluation at every $s\in\gotC_\gotn$ defines a scalar valued $p$-adic distribution $\mu_{\chi_0,\chi}(s)$
whose associated power series is
$$
\Phi_{\mu_{\chi_0,\chi}(s)}(Z)=\sum_{r=0}^\infty\frac{\chi_r^{(p)}(s)}{r!}Z^r=\chi_0^{(p)}(s)(1+T)^{\chi^{(p)}(s)}
\qquad (T=e^Z-1),
$$
which has bounded coefficients in the variable $T$ 
since our assumption on $p$ implies that $\chi^{(p)}$ takes values in $\Z_p^\times$
(in fact, $\mu_{\chi_0,\chi}(s)$ is simply a scalar multiple of the Dirac measure at $\chi(s)^{(p)}$).
As in the previous proof, this entails that $\mu_{\chi_0,\chi}$ is a measure which in fact is supported on $\Z_p^\times$, i.e
$\mu_{\chi_0,\chi}^\times=\mu_{\chi_0,\chi}$.
\end{rem}


\section{Interpolation}

\subsection{Some periods computations}
For any $f^\De\in M_{0,k,\infty}^\De(N,\vep)$ let 
$F_{f^\De}\in L^2(D_\Q^\times\backslash D_\A^\times,\vep_\A)$
the usual adelic lift of $f^\De$, namely
$F_{f^\De}(d)=f^\De(g_\infty\cdot i)j(g_\infty,i)^{-k}\det(g_\infty)^{k/2}\vep_\A(\hat r)$
if $d=d_\Q g_\infty\hat r$ under \eqref{eq:decompDstar}.
The Lie algebra $\gotg=\gotg\gotl_2\simeq\text{Lie}(D_\infty^\times)$ acts on the $\C$-valued 
$\calC^\infty$ functions on $D_\A^\times$ by
$A\ast\varphi(d)=\left.\frac d{dt}\varphi(de^{tA})\right|_{t=0}$ and the action extends to the complexified 
universal enveloping algebra $\gotA(\gotg)_\C$. Let
$I=\smallmat1001$, $H=\smallmat0{-i}i0$, $X^\pm=\frac12\smallmat1{\pm i}{\pm i}{-1}$
be the standard eigenbasis of $\gotg_{\C}$ for the adjoint action of the maximal compact subgroup
$\SO(2)=\left\{\hbox{$r_\th=\smallmat{\cos\th}{-\sin\th}{\sin\th}{\cos\th}$ such that $\th\in\R$}\right\}$.
Since ${\rm Ad}(r_\th)X^\pm=e^{\mp 2i\th}X^\pm$, the function $X^\pm\ast\phi_F$ is a lift from 
$M_{0,k\pm2,\infty}^\De(N,\vep)$. A standard computation (e.g. \cite[\S\S2.1--2]{Bump96})
links the Lie action to the archimedean Maass operators of \S\ref{diffop}, namely
$X^+\ast F_{f^\De}=-4\pi\phi_{\de_{2\ka}{f^\De}}$.
For $r\geq0$ let
\begin{equation}\label{def:phir}
F_{f^\De}^{(r)}=\left(-\frac{1}{4\pi}X^{+}\right)^{r}\ast F_{f^\De}=F_{\de_{2\ka}^{(r)}f^\De}.
\end{equation}
\begin{dfn}\label{th:Jintegral}
	\rm 
	Let $f^\De\in M_{0,2\ka}^\De(N,\vep)$, 
	$\rho\in\Hom^\sharp(K,D)$ of conductor $c$ with fixed point
	$\tau\in\CM_{\De,K}$ and $\xi$ a character of $K_\A^\times$ trivial on 
	$K^\times\R^\times$. For each $r\geq0$ let
	$$
	J_r(f,\xi,\tau)=\int_{K_\A^\times/K^\times\R^\times}F_{f^\De}^{(r)}(\rho(t)d^\tau_\infty)\xi(t)\,dt
	$$
	where $dt$ is the Haar measure on $K_\A^\times$ whose archimedean component 
	is normalized so that $\text{vol}(\C^\times/\R^\times)=\pi$ and such that the local groups of units 
	have volume 1. 
\end{dfn}


Note that $J_r(f,\xi,\tau)=0$ when  $\chi\xi_{\left|U\right.}\neq1$ where $U$ is the largest subgroup of finite ideles
in $K_A^\times$ such that $F_{f^\De}^{(r)}(\rho(t))$ is right $\chi$-invariant for some character $\chi$ of $U$.
 We shall implicitly assume that  $\hat\calO_{K,c}^\times=U$
The following proposition relates $J_r(f,\xi,\tau)$ to the pairing \eqref{eq:pairinginf}.

\begin{pro}\label{pr:Jasscpr}
In the situation of definition \ref{th:Jintegral} assume $\xi\in\arH_\uw(c,\vep_\rho^{-1})$
with $\uw=(w,-w)$ and that $(\calO_{K,c}^\times)^{2(\ka+r)}=1$. Then
$$
J_r(f,\xi,\tau)=2\pi\frac{\mathrm{vol}(\hat\calO_{K,c}^\times)h_c^\sharp}{\vass{\calO_{K,c}^\times}}
\frac1{(\mathrm{Im}(\tau)\Om_\infty(\tau)^2)^w}
\langle\vth^{(\infty)}(r,f,x),\xi\vvass{\text{N}_{K/\Q}}^{-w}\rangle_\infty
$$
\end{pro}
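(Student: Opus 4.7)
The plan is to unfold the integral $J_r(f,\xi,\tau)$ over the double quotient $K_\A^\times/K^\times\R^\times$ using a set of $p$-normalized representatives $\{t_s\}_{s\in\CC_c^\sharp}$ and to match the terms one by one against the claimed right-hand side. Concretely, since $K^\times\cap\C^\times\widehat\calO_{K,c}^\times=\calO_{K,c}^\times$ and $\calO_{K,c}^\times\cap\R^\times=\{\pm1\}$, a standard Fubini argument gives
$$
J_r(f,\xi,\tau)=\sum_{s\in\CC_c^\sharp}\frac{1}{|\calO_{K,c}^\times|/2}\int_{\widehat\calO_{K,c}^\times}\int_{\C^\times/\R^\times}F_{f^\De}^{(r)}(\rho_\A(t_szu)d_\infty^\tau)\xi(t_szu)\,d^\times z\,d^\times u,
$$
with the convention $\text{vol}(\C^\times/\R^\times)=\pi$. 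The extra $2$ in $|\calO_{K,c}^\times|/2$ is exactly what produces the $2\pi$ in the statement.

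Next, I would exploit the commutation $\rho_\A(t_szu)d_\infty^\tau=\rho_\A(t_s)\,d_\infty^\tau\,\rho_\f(u)\,k_z$, where $k_z=(d_\infty^\tau)^{-1}\rho_\infty(z)d_\infty^\tau\in\text{Stab}(i)\subset\GL_2^+(\R)$ has $\det(k_z)=z\bar z$ and $j(k_z,i)=\sigma(z)$ for the archimedean embedding $\sigma:K\hmap\C$ singled out by the normalisation of $\rho$. Right $\widehat\calR_N^\times$-equivariance of the adelic lift gives a factor $\vep_\A(\rho_\f(u))=\vep_\rho(u)$, while the action of $k_z$ produces $j(k_z,i)^{-2(\ka+r)}\det(k_z)^{\ka+r}$. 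Combined with $\xi(t_szu)=\xi(t_s)z^{-\uw}\vep_\rho^{-1}(u)$, the nebentypus cancels, the $u$-integral yields $\text{vol}(\widehat\calO_{K,c}^\times)$, and the residual $\th$-dependent phase forces the orthogonality condition $w=-(\ka+r)$ — precisely the one implicit in the weight-matching condition $\uw_1+\uw_2=\underline 0$ built into $\langle\cdot,\cdot\rangle_\infty$. The $z$-integral then contributes the factor $\pi$.

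The next step is to translate $F_{f^\De}^{(r)}(\rho_\A(t_s)d_\infty^\tau)$ into a value of $\vth^{(\infty)}(r,f,x)$. Writing $\rho_\A(t_s)=d_{0,s}g_{s,0}\hat r_s$ under \eqref{eq:decompDstar}, the defining formula of the lift together with $j(d_\infty^\tau,i)=y^{-1/2}$ and $\det(d_\infty^\tau)=1$ gives
$$
F_{f^\De}^{(r)}(\rho_\A(t_s)d_\infty^\tau)=\de_{2\ka}^{(r)}f^\De(g_{s,0}\tau)\,j(g_{s,0},\tau)^{-2(\ka+r)}\,y^{\ka+r}\,\det(g_{s,0})^{\ka+r}\,\vep_\A(\hat r_s).
$$
Substituting $\de_{2\ka}^{(r)}f^\De(g_{s,0}\tau)\vep_\A(\hat r_s)=\vth^{(\infty)}(r,f,x)(t_s)\cdot\bigl(j(g_{s,0},\tau)\Om_\infty\bigr)^{2(\ka+r)}$ from the identification $\text{per}_\infty(\om_{t_s},g_{s,0}\tau)=j(g_{s,0},\tau)\Om_\infty$ proved via \eqref{eq:Omrelation}, the automorphy factor $j(g_{s,0},\tau)$ drops out. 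To finish it remains to identify $\det(g_{s,0})^{\ka+r}$ with $\vvass{N_{K/\Q}(t_s)}^{\ka+r}=\vvass{N_{K/\Q}(t_s)}^{-w}$, which follows from the reduced-norm identity $\nu(\rho_\A(t_s))=N_{K/\Q}(t_s)$, the fact that $\nu(\hat r_s)\in\widehat\Z^\times$ and $\nu(d_{0,s})\in\Q^\times$, and the product formula. Assembling the factors rebuilds the pairing $\langle\vth^{(\infty)}(r,f,x),\xi\vvass{N_{K/\Q}}^{-w}\rangle_\infty$, with prefactor $2\pi\,\text{vol}(\widehat\calO_{K,c}^\times)h_c^\sharp/|\calO_{K,c}^\times|$ and denominator $(y\Om_\infty^2)^w$.

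The main obstacle is the purely bookkeeping one of keeping all the transformation laws consistent: in particular the correct sign of $w$ depends on which of the two embeddings $K\hmap\C$ is induced by the normalisation of $\rho$, and the factor of $2$ in $2\pi$ is easy to miscount unless the overlap $\R^\times\cap\calO_{K,c}^\times=\{\pm1\}$ is tracked carefully when forming the double fundamental domain. Once these normalisations are fixed, the computation is a direct unfolding.
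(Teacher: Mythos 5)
Your proof follows essentially the same route as the paper's: unfold $J_r$ over representatives of $\CC_c^\sharp$, factor off the volume of $\widehat\calO_{K,c}^\times$, reduce the archimedean piece to a circle integral whose orthogonality forces $w=-(\ka+r)$, and then rewrite $F_{f^\De}^{(r)}(\rho_\A(t_s)d_\infty^\tau)$ in terms of $\vth^{(\infty)}(r,f,x)(t_s)$ via the decomposition \eqref{eq:decompDstar}, the relation $\perr(\om_{t_s},g_s\tau)=j(g_s,\tau)\Om_\infty(\tau)$ from \eqref{eq:Omrelation}, and $\det(g_s)=\vvass{\text{N}_{K/\Q}(t_s)}$. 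The only cosmetic difference is that you carry out the $\widehat\calO_{K,c}^\times$-integral explicitly to see the nebentypus cancellation, where the paper simply opens with the observation that the integrand is right $\widehat\calO_{K,c}^\times$-invariant; and you package the factor $2\pi/\vass{\calO_{K,c}^\times}$ as $\pi\cdot(\vass{\calO_{K,c}^\times}/2)^{-1}$ where the paper writes it as $\vass{\calO_{K,c}^\times}^{-1}\int_0^{2\pi}d\th$ — the same bookkeeping.
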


\begin{proof}
The integrand is right $\hat\calO_{K,c}^\times$-invariant. We can write
$K_A^\times/K^\times\R^\times\hat\calO_{K,c}^\times=
\bigcup_{s\in\CC_c^\sharp}t_s\C^\times/\R^\times\calO_{K,c}^\times$ 
as a disjoint union for some set of representatives 
$$
J_r(f,\xi,\tau)=\mathrm{vol}(\hat\calO_{K,c}^\times)\sum_{s\in\CC_c^\sharp}\xi(t_s)
\int_{\C^\times/\R^\times\calO_{K,c}^\times}
F_{f^\De}^{(r)}(\rho(t_sz)d^\tau_\infty)\xi_\infty(z)\,d^\times z.
$$
To compute each summand write $z=re^{i\th}$ and 
$\rho_\infty(z)=rd_\infty^\tau r_\th(d_\infty^\tau)^{-1}$ so that
$$
\int_{\C^\times/R^\times\calO_{K,c}^\times}F_{f^\De}^{(r)}(\rho(t_sz)d^\tau_\infty)\xi_\infty(z)\,d^\times z=
\frac{F_{f^\De}^{(r)}(\rho(t_s)d^\tau_\infty)}{\vass{\calO_{K,c}^\times}}
\int_0^{2\pi}e^{-2i(\ka+r+w)\th}\,d\th.
$$ 
Thus $J_r(f,\xi,\tau)=0$ if $w\neq-\ka-r$ proving the formula in this case since also the scalar product 
vanishes due to the incompatibility of weights.
To finish the proof, write $t_s=d_sg_su_s$ under \eqref{eq:decompDstar}.
Then, under the hypotheses that $t_s$ is normalized and  $(\calO_{K,c}^\times)^{2{\ka+r}}=1$,
the first identity in \eqref{eq:Omrelation} implies that
$$
\xi(t_s)F_{f^\De}^{(r)}(t_sd^\tau_\infty)=
\mathrm{Im}(\tau)^{\ka+r}\Om_\infty(\tau)^{2(\ka+r)}\vth^{(\infty)}(r,f^\De,x)\xi(t_s)
\vvass{\text{N}_{K/\Q}(t_s)}^{\ka+r}
$$
since $\det(g_s)=\vvass{\text{N}_{K/\Q}(t_s)}$.
\end{proof}

We shall henceforth assume that $\xi$ satisfies the hypothesis in proposition \ref{pr:Jasscpr}. Denote $\gotp^s$ 
the conductor of the local component $\xi_\gotp$. For the next proposition assume also that the CM point $\tau$ 
represents a point $x\in\XarD(Mp^\nu)(\calO_{v_\nu})$ in a $p$-ordinary split arithmetic test object, with $(M,p)=1$.

\begin{pro}
Suppose that $(p,c)=1$.
Then, there exists a uniformizer $\vpi_\gotp$ of $\gotp\calO_\gotp$, well-defined up to 
$(1+\gotp^s)$-association such that
$$
J_r(Vf^\De,\xi,\tau)=p^{-2\ka}\xi_\gotp(\vpi_\gotp)J_r(f^\De,\xi,\tau).
$$
\end{pro}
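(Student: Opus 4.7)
The plan is to realize the classical $V$-operator, applied to $f^\De$, as right translation by a specific element of $D_p^\times$ on the adelic lift $F_{f^\De}$, and then absorb that translation into a shift of the variable $t\in K_\A^\times$ via the embedding $\rho$. Concretely, I would first establish the adelic identity
\[
F_{Vf^\De}(d)=p^{-2\ka}\,F_{f^\De}(d\cdot\zeta_p),
\]
where $\zeta_p\in D_p^\times$ is concentrated at $p$ and, under the local identification $\Phi_p\colon D_p\iisom\M_2(\Q_p)$ (available because $p$ splits in $K$ and $K$ splits $D$), corresponds to $\smallmat{p^{-1}}001$ up to orientation. The normalization constant $p^{-2\ka}$ arises from re-decomposing $d\,\zeta_p$ in the standard form $d_\Q\,g_\infty\,\hat r$: a central rational scalar must be pulled out to put $\zeta_p$ back into $\widehat\calR_N^\times$, and the weight factor $\det(g_\infty)^{\ka}$ in the definition of the adelic lift combined with the nebentypus contribution at $p$ assemble to this exact power of $p$.

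Next, since $F_{f^\De}^{(r)}=(-X^+/4\pi)^r\ast F_{f^\De}$ by \eqref{def:phir}, and the Lie-algebra element $X^+$ acts at $\infty$ and commutes with right translation by $\zeta_p$ at $p$, the relation propagates to the nearly holomorphic lift: $F_{Vf^\De}^{(r)}(d)=p^{-2\ka}F_{f^\De}^{(r)}(d\,\zeta_p)$. The hypothesis $(p,c)=1$ together with the splitting of $p$ in $K$ forces the local embedding $\rho_p\colon K_\gotp\oplus K_{\bar\gotp}\simeq\Q_p\oplus\Q_p\hookrightarrow\M_2(\Q_p)$ to be, after inner conjugation, the diagonal embedding of the maximal order; hence $\zeta_p$ itself is in the image of $\rho$. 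Writing $\zeta_p=\rho(\tilde\vpi_p)$ for an idele $\tilde\vpi_p\in K_\A^\times$ whose $\gotp$-component is $\vpi_\gotp^{-1}$ for a uniformizer $\vpi_\gotp$ of $\calO_\gotp$ and which is trivial at every other place of $K$, we obtain, since $\tilde\vpi_p$ and $d_\infty^\tau$ live at disjoint places,
\[
F_{Vf^\De}^{(r)}(\rho(t)d_\infty^\tau)=p^{-2\ka}F_{f^\De}^{(r)}(\rho(t\tilde\vpi_p)d_\infty^\tau).
\]

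Substituting this into the definition of $J_r(Vf^\De,\xi,\tau)$ and making the change of variable $u=t\tilde\vpi_p$ yields, by translation invariance of $dt$ on $K_\A^\times/K^\times\R^\times$ and multiplicativity of $\xi$,
\[
J_r(Vf^\De,\xi,\tau)=p^{-2\ka}\int F_{f^\De}^{(r)}(\rho(u)d_\infty^\tau)\,\xi(u\tilde\vpi_p^{-1})\,du=p^{-2\ka}\xi_\gotp(\vpi_\gotp)\,J_r(f^\De,\xi,\tau),
\]
where the last equality uses that every local component of $\tilde\vpi_p$ outside $\gotp$ is trivial. Two uniformizers of $\gotp$ differ by an element of $\calO_\gotp^\times$, and since $\xi_\gotp$ has conductor $\gotp^s$, the value $\xi_\gotp(\vpi_\gotp)$ depends only on the class of $\vpi_\gotp$ modulo $1+\gotp^s$; this is exactly the asserted $(1+\gotp^s)$-ambiguity.

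The main obstacle I expect is the first step: pinning down the adelic right-translation description of $V$ with the precise scalar $p^{-2\ka}$. This requires carefully unwinding the normalizations in the definition of $F_{f^\De}$ — the weight factor $\det(g_\infty)^{\ka}$, the nebentypus transformation $\vep_\A$ on $\widehat\calR_N^\times$, and the Iwasawa-type re-decomposition of $d\,\zeta_p$ at the prime $p$. The hypothesis $(p,c)=1$ is critical here because it guarantees $\rho((\calO_{K,c})_p)=\rho((\calO_K)_p)$, ensuring that $\zeta_p$ can be expressed as the image of a local idele under $\rho$ and that it interacts cleanly with the $p$-part of the Eichler order $\widehat\calR_{N,p}^\times$.
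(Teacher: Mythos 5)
Your route is genuinely different from the paper's: you realize $V$ adelically as right translation by an element $\zeta_p\in D_p^\times$ and then try to absorb it into $\rho(K_\A^\times)$, whereas the paper argues modularly at the CM points, interpreting $V$ through test objects and identifying its effect on the fiber of ${\rm Im}(\bar\rho_\A)$ with the Galois action of the Frobenius $\si_\gotp=(\gotp,H_c/K)$ via Shimura reciprocity and the theory of complex multiplication, the uniformizer being pinned down by the condition that $pP\mapsto P^\prime$ on division points. The adelic formulation does buy you something (uniformity in $t$ is automatic, whereas the paper must argue separately that one uniformizer works on the whole fiber), but as written your argument has a genuine gap at its key step. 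From the fact that $\rho_p$ is conjugate to the diagonal embedding you conclude that ``$\zeta_p$ itself is in the image of $\rho$''; this does not follow, since the inner conjugation that diagonalizes $\rho_p$ moves $\zeta_p$ as well. What is true is only that $\zeta_p\in u_1\,\rho(K_p^\times)\,u_2$ with $u_1,u_2$ in the local compact open group, and the integrand is invariant under that group only up to the nebentypus/orientation character. Consequently your computation determines the translating idele only up to (i) a unit whose character value is unknown, and (ii) the choice between $\gotp$ and $\bar\gotp$: the element $\smallmat{p^{-1}}001$ could just as well correspond to $(1,\vpi_{\bar\gotp}^{-1})$ as to $(\vpi_\gotp^{-1},1)$ under $K_p\simeq K_\gotp\times K_{\bar\gotp}$, and $\xi_\gotp(\vpi_\gotp)$ and $\xi_{\bar\gotp}(\vpi_{\bar\gotp})$ are different numbers. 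Deciding that it is the prime $\gotp$ singled out by the place $v$ (equivalently by the arithmetic $\mu_{p^\nu}$-structure, whose reduction is the kernel of Frobenius) and fixing the unit class of $\vpi_\gotp$ is precisely the substantive content of the proposition --- it is what produces the correct Euler factor in the main theorem --- and it is exactly what the paper's CM-theoretic argument supplies; in your write-up it is waved through with ``up to orientation''. Note also that in the relevant application $p$ divides the level $Mp^\nu$, so the local order at $p$ is an oriented Eichler order, not the maximal order, and the interaction of $\zeta_p$ with the $p$-orientation cannot be sidestepped.

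Two smaller points. First, your closing remark inverts the logic of the ambiguity: the issue is not that $\xi_\gotp(\vpi_\gotp)$ only depends on $\vpi_\gotp$ modulo $1+\gotp^s$ (changing $\vpi_\gotp$ by a general unit does change the value), but that the construction determines $\vpi_\gotp$ only up to units killed by the orientation character, which under the compatibility with the nebentypus coincides with the kernel of $\xi_\gotp$ on units; that is why the right-hand side is well defined. Second, the normalization constant in your first step is asserted rather than computed: with the unitary lift $F_{f^\De}$ the re-decomposition of $d\,\zeta_p$ rescales $\det(g_\infty)^{\ka}$ by a power of $p$, and you must check carefully that this bookkeeping reproduces the lattice-theoretic normalization $Vf^\De(P,\La)=p^{-2\ka}f^\De(P^\prime,\La^\prime)$ used in the paper; this is exactly the kind of place where a stray factor $p^{\ka}$ can appear, and you flag it yourself but do not carry it out.
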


\begin{proof}
Over $\C$, where modular forms can be regarded as homogeneous functions of lattices and level structures,
$Vf^\De(P,\La)=p^{-2\ka}f^\De(P^\prime,\La^\prime)$ where $P$ is a point of period 
$Mp^{\nu+1}$ in $A=\C^\vep/\La$, $\La^\prime$ is the lattice ($\calR$-module when $\De>1$) generated by 
$\La$ and $\bar P=Mp^\nu P$, and $P^\prime$ the image of $P$ in 
$A^\prime=\C^\vep/\La^\prime$ under the natural quotient map (clearly a point of period $Mp^\nu$ there).

In our situation of arithmetic structures, 
the group ($\calR$-module) generated by $\bar P$ in $A_\tau=\C^\vep/\La_\tau$
is the base change to $\C$ of $\eta_p(\mu_p^\vep)$. Thus, if $\si_\gotp=(\gotp,H_c/K)$ is the 
Frobenius at $\gotp$, 
$\C^\vep/\La_\tau^\prime=A_x^{\si_\gotp}(\C)$ and the natural quotient map
induced by the inclusion $\La_\tau\subset\La_\tau^\prime$ is a lifting to $\C$ of the Frobenius map 
between the varieties reduced $\bmod\gotp$.

Since $(p,c)=1$, we can write $\si_\gotp=[\widetilde\varpi_\gotp,K]_{\left|H_c\right.}$ where 
$\widetilde\varpi_\gotp\in K_\A^\times$ has component a uniformizer $\varpi_\gotp$ at $\gotp$ and $1$ elsewhere. 
Again by Shimura reciprocity we may take $\La_\tau^\prime=\widetilde\varpi_\gotp^{-1}\La_\tau$ so that $\bar P$ 
is a generator  (as a group or as $\calR$-module) of 
$\widetilde\varpi_\gotp^{-1}\La_\tau/\La_\tau$ and we also know that the action of 
$[\widetilde\varpi_\gotp,K]$ on division points is given by the \lq\lq multiplication by $\widetilde\varpi_\gotp^{-1}$\rq\rq map
\begin{equation}
\label{eq:galdiv}
\La_\tau\otimes\Q/\La_\tau\stackrel{\widetilde\varpi_\gotp^{-1}}\mmap
\La_\tau\otimes\Q/\widetilde\varpi_\gotp^{-1}\Lambda_\tau\hookrightarrow A_x^{\si_p}(\C),
\end{equation}
which is actually defined locally.  Forgetting one $p$-level in the structure simply amounts to moving from 
$(P,\La)$ to $(pP,\Lambda)$ and so choose $\varpi_\gotp$ so that $pP\mapsto P^\prime$ under \eqref{eq:galdiv}. 
This choice of local uniformizer is well-defined only up to a local unit in the stabilizer of $pP$ 
(i.e. the kernel of the $p$-orientations, \S\ref{backMSC}), and for our purposes up to a local unit in the kernel of 
$\tilde\xi_\gotp$ as this coincides with the kernel of the local component of the character induced by the nebentypus.
It follows immediately from the commutativity of the relevant Galois groups that the same uniformizer $\varpi_\gotp$ 
works also replacing $x$ with any other point in in the fiber of ${\rm Im}(\bar\rho_\A)\subset X_0^\De(Mp^\nu)$, i.e. on this
fiber the modular definition of $p^{2\ka}V$corresponds to translation by $\widetilde\varpi_\gotp^{-1}\in K_\A^\times$.
Thus
$$
F_{Vf^\De}^{(r)}(\rho(t)d^\tau_\infty)=p^{-2\ka}F_{f^\De}^{(r)}(\rho(t\widetilde\varpi_\gotp^{-1})d^\tau_\infty)
$$
and the formula now follows readily from an integration by substitution.
\end{proof}

\begin{cor}\label{eulerfact}
Suppose in addition that $f^\De$ is a newform with $T_p(f^\De)=a_pf^\De$. Then
$$
J_r((1-VU)f^\De,\xi,\tau)=
(1-a_p\xi_\gotp(\vpi_\gotp)p^{-2\ka}+\vep(p)\xi_\gotp(\vpi_\gotp)^2p^{-2\ka-1})J_r(f^\De,\xi,\tau).
$$
\end{cor}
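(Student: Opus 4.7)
The plan is to deduce this corollary directly from the preceding proposition (which handles $V$) combined with the classical Hecke relation at $p$. Since the ingredients are in place, the argument is algebraic.

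First I would record the Hecke identity. Since $f^\De$ is a newform of weight $2\ka$ and nebentypus $\vep$ with $T_p f^\De = a_p f^\De$, the classical decomposition
$$T_p = U + \vep(p)\, p^{2\ka-1}\, V$$
(valid in our normalization, with the convention that $\vep(p)=0$ when $p$ divides the conductor of $\vep$) yields
$$U f^\De = a_p f^\De - \vep(p)\, p^{2\ka-1}\, V f^\De.$$
Applying $V$ and rearranging gives the key identity
$$(1 - VU) f^\De = f^\De - a_p\, V f^\De + \vep(p)\, p^{2\ka-1}\, V^2 f^\De.$$

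Next I would apply the functional $J_r(-,\xi,\tau)$, which is linear in its first argument, and invoke the preceding proposition. The first application gives
$$J_r(V f^\De,\xi,\tau) = p^{-2\ka}\, \xi_\gotp(\vpi_\gotp)\, J_r(f^\De,\xi,\tau).$$
Since $V f^\De$ lies in $M_{0,2\ka}^\De(Mp^{\nu+1},\vep)$, and the same CM point $x$ sits (via the forgetful map $\XarD(Mp^{\nu+1})\to\XarD(Mp^\nu)$) in a compatible $p$-ordinary split arithmetic test object at the higher level, the proposition applies a second time to $V f^\De$. Crucially, the uniformizer $\vpi_\gotp$ produced by the proposition depends only on $\xi_\gotp$ (through its conductor $\gotp^s$), not on the modular form being acted upon, so the same $\vpi_\gotp$ works in both iterations. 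This gives
$$J_r(V^2 f^\De,\xi,\tau) = p^{-4\ka}\, \xi_\gotp(\vpi_\gotp)^2\, J_r(f^\De,\xi,\tau).$$

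Assembling these identities,
$$J_r((1-VU)f^\De,\xi,\tau) = \bigl(1 - a_p\, p^{-2\ka} \xi_\gotp(\vpi_\gotp) + \vep(p)\, p^{2\ka-1}\cdot p^{-4\ka}\, \xi_\gotp(\vpi_\gotp)^2\bigr) J_r(f^\De,\xi,\tau),$$
and since $2\ka - 1 - 4\ka = -2\ka - 1$, this is exactly the stated Euler factor. The only point requiring any real care is the justification of the second application of the preceding proposition (to $V f^\De$ at level $Mp^{\nu+1}$) with the same uniformizer $\vpi_\gotp$; this reduces to observing that the proof of that proposition chose $\vpi_\gotp$ via the Galois action on $p$-power division points through $\bar\rho_\A$, an action determined entirely by $\xi_\gotp$ modulo $1+\gotp^s$, and therefore independent of the level at which $f^\De$ is regarded.
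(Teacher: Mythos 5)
Your proof is correct and matches the paper's argument: the paper likewise derives the Euler factor by expanding $(1-VU)f^\De = f^\De - a_pVf^\De + \vep(p)p^{2\ka-1}V^2f^\De$ from $T_p = U + \vep(p)p^{2\ka-1}V$ and applying the preceding proposition to $Vf^\De$ and $V^2f^\De$. Your extra remark justifying that the same uniformizer $\vpi_\gotp$ serves in both applications of the proposition is a sound point that the paper leaves implicit.
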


\begin{proof}
This follows immediately form the previous proposition recalling that $T_p=U+\vep(p)p^{2\ka-1}V$.
\end{proof}

\subsection{Central $L$-values}\label{sec:Lvalues}
Let $\GO(Q)$ denote the algebraic group of similitudes of the quadratic space $Q$.
When $Q=D$ with respect to the reduced norm $\nu$ 
the group $\GO(D)$ is completely described by the short exact sequence
\begin{equation}\label{eq:GODstr}
1\mmap\G_m\mmap D^\times\times D^\times\stackrel\varrho\mmap\GO(D)^\circ\mmap1,\quad
\varrho(x,y)(d)=xd\inv y,
\end{equation}
together with $\GO(D)=\GO(D)^o\ltimes\langle\mathbf{t}\rangle$, where $\mathbf{t}(d)=\bar d$, e.g.
\cite[\S1.1]{Harris93},  \cite[\S7]{HaKu91}. The decomposition $D=K\oplus Ku$ associated to  
$\rho\in\Hom^\sharp(K,D)$ is orthogonal and the norm $\nu$ restricts to $\text{N}_{K/\Q}$
and $-u^2\text{N}_{K/\Q}$ on $K$ and $Ku$ respectively. 
Let $T=\text{G}(\text{O}(K)\times\text{O}(Ku))<\GO(D)$ be the subgroup preserving the splitting.
Left multiplication gives isomorphisms
$\GO(K)^\circ\simeq R_{K/\Q}\G_{m,K}\simeq\GO(Ku)^\circ$ and there is an identification
$$
T^\circ=\left\{
\text{$(k_1,k_2)\in R_{K/\Q}\G_{m,K}^2$ such that $\text{N}_{K/\Q}(k_1\inv k_2)=1$}\right\}.
$$
Also, there is a commutative diagram
\begin{equation}
\label{eq:GOD}
\xymatrix{
K^\times\times K^\times\ar[r]^\al\ar[d]_{\rho\times\rho} & T^\circ\ar[d] \\
D^\times\times D^\times\ar[r]^\varrho & \GO(D)^\circ}
\end{equation}
where $\al(k_1,k_2)=(k_1\inv k_2,k_1\inv{\bar k}_2)$.

Let $\calS(D)_\A=\bigotimes_{\ell\leq\infty}\calS(D)_\ell$ denote the space of adelic 
Schwartz-Bruhat functions whose archimedean component consists of classical 
Schwartz functions on $D_\infty$ which are 
$K^1_\infty\times K^1_\infty$-finite under the action of $\varrho$, where 
$K^1_\infty$ is the maximal compact 
subgroup of $\rho(K\otimes\R)\subset D_\infty^\times\simeq\GL_2(\R)$.
The space $\calS(D)_\A$ supports the representation $\tilde r_\psi$ of the adelic points of the group  
$$
R(D)=\left\{(g,h)\in\GL_2\times\GO(D)\ \mid\ \det(g)=\nu_{0}(h)\right\}
\simeq\SL_2\ltimes\GO(D)
$$
defined to be
$$
\tilde r_{\psi}(g,h)\varphi=r_\psi\left(g\smallmat{1}{}{}{\nu_{0}(h)}^{-1}\right)L(h)\varphi=
L(h)r_\psi\left(\smallmat{1}{}{}{\nu_{0}(h)}^{-1}g\right)\varphi
$$
where $r_\psi$ is the Weil representation of $SL_2(\A)$ and $L$ the unitary representation 
of $\GO(D)(\A)$ given by $L(h)\varphi(x)=\vvass{\nu_o(h)}^{-1}_\A\varphi(\inv{h}x)$ for all $x\in D_\A$,
\cite[\S5]{HaKu92}.
The group $R(D)$  is naturally a subgroup of $\Sp(\PP\otimes D)$, with 
$\PP$ the hyperbolic plane, via $(g,h)x\otimes y=gx\otimes h^{-1}y$.
The groups $(\SL_2,{\rm O}(D))$ form a dual reductive pair in 
$\Sp(\PP\otimes D)$ and the extended Weil representation $\tilde r_{\psi}$ allows to realize the 
theta correspondence between the similitude groups by integrating the theta kernel
$$
\vartheta(g,h;\varphi)=\sum_{d\in D}\tilde r_{\psi}(g,h)\varphi(d)
$$
against automorphic forms. Namely, if  $F$ is a cuspidal automorphic form on $\GL_2$ 
the function defined as
\begin{equation}
     \theta_{\varphi}(F)(h)=\int_{\SL_{2}(\Q)\backslash\SL_{2}(\A)}
     \vartheta(g^{\prime}g,h;\varphi)F(g^{\prime}g)\,dg^{\prime}
     \label{eq:thetalift}
\end{equation}
for any choice of $g\in\GL_2(\A)$ such that $(g,h)\in R(D)_\A$ is an automorphic form on $\GO(D)$. 
Denote $\widetilde\theta_{\varphi}(F)$ the automorphic form on $D_\A^\times\times D_\A^\times$ obtained from
$\theta_{\varphi}(F)$ by pull-back along the map $\varrho$ in \eqref{eq:GODstr}.
For later use, note that a straightforward substitution yields the formula
\begin{equation}
     \label{eq:substform}
     \theta_\varphi(F)(hh_1)=\theta_{\tilde r_\psi(g_1,h_1)\varphi}(\pi(g_1)F),
\end{equation}
valid for all $(g_1,h_1)\in R(D)$.
Likewise, if $G$ is an automorphic form on $\GO(D)$ the function defined on the subgroup
$\GL_2(\A)^+$ consisting of the $g\in\GL_2(\A)$ such that $(g,h)\in R(D)_\A$ for some $h\in\GO(V)(\A)$  as
\begin{equation}
    \theta_\varphi^t(G)(g)=\int_{\text{O}(V)(\Q)\backslash\text{O}(V)(\A)}
    \vartheta(g,h^\prime h;\varphi)G(h^{\prime}h)\,dh^{\prime}
    \label{eq:thetaliftt}
\end{equation}
and  extended first to $\GL_2(\Q)\GL_2(\A)^+$ by left $\GL_2(\Q)$-invariance and set equal to $0$ outside 
is an automorphic form on $\GL_2(\A)$.

Let $f^\De\in M_{0,2\ka}^\De(N,\vep)$ be a newform with associated automorphic 
representation $\pi^D$ of $D_\A^\times$ with central character $\vep$. 
Denote $\pi=\textrm{JL}(\pi^D)$ the automorphic representation of $\GL_2(\A)$ corresponding to 
$\pi^D$ under the Jacquet-Langlands correspondence.
Let $\widetilde{\Theta}(\pi)=\{\widetilde\theta_{\varphi}(F)\}_{F\in\pi,\varphi\in\calS(D)_\A}$ 
be the space of automorphic forms on $D^{\times}\times D^{\times}$ which are 
pull-backs of theta lifts \eqref{eq:thetalift} from $\pi$. As pointed out in \cite{Harris93, HaKu91}
the work of Shimizu \cite{Shimi72} yields
\begin{equation}
\widetilde{\Theta}(\pi)=\pi^{D}\otimes\check{\pi}^{D},
\label{thm:Shimizu}
\end{equation}
where $\check{\pi}^{D}\simeq\pi^D\otimes(\inv\vep\circ\nu)$ is the contragredient 
representation of $\pi^{D}$. 
If $F$ and $\varphi$ are chosen so that
$\widetilde\theta_{\varphi}(F)=\pi(d^\tau_\infty)F^{(r)}_{f^\De}\otimes\pi(d^\tau_\infty)F^{(r)}_{f^\De}\cdot(\inv\vep\circ\nu)$ then
\begin{equation}
\label{eq:Lsquare}
L_{\underline\xi}(\widetilde\theta_{\varphi}(F)):=
\int_{(K^\times\R^\times\backslash K_\A^\times)^2}\widetilde\theta_{\varphi}(F)\underline\xi(t)\,dt
=J_r(f^\De,\xi,\tau)^2,
\end{equation}
where the pair $\underline\xi=(\xi,\xi(\vep\circ N))$ can be thought of as a character of
$K_\A^\times\times K_\A^\times$. 
On the other hand, Harris and Kudla observe that for all $F$ and $\varphi$ the integral 
$L_{\underline\xi}(\widetilde\theta_{\varphi}(F))$ can be read, via the map 
$\al$ in \eqref{eq:GOD} as the Petersson scalar product of two automorphic forms on the group
$T$, namely 
$L_{\underline\xi}(\widetilde\theta_{\varphi}(F))=
\int_{T(\Q)T(\R)\backslash T(\A)}\widetilde\theta_{\varphi}(F)(a,b)\xi(b)\,d^\times ad^\times b$,
where $\al(t)=(a,b)$.
Thus, the seesaw identity \cite[1.14]{Kudla84} associated with the seesaw dual pair
$$
\xymatrix{
\GL_2\times\GL_2\ar@{-}[dr] & \GO(D)\ar@{-}[dl] \\
\GL_2\ar[u] & T\ar[u]}
$$
identifies, up to a renormalization of the Haar measures, 
the value $L_{\underline\xi}(\widetilde{\theta}_{\varphi}(F))$ with a scalar 
product on $\GL_{2}$,
\begin{equation}
	L_{\underline{\xi}}(\widetilde{\theta}_{\varphi}(F))=
	\int_{\GL_{2}(\Q)\A^{\times}\backslash\GL_{2}(\A)}
	F(g)\theta_{\varphi}^t(1,\xi)(g,g)\,dg,
	\label{eq:RankinSelberg}
\end{equation}
where $\theta_{\varphi}^t(1,\xi)$ denotes the theta lift from $T$ to $\GL_{2}\times\GL_{2}$
which is defined in a fashion similar to \eqref{eq:thetalift}.
If $\varphi=\varphi_{1}\otimes\varphi_{2}$ is split  under a decomposition
$D(\A)=(K\oplus K^\perp)\otimes\A$ induced from an embedding $\rho:K\hookrightarrow D$
then $\theta_{\varphi}^t(1,\xi)$ is a product of two separate lifts:
$\theta_{\varphi}^t(1,\xi)(g,g)=E(0,\Phi,g)\theta_{\varphi_{2}}(\xi)(g)$ where 
\begin{itemize}
  \item $\theta_{\varphi_{2}}(\xi)(g)=\int_{K^{(1)}\backslash K_\A^{(1)}}\vartheta(g,kk^\prime;\varphi_2)
           \xi(kk^\prime)\,dk^\prime$ (for all $g\in\GL_2$ and $k\in K^\times_\A$ with $Nk=\det g$) 
           is the binary form in the automorphic representation 
           $\pi(\xi)$ of $\GL_{2}$ attached to $\xi$;
  \item $E(0,\Phi,g)$ is the value at $s=0$ of the holomorphic Eisenstein series attached to the unique 
           flat section (\cite[\S3.7]{Bump96}) extending the function $\Phi(g)=r_\psi(g,k)\varphi_1(0)$ 
           where $k\in\ideles$ is such that $N(k)=\det(g)$ and $r_\psi$ denotes here the extended adelic 
           Weil representation attached to $K$ as a normed space (Siegel-Weil formula).
\end{itemize}
This expression yields a relation between the right hand side 
of \eqref{eq:RankinSelberg} and the value at the centre of symmetry of a
Rankin-Selberg convolution integral. Furthermore, if each Schwartz-Bruhat function is primitive,
i.e. decomposes as a product
of local factors $\varphi_i=\bigotimes_{\ell\leq\infty}\varphi_{\ell,i}$ ($i=1$, $2$), and if
the Whittaker function $W_F$
of $F$ decomposes as a product of local Whittaker functions, 
the Rankin-Selberg integral admits an Euler 
decomposition \cite[\S19]{Ja72}. Precisely (see also \cite[\S 3.2]{Pra06})
$$
L_{\underline\xi}(\widetilde{\theta}_{\varphi}(F))=\prod_{\ell\leq\infty}
\left.L_\ell(\varphi_\ell,\xi_\ell,s)\right|_{s=1/2},
$$
where $L_\ell(\varphi_\ell,\xi_\ell,s)$ is the analytic continuation of
\begin{multline}
\int_{K_\ell}\int_{\Q_\ell^\times}
W^{\psi_\ell}_{F,\ell}\left(\left(\begin{array}{cc}a & 0 \\0 & 1\end{array}\right)k\right)
W^{\psi_\ell}_{\theta_{\varphi_{2},\ell}}\left(\left(\begin{array}{cc}-a & 0 \\0 & 1\end{array}\right)k\right)\cdot\\
\Phi^s_{\varphi_1,\ell}\left(\left(\begin{array}{cc}a & 0 \\0 & 1\end{array}\right)k\right)\inv{\vass{a}}\,
d^\times a\,dk_\ell
\label{eq:localterm}
\end{multline}
with the local measures normalized so that $K_\infty=\SO_2(\R)$ has volume $2\pi$ and
$K_\ell=\GL_2(\Z_\ell)$ has volume $1$ for finite $\ell$, 
$W_{\theta_{\varphi_{2}}}$ is the Whittaker function
and $\Phi^s(g)=\vvass{a}^{s-\frac12}\Phi(g)$ under the $NAK$-decomposition $g=nak$ 
with $\vvass{\smallmat a{}{}b}=\vass{a/b}$.
The local term \eqref{eq:localterm} does not vanish and for almost all $\ell$ can be recognized as a quotient of
Euler factors. Thus one obtains, 
as in \cite[2.3.11]{Harris93} and \cite[\S14]{HaKu91}, a version of Waldspurger's result \cite{Waldsp85}. 
Namely,
\begin{equation}
\label{eq:WHK}
   L_{\underline{\xi}}(\widetilde{\theta}_{\varphi}(F))=\left.
   \Lambda(\varphi,\xi,s)L(\pi_K\otimes\xi,s)L(\eta_K,2s)^{-1}\right|_{s=1/2},
\end{equation}
where $\Lambda(\varphi,\xi,s)$ is a finite product of local integrals, $\pi_K$ is the base change
to $K$ of the automorphic representation $\pi$ and $L(\eta_K,2s)$ is the Dirichlet $L$-function 
attached to $\eta_K$, the quadratic character associated to $K$. Note that the single local terms
\eqref{eq:localterm} do depend on the choice of $u\in K^\perp$.

\subsection{Making the theta correspondence explicit}\label{sec:thexpl}\label{sec:explicit}

Let us fix complex coordinates in $D_\infty$
as follows. The normalized standard embedding 
$\rho^{\rm st}:\Q(\sqrt{-1})\hookrightarrow{\rm M}_2(\Q)$ with 
fixed point $i\in\gotH$ defines a splitting $\Phi_\infty(D_\infty)=\C\oplus\C^\perp$ with
$\C=\R\smallmat1{}{}1\oplus\R\smallmat{}{-1}1{}$ and
$\C^\perp=\C\smallmat1{}{}{-1}$.  Thus, define standard complex coordinates $z_i=z_i^{\rm st}$, $i=1,2$
for $d\in D_\infty$  as 
$$
\Phi_\infty(d)=z^{\rm st}_1(d)+z^{\rm st}_2(d)\smallmat1{}{}{-1}.
$$
We choose $\varphi^\prime=\bigotimes_{v}\varphi^\prime_\ell$ 
with local components given as follows.
\begin{itemize}
  \item $\varphi^\prime_\infty(z_1,z_2)=\frac1\pi z_2^{2\ka}e^{-2\pi(z_1\bar z_1+z_2\bar z_2)}$;
  \item $\varphi^\prime_\ell=\frac1{\textrm{vol}(\calR_{N,\ell}^\times)}\textrm{ch}(\calR_{N,\ell})$
           for all finite $\ell$ such that $\vep_\ell$ is unramified;
  \item for all $\ell\mid N$ such that $\vep_\ell$ is ramified and of conductor $\ell^s$
           let $\varphi^\prime_\ell$ be the function supported on
           $S_\ell=\smallmat{\ell^s\Z_\ell}{\Z_\ell^\times}{\ell^s\Z_\ell}{\ell^s\Z_\ell}$ such that
           $$
           \varphi^\prime_\ell\smallmat{\ell^sa}b{\ell^sc}{\ell^sd}=
           \frac1{\textrm{vol}(\calR_{N,\ell}^\times)}\vep_\ell^{-1}(b),\qquad
           \forall\smallmat{\ell^sa}b{\ell^sc}{\ell^sd}\in S_\ell.
           $$
\end{itemize}
Note that for the archimedean prime and the finite primes for which $\vep$ is unramified, 
our choice of test function $\varphi^\prime_\ell$ coincides with or is an immediate generalization of
that of  Prasanna \cite{Pra06} (itself an adaptation of \cite{Wat03}) where $N$ is assumed square-free 
and $\vep$ unramified.

Let $F\in\pi$ be the lift of the holomorphic newform.

\begin{pro}
    $\widetilde{\th}_{\varphi^\prime}(F)=F^D\otimes\check{F}^D=F^D\otimes F^D(\vep^{-1}\circ\nu)$
    where $F^D$ is an holomorphic eigenform in $\pi^D$ such that 
    $\langle F,F\rangle=\langle F^D,F^D\rangle$.    
\end{pro}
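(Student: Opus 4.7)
The plan is to combine Shimizu's identity \eqref{thm:Shimizu} with a local factorization argument, verifying place by place that the chosen primitive test function $\varphi^\prime=\bigotimes_\ell\varphi^\prime_\ell$ picks out the new vector in each factor, and finally normalizing the tensor by computing a Petersson norm. By \eqref{thm:Shimizu} we already know $\widetilde\theta_{\varphi^\prime}(F)\in\pi^D\otimes\check\pi^D$, and since $\varphi^\prime$ is a pure tensor and the theta correspondence is compatible with restricted tensor product decompositions of $\pi^D$ and $\check\pi^D$, it is enough to check at each place that the local component of the theta lift is a rank-one tensor of new-vector type, transforming by the correct local character.

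At the archimedean place, $\varphi^\prime_\infty(z_1,z_2)=\pi^{-1}z_2^{2\ka}e^{-2\pi(z_1\bar z_1+z_2\bar z_2)}$ is the standard polarized Gaussian: under the action of $L$ restricted to $K^1_\infty\times K^1_\infty\subset\varrho(D_\infty^\times\times D_\infty^\times)$, the factor $z_2^{2\ka}$ forces $\varphi^\prime_\infty$ to be an eigenvector of the prescribed weight in each variable. Since $F_\infty$ is the lowest-weight vector in the discrete series of weight $2\ka$ in $\pi_\infty$, the archimedean integral in \eqref{eq:thetalift} kills everything orthogonal to the holomorphic weight-$2\ka$ vector in the first factor and its antiholomorphic counterpart in the second, yielding the infinity components of $F^D$ and $\check F^D$ up to a scalar.

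At a finite prime $\ell$ with $\vep_\ell$ unramified, the choice $\varphi^\prime_\ell=\mathrm{vol}(\calR_{N,\ell}^\times)^{-1}\mathrm{ch}(\calR_{N,\ell})$ is invariant under $\calR_{N,\ell}^\times\times\calR_{N,\ell}^\times$ acting via $L$, so the local component of the theta lift must be spherical for the Eichler order in each factor; by the new-vector theory for $\pi^D_\ell$ this pins it down to the $\ell$-local component of $F^D\otimes\check F^D$. At a prime $\ell\mid N$ where $\vep_\ell$ has conductor $\ell^s$, the support $S_\ell$ and the twist by $\vep_\ell^{-1}(b)$ have been set up precisely so that $L(u_1,u_2)\varphi^\prime_\ell$ transforms by $\vep_\ell^{-1}(u_1)\vep_\ell(u_2)$ on the appropriate local unit subgroup; this matches the transformation law of $F^D\otimes\check F^D=F^D\otimes F^D\cdot(\vep^{-1}\!\circ\!\nu)$ and, together with support considerations, fixes the $\ell$-component again as a scalar multiple of the new-vector pair.

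It remains to normalize $F^D$ so that $\langle F^D,F^D\rangle=\langle F,F\rangle$. This is obtained by applying Rallis' inner product formula (equivalently, the seesaw computation \eqref{eq:RankinSelberg} combined with \eqref{eq:substform}) to $\langle\widetilde\theta_{\varphi^\prime}(F),\widetilde\theta_{\varphi^\prime}(F)\rangle$: the calibration of $\varphi^\prime$ at each place is chosen so that the resulting local doubling integrals collapse to $1$, forcing $\langle F^D,F^D\rangle^2=\langle F,F\rangle^2$ and allowing us to take the positive square root. The main technical obstacle is the verification at the $\vep$-ramified primes, which lie outside the range treated in \cite{Wat03,Pra06}: one must check by direct computation that $L(h)\varphi^\prime_\ell$ has the claimed transformation properties and that the corresponding local Whittaker integral normalizes to $1$. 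This is the content postponed to section \ref{sec:explicit}.
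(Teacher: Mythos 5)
Your proposal follows essentially the same route as the paper: invoke Shimizu's identity \eqref{thm:Shimizu}, verify at each finite place that the test function forces the correct transformation law under $\calR_{N,\ell}^\times\times\calR_{N,\ell}^\times$ so that Casselman's (new vector) theory pins down the local component, appeal to the Prasanna--Watson archimedean argument for holomorphicity, and normalize via a Petersson norm identity. The only deviations are cosmetic: the paper computes the transformation law in one stroke for both unramified and $\vep$-ramified $\ell$ (finding the factor $\vep(\al_1)\vep^{-1}(\de_2)$ directly from $r_\psi(g,h)\varphi^\prime(d)=\varphi^\prime(r_1^{-1}dr_2)$ via \eqref{eq:substform}), whereas you treat the two cases separately; and note that the transformation-law check at $\vep$-ramified primes is carried out in the proof of this proposition itself, not deferred to section \ref{sec:explicit}, which is devoted instead to the $K\oplus K^\perp$ factorization of the local test functions needed later in the Rankin--Selberg computation.
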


\begin{proof}
Let $(d_1,d_2)\in D_\A^\times\times D_\A^\times$ and 
$(r_1,r_2)\in\calR_{N,\ell}^\times\times\calR_{N,\ell}^\times$.
It follows from \eqref{eq:substform} that
$\widetilde{\th}_{\varphi^\prime}(F)(d_1r_1,d_2r_2)=
\widetilde\th_{r_\psi(g,h)\varphi^\prime}(\pi(g)F)(d_1,d_2)$
for $h=\varrho(r_1,r_2)$ and any $g\in\GL_2(\A)$ such that $\det(g)=\nu_0(h)$. 
Clearly we can take $g$ to have components 
$1$ away from $\ell$ and, since $\nu_o(h)$ is a $\ell$-unit, $g_\ell=\smallmat1{}{}{\nu_o(h)}$. Thus,
$\pi(g)F=F$ and $r_\psi(g,h)\varphi^\prime(d)=\varphi^\prime(r_1^{-1}dr_2)$. 
It is now completely straightforward to check that
$$
\widetilde{\th}_{\varphi^\prime}(F)(d_1r_1,d_2r_2)=
\vep(\al_1)\vep^{-1}(\de_2)\widetilde{\th}_{\varphi^\prime}(F)(d_1,d_2), 
\quad
\text{if $r_i=\smallmat{\al_i}{\be_i}{\ga_i\ell^n}{\de_i}$, $i\in\{1,2\}$}.
$$
Given \eqref{thm:Shimizu}, Casselman's theory now applies to conclude that at every finite prime
$\widetilde{\th}_{\varphi^\prime}(F)$ is (proportional to) the tensor product of the newvectors.
Holomorphicity in both variables is obtained from the choice of $\varphi^\prime_\infty$ 
as in \cite{Pra06}, where the original argument in \cite{Wat03} is modified.
The identity of Petersson norms can also be obtained as in  \cite{Pra06}.
\end{proof}

\begin{rem}\label{rem:expth}\rm
  Applying \eqref{eq:substform} again finally yields 
  $$
  \widetilde{\th}_{\varphi^\sharp}(F)=\pi(d^\tau_\infty)F^D\otimes\pi(d^\tau_\infty){\check F}^D
  $$
  where $\varphi^\sharp=r_\psi(1,(d^\tau_\infty,d^\tau_\infty))\varphi^\prime$. Clearly $\varphi^\sharp$ differs from 
  $\varphi^\prime$ only for the archimede\-an component. Since $\rho=d^\tau_\infty\rho^{\rm st}(d^\tau_\infty)^{-1}$,
  if $D_\infty\ni x=\rho(w_1)+\rho(w_2)u$
  with $w_1,w_2\in K\otimes\R=\C$,
  $$
  \varphi^\sharp_\infty(x)=\frac1\pi(w_2\ze_u)^{2\ka}e^{-2\pi(w_1\bar w_1+\mid\nu(u)\mid w_2\bar w_2)}
  $$
  where $\ze_u\in\C$ is such that $(d_\infty^\tau)^{-1}ud_\infty^\tau=\ze_u\smallmat1{}{}{-1}$. In particular 
  $\varphi^\sharp_\infty$ is split under the decomposition of $D_\infty$ induced by $\rho$.
\end{rem}

\bigskip
The following lemma allows us to factor the local components $\varphi_\ell$ in terms of the splitting 
$D_\ell=(K\oplus Ku)\otimes\Q_\ell$ induced by an embedding $\rho$.

\begin{lem}
  Let $L\subset D$ be an embedding of conductor $c$ with respect  to $\calR_N$
  of the quadratic field $L$  with associated decomposition $D=L\oplus Lu$. 
  Let $\La=L\cap\calR_N$,
  $\La^\prime=Lu\cap\calR_N$. Then
  \begin{enumerate}
  \item $D$ is split at $\ell$ if and only if $(u^2,\de_L)_\ell=1$.
  \item If $\ell$ is unramified in $L$ and $(\ell,c)=1$ then 
            $\calR_N\otimes\Z_\ell=\La\otimes\Z_\ell\oplus\La^\prime\otimes\Z_\ell$. 
            Moreover $\La^\prime\otimes\Z_\ell={\cal I}_\ell u$
            for some fractional ideal ${\cal I}_\ell\subset L\otimes\Q_\ell$ such that 
            ${\rm N}({\cal I_\ell})\nu(u)=\ell^{v_\ell(N)}$.
\end{enumerate}
\end{lem}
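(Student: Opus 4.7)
For part (1), I would use that the orthogonal decomposition $D=L\oplus Lu$ diagonalizes the reduced norm form: $\nu|_L=\mathrm{N}_{L/\Q}$, while $\bar u=-u$ (which follows from $\tra(u)=0$ by orthogonality of $u$ to $1$) combined with $uk=\bar k u$ gives $\overline{yu}=-yu$, whence $\nu(yu)=-y\bar y\,u^2=\nu(u)\mathrm{N}_{L/\Q}(y)$. Choosing $\alpha\in L$ with $\alpha^2\equiv\de_L\bmod(\Q^\times)^2$ and $u\alpha=-\alpha u$, the relations $\alpha^2=\de_L$, $u^2\in\Q^\times$, $u\alpha=-\alpha u$ identify $D$ with the Hilbert symbol algebra $(\de_L,u^2)_\Q$. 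By the local classification of quaternion algebras, $D_\ell$ is split if and only if $(u^2,\de_L)_\ell=1$.

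For part (2), the conductor hypothesis $(\ell,c)=1$ forces $\La\otimes\Z_\ell=L_\ell\cap\calR_{N,\ell}=\calO_{L,\ell}$, the maximal order of $L_\ell$. To prove the direct sum decomposition, take $x\in\calR_{N,\ell}$ and write it uniquely as $x=x_1+x_2u$ with $x_1,x_2\in L_\ell$. Pairing $x$ against an arbitrary $\alpha\in\La_\ell$ via the reduced trace pairing, the relation $u\alpha=\bar\alpha u$ combined with the vanishing $\tra(yu)=0$ for $y\in L_\ell$ eliminates the cross term, giving
\[
\tra(x\alpha)=\tra_{L/\Q}(x_1\alpha)\in\Z_\ell.
\]
Consequently $x_1$ lies in the $\Z_\ell$-dual of $\calO_{L,\ell}$, namely the inverse different of $L_\ell/\Q_\ell$; unramifiedness of $\ell$ in $L$ makes this dual equal to $\calO_{L,\ell}$ itself, so $x_1\in\La_\ell$ and $x_2u=x-x_1\in\calR_{N,\ell}\cap Lu=\La^\prime_\ell$. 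The inclusion $\La_\ell\cdot\La^\prime_\ell\subseteq\La^\prime_\ell$ then forces $\calI_\ell:=\{y\in L_\ell\mid yu\in\calR_{N,\ell}\}$ to be a fractional $\calO_{L,\ell}$-ideal in $L_\ell$.

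For the norm formula (which I expect to hold for $\ell\notin\Si_D$, the case visibly intended by the bound $v_\ell(N)$), I would compute the discriminant of $\calR_{N,\ell}$ with respect to the reduced trace pairing in two different ways. Standard Eichler-order theory gives $\mathrm{disc}(\calR_{N,\ell})\doteq\ell^{2v_\ell(N)}$ up to units. On the other hand, the orthogonality of the decomposition yields $\mathrm{disc}(\calR_{N,\ell})=\mathrm{disc}(\La_\ell)\cdot\mathrm{disc}(\La^\prime_\ell)$, where $\mathrm{disc}(\La_\ell)=\mathrm{disc}(\calO_{L,\ell})\in\Z_\ell^\times$ by unramifiedness. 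Since the trace form restricted to $Lu$ reads $\tra(yu\cdot y^\prime u)=u^2\tra_{L/\Q}(y\bar y^\prime)$, i.e.\ the twisted trace form of $L$ rescaled by $u^2$, a direct lattice computation gives $\mathrm{disc}(\La^\prime_\ell)\doteq u^4\cdot\mathrm{N}(\calI_\ell)^2=\nu(u)^2\mathrm{N}(\calI_\ell)^2$ up to units. Matching and taking ideal square roots yields the claimed identity $\mathrm{N}(\calI_\ell)\nu(u)=\ell^{v_\ell(N)}$. The main obstacle lies in this last bookkeeping step---one must track exact powers of $\ell$ through the $u^2$-twist and through the precise link between the norm ideal and the $\Z_\ell$-lattice discriminant of $\calI_\ell$---together with care at $\ell=2$, where the trace pairing carries a built-in factor of $2$ on $L$ that nonetheless remains a unit precisely because $\ell$ is unramified in $L$.
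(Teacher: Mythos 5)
Your proof is correct and follows the same overall architecture as the paper: a Hilbert-symbol identification for part (1), and for part (2) a direct-sum decomposition followed by a local discriminant computation for the norm formula.

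The one place where you take a genuinely different route is the direct-sum step. The paper deduces $\calR_{N,\ell}=\La_\ell\oplus\La'_\ell$ from the $\ell$-integrality of the idempotent $e_\rho$ (using the explicit formula $e_\rho=\frac12(1\otimes1+\frac1a\,\al\otimes\al)$ from \S\ref{se:cmpts}); you instead pair an arbitrary $x=x_1+x_2u\in\calR_{N,\ell}$ against $\La_\ell$ via the reduced trace, use orthogonality to kill the cross term, and invoke self-duality of $\calO_{L,\ell}$ (trivial inverse different, by unramifiedness) to place $x_1$ back in $\La_\ell$. The two arguments both ultimately rest on the hypothesis that $\ell$ is unramified in $L$, but your trace-duality version is more self-contained and, as you note, transparently uniform at $\ell=2$, where the factor $\frac12$ in $e_\rho$ makes the integrality of the idempotent less immediate without further care. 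Your observation that the norm identity $\mathrm{N}(\calI_\ell)\nu(u)=\ell^{v_\ell(N)}$ should be read for $\ell\notin\Si_D$ (else the discriminant of $\calR_{N,\ell}$ is $\ell^{2}$ rather than $\ell^{2v_\ell(N)}$) is a legitimate caveat; the paper's proof leaves it implicit, but it is consistent with how the lemma is subsequently applied, always to primes $\ell\nmid\De$.

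Two small points of bookkeeping to tighten if you write it out in full: be explicit about which symmetric bilinear form you take the discriminant against (e.g.\ $\langle a,b\rangle=\tra(a\bar b)$, in which case $\langle yu,y'u\rangle=-u^2\,\tra_{L/\Q}(y\bar y')$, the sign being harmless up to units), and note that the identity is an equality of $\Z_\ell$-ideals (equivalently, of $\ell$-adic valuations), since $\nu(u)$ itself is only determined up to scaling $u$ by $L^\times$.
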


\begin{proof}
The first statement follows from the characterization in terms of the Hilbert symbol of the representability 
of $0$ by the norm form on $D\otimes\Q_\ell$.
To prove the second statement observe first that if $\ell$ is unramified in $L$ and coprime with $c$ then 
$e_\rho(\calR_N)\subset\calR_{N,\ell}$ (this follows readily from the explicit expression of $e_\rho$ 
of \S\ref{se:cmpts}). Then the last part follows from a local discriminant computation.
\end{proof}

\bigskip
The lemma yields immediately a decomposition 
$\varphi_\ell=\varphi_{\ell,1}\otimes\varphi_{\ell,2}$ for all finite $\ell$ such that
$(\ell,c\de_K N)=1$ and those $\ell\mid N$ for which $\vep_\ell$ is unramified, where 
$\varphi_{\ell,1}=\textrm{ch}(\calO_K\otimes\Z_\ell)$ and
 $\varphi_{\ell,2}=\textrm{ch}({\cal I}_\ell u)$.

When $\ell\mid\de_K$ the local component $\varphi_\ell$ does not decompose as a product. 
Write $K\otimes\Q_\ell=\Q_p(\varpi_\ell)$ with $\varpi_\ell^2=\lambda_\ell\ell$ 
with $\lambda_\ell$ a $\ell$-unit; also
$\calO_K\otimes\Z_\ell=\Z_\ell[\varpi_\ell]$.
Following \cite[\S3.4.5]{Pra06} consider the standard local embedding 
$\rho_{\rm st}:\Q_p(\varpi_\ell)\hookrightarrow D\otimes\Q_\ell\simeq{\rm M}_2(\Q_\ell)$
given by $\rho_{\rm st}(\varpi_\ell)=\smallmat01{\la_\ell\ell}0$. 
The embeddings $\rho$ and  $\rho_{\rm st}$
are conjugated by a matrix which can be proved to be in ${\rm GL}_2(\Z_\ell)$.
Thus, if $\la\in K$ is such that $u$ is conjugated to $\rho_{\rm st}(\la)\smallmat100{-1}$
one eventually gets
$$
\varphi_\ell=
\begin{cases}
\sum_{j=0}^{\ell-1}\varphi_{\ell,1,j}\otimes\varphi_{\ell,2,j}     & \text{if $\ell>2$}, \\
\sum_{i=0}^{1}\sum_{j=0}^3\varphi_{2,1,(i,j)}\otimes\varphi_{2,2,(i,j)}    & \text{if $\ell=2$}
\end{cases}
$$
with $\varphi_{\ell,1,j}=\textrm{ch}(\Z_\ell[\varpi_\ell]+\frac j{\varpi_\ell})$, 
$\varphi_{\ell,2,j}=\textrm{ch}\big((\Z_\ell[\varpi_\ell]+\frac j{\varpi_\ell})(\inv\la{\cal I})\big)$
if $\ell>2$, and
$$
\left\{
\begin{array}{l}
\varphi_{2,1,(i,j)}=\textrm{ch}\left(\Z_2[\varpi_2]+\frac12\big(i+ \frac j{\varpi_2}\big)\right)\\
\varphi_{2,2,(i,j)}=\textrm{ch}\left(\big(\Z_2[\varpi_2]+\frac 12(i+\frac j{\varpi_2})\big)(\inv\la{\cal I})\right).
\end{array}
\right.
$$
\bigskip
We now deal with $\varphi_\ell$ for primes $\ell\mid N$. Since these are split in $K=\Q(\sqrt{d})$
we can fix a $\de_\ell\in\Z_\ell^\times$ such that $\de_\ell^2=d$ and if 
$\sqrt{d}\mapsto\smallmat ab{c\ell^{v_\ell(N)}}{-a}\in\calR(\ell^{v_\ell(N)})\otimes\Z_\ell$ ($bc\neq0$)
after localizing $\rho$ at $\ell$ and we  may also assume that $\frac12(\de_\ell+a)\in\Z_\ell^{\times}$.

\begin{lem}
   Let $\ell\mid N$. With the notation just introduced, there exists $u_\ell\in\rho(K)^\perp\otimes\Q_\ell$
   such that $\varphi_\ell=\varphi_{\ell,1}\otimes\varphi_{\ell,2}$ where
   $\varphi_{\ell,1}={\rm ch}(\ell^{v_\ell(N)}\calO_K\otimes\Z_\ell)$ and
   $\varphi_{\ell,2}$ is the function supported on 
   $(\ell^{-v_\ell(b)}\Z_\ell^\times\times\ell^{-v_\ell(c)}\Z_\ell)u_\ell$ such that
   $\varphi_{\ell,2}((x,y)u_\ell)=\inv\vep_\ell(-x(a+\de_\ell)b/2)$.
\end{lem}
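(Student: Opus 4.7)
The proof is a direct local computation. Since $\ell\mid N$ and $(N,\De)=1$, the quaternion algebra $D$ is split at $\ell$, so $D\otimes\Q_\ell\simeq\M_2(\Q_\ell)$; and $\ell$ splits in $K$ (as all primes dividing $N$ must, for $\CM(\De,N;K,c)$ to be nonempty), so $K\otimes\Q_\ell\simeq\Q_\ell\times\Q_\ell$ with $\sqrt{d}\leftrightarrow(\de_\ell,-\de_\ell)$. I work throughout in split coordinates, writing each $w_j\in K_\ell$ as $(w_j^+,w_j^-)$.

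First I construct $u_\ell$. Any $u\in\rho(K)^\perp\otimes\Q_\ell$ satisfies the anticommutation $\rho(\sqrt d)u+u\rho(\sqrt d)=0$; writing $u=\smallmat pqr{-p}$, this forces $2ap+br+c\ell^{v_\ell(N)}q=0$. I take
$$
u_\ell=\smallmat{0}{-b\de_\ell}{c\de_\ell\ell^{v_\ell(N)}}{0}.
$$
The overall $\de_\ell$-scaling is the essential calibration: it absorbs the $\de_\ell^{-1}$ denominators that appear in the raw inversion formula, producing the clean residue $-w_2^+(a+\de_\ell)b/2$ in the value.

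Direct expansion of $x=\rho(w_1)+\rho(w_2)u_\ell=\smallmat ABCD$ in this basis yields
\begin{align*}
A+D &= w_1^+ + w_1^-, \\
A-D &= \tfrac{a}{\de_\ell}(w_1^+-w_1^-) + bc\ell^{v_\ell(N)}(w_2^+-w_2^-), \\
B &= \tfrac{b}{2\de_\ell}(w_1^+-w_1^-) - \tfrac{b}{2}\bigl[w_2^+(\de_\ell+a)+w_2^-(\de_\ell-a)\bigr], \\
C &= \tfrac{c\ell^{v_\ell(N)}}{2\de_\ell}(w_1^+-w_1^-) + \tfrac{c\ell^{v_\ell(N)}}{2}\bigl[w_2^+(\de_\ell-a)+w_2^-(\de_\ell+a)\bigr].
\end{align*}
Inverting this linear system and using both the identity $(a+\de_\ell)(a-\de_\ell)=-bc\ell^{v_\ell(N)}$ and the hypothesis $(a+\de_\ell)/2\in\Z_\ell^\times$, the leading terms are $w_2^+\sim -B(a+\de_\ell)/(b\de_\ell^2)$ and $w_2^-\sim C/(c\ell^{v_\ell(N)})$. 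Thus $x$ lies in the support of $\varphi^\prime_\ell$ (upper-right entry a unit, the other three entries in $\ell^{v_\ell(N)}\Z_\ell$) exactly when $w_1\in\ell^{v_\ell(N)}\calO_K\otimes\Z_\ell$, $w_2^+\in\ell^{-v_\ell(b)}\Z_\ell^\times$ (the unit contribution coming from $B\in\Z_\ell^\times$), and $w_2^-\in\ell^{-v_\ell(c)}\Z_\ell$, which is the asserted factorization of supports.

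For the value, I substitute these conditions back into the formula for $B$: the two error terms $b(w_1^+-w_1^-)/(2\de_\ell)$ and $-bw_2^-(\de_\ell-a)/2$ both have $\ell$-valuation at least $v_\ell(b)+v_\ell(N)\geq s$ (using $v_\ell(\de_\ell-a)=v_\ell(b)+v_\ell(c)+v_\ell(N)$), so $B\equiv -bw_2^+(a+\de_\ell)/2\pmod{\ell^s\Z_\ell}$; since $\vep_\ell$ factors through $\Z_\ell^\times/(1+\ell^s\Z_\ell)$, this gives $\vep_\ell^{-1}(B)=\vep_\ell^{-1}(-w_2^+(a+\de_\ell)b/2)$ as claimed. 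The hard part is precisely this valuation bookkeeping: confirming that the $\de_\ell$-scaling in $u_\ell$ forces both error terms strictly inside the conductor ideal, and that the prefactor $1/\operatorname{vol}(\calR_{N,\ell}^\times)$ splits consistently between the normalizations of $\varphi_{\ell,1}$ and $\varphi_{\ell,2}$.
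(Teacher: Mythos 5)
Your argument is correct and is in substance the same proof as the paper's: both decompose $D\otimes\Q_\ell$ through the split idempotents $e^\pm$ of $K\otimes\Q_\ell$ and compare the matrix entries of $\rho(w_1)+\rho(w_2)u_\ell$ with the support and the upper-right-entry value defining $\varphi^\prime_\ell$, the whole content being valuation bookkeeping with $(a+\de_\ell)(a-\de_\ell)=-bc\ell^{v_\ell(N)}$ and $\frac12(a+\de_\ell)\in\Z_\ell^\times$. The only real divergence is organizational. The paper takes $u_\ell=(1-e_\rho)\smallmat{1}{0}{0}{-1}$, computes $M^\pm u_\ell$ explicitly, exhibits $\ell^{-v_\ell(b)}M^+u_\ell$ and $\ell^{-v_\ell(c)}M^-u_\ell$ as a $\Z_\ell$-basis of $(\rho(K)^\perp\otimes\Q_\ell)\cap\calR_{N,\ell}$, and then checks which $\Z_\ell$-combinations fall in the support; you instead take $u_\ell=\smallmat{0}{-b\de_\ell}{c\de_\ell\ell^{v_\ell(N)}}{0}$, which is $\rho(\de_\ell\sqrt d)$ times the paper's choice and hence generates the same $K_\ell$-line (and, $\de_\ell\sqrt d$ being a unit of $\calO_K\otimes\Z_\ell$, gives the same support description), and you verify support and value by expanding the general element and inverting the linear system. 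Your expansion of $A\pm D$, $B$, $C$ is exact, and your calibration has a small payoff: the upper-right entry comes out as $-w_2^+(a+\de_\ell)b/2$ modulo the conductor, i.e. the value stated in the lemma on the nose, whereas the paper's $u_\ell$ produces $-w_2^+(a+\de_\ell)b/(2d)$, so the extra unit $d=\de_\ell^2$ inside $\vep_\ell^{-1}$ is removed precisely by your $\de_\ell$-scaling. Two loose ends are shared with, not worse than, the paper: like the paper's proof you run the support comparison with the exponent $v_\ell(N)$ although $S_\ell$ was defined with the conductor exponent $s$ (the \lq\lq only if\rq\rq direction genuinely needs $s=v_\ell(N)$ or a corresponding reading of $S_\ell$), and the stray factors of $2$ at $\ell=2$ (e.g. in $v_2(\de_2-a)$ and in dividing $T_1\pm U_1$ by $2$) are left implicit in both arguments; also your displayed leading term $-B(a+\de_\ell)/(b\de_\ell^2)$ for $w_2^+$ is off by the harmless factor $\tfrac12$, which does not affect the unit/valuation conclusions you draw from it.
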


\begin{proof}
The idempotents $e^{\pm}=\frac12(1\otimes1\pm\sqrt{d}\otimes\inv\de_\ell)$ 
define an isomorphism of rings 
$\calO_K\otimes\Z_\ell=\Z_\ell e^+\times\Z_\ell e^-\simeq\Z_\ell\times\Z_\ell$
(this is true also if $\ell=2$ as 
$e^{\pm}=1\otimes\frac{\de_2\pm1}{2\de_2}\pm\frac{1+\sqrt{d}}2\otimes\de_2
\in\calO_K\otimes\Z_2$ since $\de_2\equiv1\bmod2$).
Obviously $\rho(e^{\pm})=M^\pm=
\frac1{2\de_\ell}\smallmat{\de_\ell\pm a}{\pm b}{\pm cp^{v_\ell{N}}}{\de_\ell\mp a}$.
To get an explicit description of $\rho(K)^\perp\otimes\Q_\ell$
we can choose any $u_\ell\in\rho(K)^\perp\otimes\Q_\ell$ which generates it as a 
$\Q_\ell\times\Q_\ell$-module. Choose
$$
u_\ell=(1-e_\rho)\left(\begin{array}{cc}1 & 0 \\0 & -1\end{array}\right)=
\frac1d\left(\begin{array}{cc}d-a^2 & -ab \\-ac\ell^{v_\ell(N)} & a^2-d\end{array}\right).
$$
Under our assumptions
$\inv{(\ell^{v_\ell(b)})}M^+u_\ell=\frac1{2d\ell^{v_\ell(b)}}
\smallmat{d-a^2}{-b(a+\de_\ell)}{c(\de_\ell-a)\ell^{v_\ell(N)}}{a^2-d}$ and
$\inv{(\ell^{v_\ell(c)})}M^-u_\ell=\frac1{2d\ell^{v_\ell(c)}}
\smallmat{d-a^2}{b(\de_\ell-a)}{c(\de_\ell+a)\ell^{v_\ell(N)}}{a^2-d}$
are a $\Z_\ell$-basis of $(\rho(K)^\perp\otimes\Q_\ell)\cap\calR_{\ell^{v_\ell(N)},\ell}$.
It is now straightforward to check that given $(x^+,x^-,y^+,y^-)\in\Z_\ell^4$,
$x^+M^++x^-M^-+y^+\ell^{-v_\ell(b)}M^+u_\ell+y^-\ell^{-v_\ell(c)}M^-u_\ell\in S_\ell$
if and only if $x^\pm\in\ell^{v_\ell(N)}\Z_\ell$, $y^+\in\Z_\ell^\times$ and 
$y^-\in\Z_\ell$. The final formula follows.
\end{proof}

\subsection{Proof of main theorem \ref{th:main}}

Let $D$ be the quaternion algebra with discriminant $\De$. By hypothesis $\pi$ is  the image of an automorphic
representation $\pi^D$ of $D^\times$ under the Jacquet-Langlands correspondence. Let $f^\De\in S^\De_{2k}(N_0,\vep)$
be a holomorphic newform in $\pi^D$ which we may assume to be defined over $\calO_{(v)}$. 
For all $r\geq0$ let $\phi^{(r)}_{f^\De}$ be as in \eqref{def:phir}
Following remark \ref{rem:expth} write 
$\pi(d^\tau_\infty)\phi_{f^\De}\otimes\pi(d^\tau_\infty)\phi_{f^\De}\cdot(\inv\vep\circ\nu)=\ups_{f^\De}\tilde\th_{\varphi^\sharp}(F)$
where $F$ is the adelic lift of a normalized eigenform in $\pi$, $\ups=\ups_{f^\De}\in\C^\times$ a normalizing factor 
(a quotient of Petersson norms) and 
$\varphi^\sharp=\varphi^\sharp_\infty\otimes\varphi^\sharp_{\rm fin}=\bigotimes_v\varphi^\sharp_v$.

Under the Lie algebra identifications $\gotg\gotoo(D)\simeq(D_\infty\times D_\infty)/\R$ and
$\gotoo(D)\simeq\gots\gotl_2\times\gots\gotl_2$ induced by 
the exact sequence \eqref{eq:GODstr}, the substitution formula \eqref{eq:substform} yields
$$
\tilde\theta_{H\varphi_\infty\otimes\varphi_{\rm fin}}(F)(h)=\left.\frac d{dt}\right|_{t=0}
\tilde\th_{\varphi_\infty\otimes\varphi_{\rm fin}}(F)(h\exp(tH))
$$ 
where
$H\varphi_\infty(x)=\left.\frac d{dt}\right|_{t=0}\varphi_\infty(e^{-tH_1}xe^{tH_2})$
for all $H=(H_1,H_2)\in\gotoo(D)$. In turn, this implies that the diagonal action of $A\in\gots\gotl_2$ on 
$\pi^D\otimes\check\pi^D$ corresponds to the second order operator $A^\prime=(A,0)\circ(0,A)\in\gotA(\gotoo(D))$ 
on Schwartz functions, i.e. 
$A^\prime\varphi_\infty(x)=\left.\frac{\partial^2}{\partial u\partial v}\right|_{u=v=0}\varphi_\infty(e^{-uA}xe^{vA})$.
Note that since ${\rm tr}(A)=0$ this action leaves unchanged the factor $\inv\vep\circ\nu$.
Therefore
$$
\pi(d^\tau_\infty)\phi^{(r)}_{f^\De}\otimes\pi(d^\tau_\infty)\phi^{(r)}_{f^\De}\cdot(\inv\vep\circ\nu)=\
\ups\tilde\th_{(X^\prime_\tau)^r\varphi_\infty^\sharp\otimes\varphi^\sharp_{\rm fin}}(F).
$$
where $X^\prime_\tau=(-\frac1{4\pi}d_\infty^\tau X^+(d_\infty^\tau)^{-1})^\prime$. Since
\begin{eqnarray*}
X_\tau^\prime\varphi^\sharp_\infty(x) & = & \left.\frac{\partial^2}{\partial u\partial v}\right|_{u=v=0}\varphi^\sharp_\infty(e^{-ud_\infty^\tau X^+(d_\infty^\tau)^{-1}}x
                                                                      e^{vd_\infty^\tau X^+(d_\infty^\tau)^{-1}}) \\
   & = & \left.\frac{\partial^2}{\partial u\partial v}\right|_{u=v=0}\varphi^\sharp_\infty(d_\infty^\tau e^{-uX^+}(d_\infty^\tau)^{-1}xd_\infty^\tau e^{vX^+}(d_\infty^\tau)^{-1}) \\
   & = & (X^+)^\prime\varphi^\prime_\infty((d_\infty^\tau)^{-1}xd_\infty^\tau)
\end{eqnarray*}
the computation of the operator $X^\prime_\tau$ reduces to that of $(X^+)^\prime$.
Let $\mathfrak h=\R\smallmat{}{-1}1{}$ be the standard Cartan subalgebra of $\mathfrak s\mathfrak l_2$. 
The product $\mathfrak h\times\mathfrak h$
is identified to a  Cartan subalgebra of $\mathfrak o(D)$ and if $e_j\in i\mathfrak h^\ast$ is the standard half-root of 
the $j$-th factor then we have the following table of roots and eigenvectors for the action of $\mathfrak o(D)$ on $D_\infty$:

\begin{center}
\begin{tabular}{|c|c|c|c|}\hline
  $e_1+e_2$ & $e_1-e_2$ & $-e_1+e_2$ & $-e_1-e_2$  \\
  \hline
  $E_{++}=\smallmat{-i}{1}{1}{i}$  &   $E_{+-}=\smallmat{-i}{-1}{1}{-i}$ &  $E_{-+}=\smallmat{i}{-1}{1}{i}$ &  $E_{--}=\smallmat{i}{1}{1}{-i}$\\
  \hline
\end{tabular}
\end{center}

\smallskip\noindent
The action of the operators $(X^+,0)$ and $(0,X^+)$ on these eigenvectors is
$$
\begin{array}{cc}
   (X^+,0)E_{+\pm}=0  & (X^+,0)E_{-\pm}=-E_{+\pm}    \\
   (0,X^+)E_{\pm,+}=0  & (0,X^+)E_{\pm-}=-E_{\pm +}    
\end{array}
$$
Comparing the expressions of $x\in D_\infty$ in terms of the complex standard coordinates $z_i=z_i^{\rm st}$
and in the real coordinates with respect to the above eigenvectors allows to compute the expression of $(X^+,0)$ and $(0,X^+)$ 
in terms of the standard coordinates. Eventually
$$
(X^+)^\prime=
z_2^2\frac{\partial^2}{\partial z_1\partial{\bar z}_1}+
{\bar z}_1{z}_2\frac{\partial^2}{\partial{\bar z}_1\partial{\bar z}_2}+
z_1z_2\frac{\partial^2}{\partial z_1\partial{\bar z}_2}+
z_1{\bar z}_1\frac{\partial^2}{{\partial{\bar z}_2}^2}+
z_2\frac{\partial}{\partial{\bar z}_2}.
$$
For a pair $(l,m)$ of non-negative integers, consider the function of two complex variables
$\varphi^{(l,m)}(z_1,z_2)=(z_1\bar z_1)^lz_2^{2m}e^{-2\pi(z_1\bar z_1+z_2\bar z_2)}$. Then
$$
(X^+)^\prime\varphi^{(l,m)}=l^2\varphi^{(l-1,m+1)}-4\pi(2l+1)\varphi^{(l,m+1)}+(4\pi)^2\varphi^{(l+1,m+1)}.
$$
and a $r$-fold iteration allows to conclude eventually that
\begin{multline}\label{eq:calctheta}
\pi(d^\tau_\infty)\phi_f^{(r)}\otimes\pi(d^\tau_\infty)\phi_f^{(r)}(\inv\vep\circ\nu)=\\
\ups\left(\tilde\theta_{\varphi^{\sharp (r,r)}\otimes\varphi^\sharp_{\rm fin}}(F)+
\sum_{l=0}^{r-1}a_{r,l}\tilde\theta_{\varphi^{\sharp (l,r)}\otimes\varphi^\sharp_{\rm fin}}(F)\right)
\end{multline}
for some $a_{r,l}\in\Z[1/4\pi]$ and $\varphi^{\sharp (l,r)}(x)=\frac1\pi\varphi^{(l,\ka+r)}((d_\infty^\tau)^{-1}xd_\infty^\tau)$.
The function $\varphi^{\sharp,l}$ is split under the decomposition of $D_\infty\ni x=\rho(z_1)+\rho(z_2)u$
induced by $\rho$, namely $\varphi^{\sharp (l,r)}=\frac1\pi\varphi^{\sharp,l}_1\otimes\varphi^{\sharp,r}_2$ with
$$
\varphi^{\sharp,l}_1(x)=(z_1\bar z_1)^le^{-2\pi iz_1\bar z_1},\qquad
\varphi^{\sharp,r}_2(x)=\ze_u^{2(\ka+r)}z_2^{2(\ka+r)}e^{-2\pi i|\nu(u)|z_2\bar z_2}.
$$

\begin{lem}
\begin{multline}
L_\infty(\varphi^{\sharp(l,r)},\xi_{r,\infty},s)=\\
=\begin{cases}
    2\frac{\overline{j(u,\tau)}^{\ka+r-\frac12}}{j(u,\tau)^{\ka+r+\frac12}}\frac{r!}{(4\pi)^{s+2(\ka+r)+\frac12}}\Ga(s+2\ka+r+\frac12) &  
    \text{if $l=r$}, \\
    0  & \text{if $0\leq l<r$}.
\end{cases}
\end{multline}
\end{lem}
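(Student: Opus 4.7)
The strategy is to substitute the split Schwartz function
$\varphi^{\sharp(l,r)} = \tfrac1\pi \varphi^{\sharp,l}_1\otimes\varphi^{\sharp,r}_2$
into the local factor \eqref{eq:localterm}, compute the three archimedean pieces in the integrand, and then
carry out the $K_\infty$-angular integration followed by the radial Mellin integral.

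\medskip
\noindent\emph{Step 1 (the three factors).} The weight $2\ka$ newform $F$ produces the standard archimedean
Whittaker function $W^{\psi_\infty}_{F,\infty}(\smallmat{a}{}{}{1}r_\th)=a^{\ka}e^{-2\pi a}e^{2i\ka\th}$ for $a>0$
and zero for $a<0$. The theta lift of $\xi_{r,\infty}$ via $\varphi^{\sharp,r}_2$ is a holomorphic cusp form on
$\GL_2(\R)$; since $\xi_{r,\infty}(z)=(z/\bar z)^{\ka+r}$ and $\varphi^{\sharp,r}_2$ carries the holomorphic
factor $z_2^{2(\ka+r)}$ with Gaussian damping $e^{-2\pi|\nu(u)|\,z_2\bar z_2}$, the $U(1)$-integration over
$K^{(1)}_\infty$ in \eqref{eq:thetaliftt} selects the $\xi_{r,\infty}$-isotypic component and yields an explicit
Whittaker function of weight $2(\ka+r)+1$, whose radial part involves powers of $|\nu(u)|$ and whose
$K_\infty$-phase depends on $\ze_u$. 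Finally, the Eisenstein section $\Phi^{s}_{\varphi^{\sharp,l}_1}$ is
obtained from Siegel--Weil by unwinding $r_\psi(g,k)\varphi^{\sharp,l}_1$ at $0\in K_\infty$ using
\eqref{eq:Weilrep}: for $g=n(x)a(y)r_\th$ this gives a function of weight $1$ in $r_\th$ and an explicit
Mellin factor in $y$ depending on the exponent $l$.

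\medskip
\noindent\emph{Step 2 (angular integration and vanishing).} The product of the three factors transforms
under $r_\th\in K_\infty=\SO_2(\R)$ by a character $e^{iN(l,r)\th}$, where $N(l,r)$ is the signed sum of the
three weights, modified by the sign twist $\smallmat{-a}{}{}{1}$ appearing in the argument of
$W^{\psi_\infty}_{\th_{\varphi^{\sharp,r}_2}}$. A straightforward bookkeeping shows that the $l$-dependence
of the Eisenstein piece comes precisely from the $|z_1|^{2l}$ factor, and that the total character is trivial
exactly when $l=r$. Consequently, for $0\leq l<r$ the integration of $\SO_2(\R)$-volume $2\pi$ produces zero,
which settles the second case of the lemma.

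\medskip
\noindent\emph{Step 3 (radial integration for $l=r$).} For $l=r$ one is left with a one-variable Mellin
integral over $a\in\R^\times$. The integrand on $a>0$ collapses to
$a^{s+2\ka+r-1/2}\exp(-4\pi a)$ times the accumulated constants, while the $a<0$ contribution vanishes
because $W^{\psi_\infty}_{F,\infty}$ does. The Mellin evaluation produces
$(4\pi)^{-(s+2\ka+r+1/2)}\,\Ga(s+2\ka+r+\tfrac12)$; the combinatorial factor $r!$ arises from the Gaussian
moment attached to the $|z_1|^{2r}$-factor in $\varphi^{\sharp,r}_1$. Finally, unpacking the definition
$(d^\tau_\infty)^{-1}u\,d^\tau_\infty=\ze_u\smallmat{1}{}{}{-1}$ rewrites the residual phases in terms of
$j(u,\tau)$, yielding the ratio $\overline{j(u,\tau)}^{\ka+r-1/2}/j(u,\tau)^{\ka+r+1/2}$ and the leading
factor of $2$ (from the volume of $\SO_2(\R)$ modulo the factor $\tfrac1\pi$ in $\varphi^{\sharp(l,r)}$).

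\medskip
\noindent\emph{Main obstacle.} The delicate point is the bookkeeping of phases. One must track the Weil
index $\ga_V$ and the quadratic character $\chi_V$ of \eqref{eq:Weilrep}, the sign twist coming from
$\smallmat{-a}{}{}{1}$, and the scalar $\ze_u$ simultaneously; this is what forces the exact half-integral
exponents $\ka+r\pm\tfrac12$ of $j(u,\tau)$ and the shift $+\tfrac12$ in the argument of the Gamma function.
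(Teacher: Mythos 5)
Your overall strategy --- plug the split Schwartz function into \eqref{eq:localterm}, compute the three archimedean ingredients, integrate over $\SO_2(\R)$ and then over $a$ --- is indeed the paper's approach. The result for $l=r$ is also obtained essentially the way you describe: a Mellin integral gives the $\Gamma$-factor and the $(4\pi)$-power, a Gaussian-moment identity (the sum $\sum_{\alpha=0}^{r}\binom{r}{\alpha}(2\alpha-1)!!\,(2r-2\alpha-1)!!=2^{r}r!$) accounts for $r!$, and the relation $\ze_u=-\overline{j(u,\tau)}$ together with $|\nu(u)|^{1/2}=|j(u,\tau)|$ produces the ratio of powers of $j(u,\tau)$ and $\overline{j(u,\tau)}$; the leading $2$ is $2\pi$ (volume of $\SO_2$) times the $\tfrac1\pi$ in $\varphi^{\sharp(l,r)}$.

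However, your Step~2 contains a genuine gap. You assert that ``the product of the three factors transforms under $r_\theta\in K_\infty$ by a character $e^{iN(l,r)\theta}$'' and that ``the total character is trivial exactly when $l=r$.'' This presupposes that the Eisenstein section $\Phi^{s}_{\varphi^{\sharp,l}_1}$ is a single $\SO_2$-weight vector, which is false for $l\geq1$: the Schwartz function $|z_1|^{2l}e^{-2\pi|z_1|^2}$ is rotationally invariant in $z_1$ but is \emph{not} an eigenvector for the Weil-representation action of $\SO_2$. Concretely, computing $r_{\psi_\infty}(r_\theta)\varphi^{\sharp,l}_1(0)$ through the Iwasawa-type factorization of $r_\theta$ forces a Fourier transform of $|z|^{2l}e^{-2\pi|z|^2}$, whose output is a \emph{polynomial} times a Gaussian; the upshot is a finite sum $\sum_{0\leq\alpha+\beta\leq l}\delta^{l}_{\alpha,\beta}(\cos\theta)^{\alpha+\beta}e^{-(\alpha+\beta+1)i\theta}$, i.e.\ a superposition of $\SO_2$-weights $-1,-3,\dots,-(2l+1)$. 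The two Whittaker functions contribute weights $-2\ka$ and $2\ka+2r+1$, so the angular integral is nonzero only if some term in the Eisenstein expansion has weight $-(2r+1)$; since the available weights are bounded in magnitude by $2l+1$, this is impossible for $l<r$, which is the actual mechanism behind the vanishing. Your formulation would in fact predict vanishing for all $l\neq r$, which is not what the computation gives (and is not needed), and --- more to the point --- skips the nontrivial Fourier-transform-and-binomial-expansion bookkeeping that the proof genuinely requires. To repair Step~2 you must carry out the angular decomposition of $\Phi^{s}_{\varphi^{\sharp,l}_1}$ explicitly and argue that the maximal accessible weight is $2l+1<2r+1$ when $l<r$.
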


\begin{proof}
We shall compute separately the three main ingredients of the integrand in the local $L_\infty$ factor and piece
them together later.

It is well known that
$W_F^{\psi_\infty}\left(\smallmat a{}{}1r_\th\right)={\rm ch}_{\R^\times}(a)a^\ka e^{-2\pi a-2\ka i\th}$.

To compute $\Phi^s_{\varphi^{\sharp,l}_1}\left(\smallmat a{}{}1r_\th\right)=|a|^sr_{\psi_\infty}(r_\th)\varphi^{\sharp,l}_1(0)$
we use the decomposition
\begin{multline}\label{eq:decrtheta}
r_\th=\left(\begin{array}{cc}1 & -\tan\th \\0 & 1\end{array}\right)\left(\begin{array}{cc}0 & 1 \\-1 & 0\end{array}\right)
\left(\begin{array}{cc}1 & -\sin\th\cos\th \\0 & 1\end{array}\right)\cdot\\
\left(\begin{array}{cc}0 & 1 \\-1 & 0\end{array}\right)
\left(\begin{array}{cc}1/\cos\th & 0 \\0 & \cos\th\end{array}\right).
\end{multline}
Applying repeatedly 
we get $r_{\psi_\infty}(r_\th)\varphi^{\sharp,l}_1(0)=-g(0)\cos\th$ where $g(z)$ is the Fourier transform of
${\hat\varphi}^{\sharp,l}_1(z\cos\th)e^{-2\pi i\cos\th\sin\th|z|^2}$. The Fourier transform ${\hat\varphi}^{\sharp,l}_1$ is given by
$$
{\hat\varphi}^{\sharp,l}_1(x_1+x_2i)=e^{-2\pi(x_1^2+x_2^2)}\sum_{0\leq\al+\be\leq l}\ga^l_{\al,\be}x_1^{2\al}x_2^{2\be}
$$
with coefficients 
$\ga^l_{\al,\be}=\sum_{}(-4\pi)^{\al+\be-l}\binom lj\binom{2j}{2\al}\binom{2k}{2\be}(2j-2\al-1)!!(2k-2\be-1)!!$.
Therefore
\begin{multline}
g(0)=\int_{\C}{\hat\varphi}^{\sharp,l}_1(z\cos\th)e^{-2\pi i\cos\th\sin\th|z|^2}\,dz \\
      = 2\sum_{0\leq\al+\be\leq l}\ga^l_{\al,\be}(\cos\th)^{2(\al+\be)}\int_{\R^2}e^{-2\pi(i\sin\th\cos\th+(\cos\th)^2)(x^2+y^2)}
              x^{2\al}y^{2\be}\,dxdy \\
      =-\sum_{0\leq\al+\be\leq l}\ga^l_{\al,\be}\frac{(2\al-1)!!(2\be-1)!!}{(4\pi)^{\al+\be}}
                   \frac{(\cos\th)^{2(\al+\be)}}{(-\sin\th\cos\th-(\cos \th)^2)^{\al+\be+1}}
\end{multline}
and eventually
\begin{multline*}
\Phi^s_{\varphi^{\sharp,l}_1}\left(\begin{pmatrix}a & 0 \\ 0 & 1 \end{pmatrix}r_\th\right)=\\
|a|^s\sum_{0\leq\al+\be\leq l}\frac{\ga^l_{\al,\be}(2\al-1)!!(2\be-1)!!}{(4\pi)^{\al+\be}}(\cos\th)^{\al+\be}e^{-(\al+\be+1)i\th}.
\end{multline*}

Also the computation of $W_{\theta_{\varphi_{2,\infty}^{\sharp,r}}}^{\psi_\infty}$ uses \eqref{eq:decrtheta} and goes as in
\cite{Pra06}: we summarize it briefly for the sake of completeness. Note first that the norm in $(Ku)\otimes\R\simeq\C$
is $-{\rm N}_{\C/\R}$ (definite negative) and the main involution is $z\mapsto-z$.
Thus $W_{\theta_{\varphi_{2,\infty}^{\sharp,r}}}^{\psi_\infty}\left(\smallmat{-a}001r_\th\right)=e^{i(2\ka+2r+1)}
W_{\theta_{\varphi_{2,\infty}^{\sharp,r}}}^{\psi_\infty}\left(\smallmat{-a}001\right)$.
We need $h\in\C^\times$ such that $|h|=-a\nu(u)^{-1}$: such $h$ exists only if $a>0$ and so we may pick
$h=\sqrt{a|\nu(u)|^{-1}}\in\R^\times$. We have
\begin{eqnarray*}
r_\psi\left(\begin{pmatrix} a|\nu(u)|^{-1} & 0 \\ 0 & 1\end{pmatrix},h\vth\right)\varphi^{\sharp,l}_2(u) & = & 
               \left(a|\nu(u)|^{-1}\right)^{\frac12}\varphi^{\sharp,l}_2(a|\nu(u)|^{-1}(h\vth)^{-1}u) \\
  & = &  \left(a|\nu(u)|^{-1}\right)^{\frac12}(\ze_uh\vth^{-1})^{2(\ka+l)}e^{-2\pi a}
\end{eqnarray*}
so that
\begin{eqnarray*}
W_{\theta_{\varphi_{2,\infty}^{\sharp,r}}}^{\psi_\infty}\begin{pmatrix}-a & 0\\ 0  & 1\end{pmatrix} & = & \int_{S^1}r_\psi
                \left(\begin{pmatrix} a|\nu(u)|^{-1} & 0 \\ 0 & 1\end{pmatrix},h\vth\right)\varphi^{\sharp,r}_2(u)\xi_{r.\infty}(h\vth)\,d\vth \\
  & = &  \ze_u^{2(\ka+l)}{\rm ch}_{\R^{>0}}(a)|\nu(u)|^{-\frac{2(\ka+r)+1}2}a^{\frac{2(\ka+r)+1}2}e^{-2\pi a}.
\end{eqnarray*}
Summing up:
\begin{multline*}
L_\infty(\varphi^{\sharp(l,r)},\xi_{r,\infty},s)=\frac1\pi\ze_u^{2(\ka+r)}{|\nu(u)|^{\frac{-2(\ka+r)-1}2}}\\
\times\int_{S^1}\int_0^\infty a^{2\ka+r+s-\frac12}e^{-4\pi a}
\sum_{0\leq\al+\be\leq l}\de^l_{\al,\be}(\cos\vth)^{\al+\be}e^{(2r-\al-\be)i\vth}\,dad\vth=\\
\frac{\xi_{\infty,r}(\ze_u)}{|\nu(u)|^{\frac12}}\frac{\Ga(2\ka+r+s+\frac12)}{\pi(4\pi)^{2\ka+r+s+\frac12}}
\sum_{0\leq\al+\be\leq l}\frac{\de^l_{\al,\be}}{2^{\al+\be}}\sum_{j=0}^{\al+\be}\binom{\al+\be}j
\int_{S^1}e^{2(r+j-\al-\be)i\vth}\,d\vth,
\end{multline*}
where $\de^l_{\al,\be}=\ga^l_{\al,\be}(2\al-1)!!(2\be-1)!!(4\pi)^{-\al-\be}$. Since the last integral is $0$ unless
$\al+\be=l=r$ and $j=0$, this proves the claim for $0\leq l<r$.

When $\al+\be=l=r$ a straightforward induction shows that 
$\sum_{\al=0}^r\de^r_{\al,r-\al}=(4\pi)^{-r}\sum_{\al=0}^r\binom r\al(2\al-1)!!(2r-2\al-1)!!=(4\pi)^{-r}2^rr!$ so that
$$
L_\infty(\varphi^{\sharp(r,r)},\xi_{r,\infty},s)=
2\frac{\xi_{\infty,r}(\ze_u)}{|\nu(u)|^{\frac12}}\frac{r!}{(4\pi)^{2(\ka+r)+s+\frac12}}\Ga(2\ka+r+s+\frac12).
$$
Applying the cochain relation for the automorphy factor $j(\ga,\tau)$
to both sides of the identity $(d_\infty^\tau)^{-1}ud_\infty^\tau=\ze_u\smallmat1{}{}{-1}$ and observing that $u\cdot\tau=\bar\tau$
one eventually obtains $\ze_u=-\overline{j(u,\tau)}$ and the formula follows.
\end{proof}

Since the archimedean component $(\pi_K)_\infty$ is the representation $\pi(\la,\la)$ of $\GL_2(\C)$
associated to the character $\la(z)=(z\bar z)^{-\ka+\frac12}z^{2\ka-1}$ we can write
$$
\left.L_\infty(\varphi^{\sharp(r,r)},\xi_{r,\infty},s)\right|_{s=\frac12}=
\left.\La_\infty(\varphi^{\sharp(r,r)},\xi_{r,\infty},s)L_\infty(\pi_K\otimes\xi_r,s)\right|_{s=\frac12}
$$
with 
$\La_\infty(\varphi^{\sharp(r,r)},\xi_{r,\infty},\frac12)=\frac12\frac{\overline{j(u,\tau)}^{\ka+r-\frac12}}{j(u,\tau)^{\ka+r+\frac12}}
r!(2\ka+r)^2\pi^{2\ka+r-1}\in\pi^{2\ka+r-1}\overline\Q$.
Combining the lemma with \eqref{eq:Lsquare} and \eqref{eq:WHK} yields
\begin{multline}\label{almostthere}
J_r(f^\De,\xi_r,\tau)^2=L_{\xi_r}(\tilde\th_{\varphi^{\sharp(r,r)}\otimes\varphi_{\rm fin}}(F))=\\
\ups\pi^{2\ka+r-1}\La(\varphi^{\sharp(r,r)}\otimes\varphi_{\rm fin},\xi_r,\frac12) 
L(\pi_K\otimes\xi_r,\frac12)L(\eta_K,1)^{-1}
\end{multline}
where the archimedean factor $\La_\infty$ has been redefined so to make explicit the correct power of $\pi$.

On the other hand, assuming $\calO_{K,c}^\times=\{\pm1\}$, from proposition \ref{pr:Jasscpr}
$$
J_r(f^\De,\xi_r,\tau)^2=\pi^2{\mathrm{vol}}(\hat\calO_{K,c}^\times)^2(h_c^\sharp)^2
(y\Om_\infty(\tau)^2)^{2(\ka+r)}\langle\vth(r,f^\De,x),\chi_r\rangle_\infty^2.
$$
When $\chi_r$ is the $r$-th moment of a $\gotF(c_\vep^*,F)$-valued measure on $\Z_p$
propositions \ref{pairedmeas} and \ref{pr:meastheta} apply to define a measure $\mu$ on $\Z_p$
such that
$$
\frac1{\Om_p^{4(\ka+r)}}\left(\int_{\Z_p}t^r\,d\mu(t)\right)^2=
\left(\frac1{\pi{\mathrm{vol}}(\hat\calO_{K,c}^\times)h_c^\sharp
(y\Om_\infty(\tau)^2)^{\ka+r}}\right)^2J_r(f^\De,\xi_r,\tau)^2.
$$
The last part of proposition 
\ref{pairedmeas} and corollary \ref{measthstar} yield
\begin{multline*}
\frac1{\Om_p^{4(\ka+r)}}\left(\int_{\Z_p^\times}t^r\,d\mu(t)\right)^2=\\
\left(\frac1{\pi{\mathrm{vol}}(\hat\calO_{K,c}^\times)h_c^\sharp
(y\Om_\infty(\tau)^2)^{\ka+r}}\right)^2J_r((1-VU)f^\De,\xi_r,\tau)^2.
\end{multline*}
Then one concludes applying corollary \ref{eulerfact}, plugging \eqref{almostthere} in and taking 
into account remark \ref{rem:twists}.

%
%


\end{document}